\DeclareFontFamily{U}{rsfs}{%
\skewchar\font127}
\DeclareFontShape{U}{rsfs}{m}{n}{%
<-6>rsfs5<6-8.5>rsfs7<8.5->rsfs10}{}
\DeclareSymbolFont{rsfs}{U}{rsfs}{m}{n}
\DeclareRobustCommand*\rsfs{%
\@fontswitch\relax\mathrsfs}
\DeclareFontFamily{U}{rsfs}{%
\skewchar\font127}
\DeclareFontShape{U}{rsfs}{m}{n}{%
<-6>rsfs5<6-8.5>rsfs7<8.5->rsfs10}{}
\DeclareSymbolFont{rsfs}{U}{rsfs}{m}{n}
\DeclareRobustCommand*\rsfs{%
\@fontswitch\relax\mathrsfs}
\theoremstyle{plain}
\newtheorem{theorem}{Theorem}
\newtheorem{thm}{Theorem}[section]
\newtheorem{prop}[thm]{Proposition}
\newtheorem{lem}[thm]{Lemma}
\newtheorem{defi}[thm]{Definition}
\newtheorem{rmk}[thm]{Remark}
\newtheorem{cor}[thm]{Corollary}
\newtheorem*{prop*}{Proposition}
\newtheorem*{notn}{Notation}
\newtheorem*{conv}{Convention}
\newtheorem{prop-defi}[thm]{Proposition-Definition}
\newtheorem{thm-defi}[thm]{Theorem-Definition}
\newtheorem{lem-defi}[thm]{Lemma-Definition}
\newcommand{\C}{\mathbb{C}}
\newcommand{\ZZ}{\mathbb{Z}}
\newcommand{\Z}{\mathcal{Z}}
\newcommand{\I}{\mathcal{I}}
\renewcommand{\div}{\operatorname{div}}
\renewcommand{\det}{\operatorname{det}}
\newcommand{\pts}{\operatorname{pts}}
\newcommand{\pic}{\operatorname{Pic}}
\renewcommand{\O}{\mathcal{O}}
\newcommand{\n}{{\boldsymbol{n}}}
\newcommand{\betab}{{\boldsymbol{\beta}}}
\renewcommand{\S}[1]{S^{[#1]}}
\newcommand{\z}[1]{\Z^{[#1]}}
\renewcommand{\i}[1]{\I^{[#1]}}
\newcommand{\id}{\operatorname{id}}
\newcommand{\pr}{\operatorname{pr}}
\newcommand{\LL}{\mathbb{L}^{\bullet}}
\newcommand{\Fb}{F^{\bullet}}
\newcommand{\Ab}{A^{\bullet}}
\newcommand{\Eb}{E^{\bullet}}
\newcommand{\Fbv}{F^{\bullet \vee}}
\newcommand{\dR}{\mathbf{R}}
\newcommand{\dL}{\mathbf{L}}
\renewcommand{\hom}{\mathcal{H}om}
\newcommand{\ext}{\mathcal{E}xt}
\newcommand{\Hom}{\operatorname{Hom}}
\newcommand{\Ext}{\operatorname{Ext}}
\newcommand{\cone}{\operatorname{Cone}}
\newcommand{\Coh}{\operatorname{Coh}}
\newcommand{\tr}{\operatorname{tr}}
\newcommand{\q}{\operatorname{q}}
\newcommand{\coker}{\operatorname{coker}}
\newcommand{\rank}{\operatorname{Rank}}
\newcommand{\mM}{\mathcal{M}}
\newcommand{\mMb}{\mathcal{M}^{\mathcal{L}}}
\newcommand{\mMw}{\mathcal{M}^{\omega_S}}
\newcommand{\cE}{\mathcal{E}}
\newcommand{\eE}{\mathbb{E}}
\newcommand{\eEb}{\overline{\mathbb{E}}}
\newcommand{\ch}{\operatorname{ch}}
\newcommand{\DT}{\operatorname{DT}}
\newcommand{\VW}{\operatorname{VW}}
\newcommand{\vir}{\operatorname{vir}}
\renewcommand{\v}[1]{V^{[#1]}}
\renewcommand{\c}{\mathsf{c}}
\newcommand{\sP}{\mathsf{P}}
\newcommand{\sQ}{\mathsf{Q}}
\newcommand{\sA}{\mathsf{A}}
\newcommand{\F}{\mathcal{F}}
\renewcommand{\t}{\mathbf{t}}
\newcommand{\s}{\mathbf{s}}
\newcommand{\J}{\mathbb{J}}
\newcommand{\G}{\mathcal{G}}
\newcommand{\fix}{\operatorname{fix}}
\newcommand{\mov}{\operatorname{mov}}
\newcommand{\Ebf}{E^{\bullet, \fix}}
\newcommand{\Ebm}{E^{\bullet,\mov}}
\renewcommand{\d}{\operatorname{d}}
\newcommand{\T}{\mathcal{T}}
\newcommand{\N}{\mathcal{N}}
\newcommand{\D}{\mathcal{D}}
\newcommand{\red}{\operatorname{red}}
\newcommand{\ob}{\operatorname{ob}}
\newcommand{\cP}{\mathcal{P}}
\newcommand{\sN}{\mathsf{N}}
\newcommand{\sG}{\mathsf{G}}
\newcommand{\sK}{\mathsf{K}}
\newcommand{\sT}{\mathsf{T}}
\newcommand{\sE}{\mathsf{E}}
\newcommand{\cL}{\mathcal{L}}
\newcommand{\kk}[1]{\sK^{[#1]}}
\newcommand{\mm}{\mathfrak{m}}
\newcommand{\nn}{\mathfrak{n}}
\newcommand{\jj}{\mathfrak{J}}
\newcommand{\bp}{\overline{p}}
\title[Localized DT theory]{Localized Donaldson-Thomas theory of surfaces}
\author{Amin Gholampour and Artan Sheshmani and Shing-Tung Yau}
\begin{document}

\maketitle

\begin{abstract}
Let $S$ be a projective simply connected complex surface and $\cL$ be a line bundle on $S$. We study the moduli space of stable compactly supported 2-dimensional sheaves on the total spaces of $\cL$. The moduli space admits a $\C^*$-action induced by scaling the fibers of $\cL$. We identify certain components of the fixed locus of the moduli space with the moduli space of torsion free sheaves and the nested Hilbert schemes on $S$. We define the localized Donaldson-Thomas invariants of $\cL$ by virtual localization in the case that $\cL$ twisted by the anti-canonical bundle of $S$ admits a nonzero global section. When $p_g(S)>0$, in combination with Mochizuki's formulas, we are able to express the localized DT invariants in terms of the invariants of the nested Hilbert schemes defined by the authors in \cite{GSY17a}, the Seiberg-Witten invariants of $S$, and the integrals over the products of Hilbert schemes of points on $S$.
When $\cL$ is the canonical bundle of $S$, the Vafa-Witten invariants defined recently by Tanaka-Thomas, can be extracted from these localized DT invariants. VW invariants are expected to have modular properties as predicted by S-duality.

\end{abstract}

\setcounter{tocdepth}{3}
\tableofcontents

\section{Introduction} 
\subsection{Overview}
The Donaldson-Thomas invariants of 2-dimensional sheaves in projective nonsingular (Calabi-Yau) threefolds are expected to have modular properties through S-duality considerations (\cite{DM11,VW94,  GS13, GST14}). These invariants are very difficult to compute in general due to lack of control over the singularity of  surfaces supporting these sheaves. To make the situation more manageable, we consider the total space of a line bundle $\cL$ over a fixed nonsingular projective surface $S$. We then study the moduli space of $h$-stable 2-dimensional compactly supported sheaves $\cE$ such that $c_1(\cE)=r[S]$, where $[S]$ is the class of the 0-section and $h=c_1(\O_S(1))$.

To define DT invariants of $\cL$ we have to overcome two main obstacles: \begin{enumerate}[1.]
\item Construct a perfect obstruction theory over the moduli space, which contains no trivial factor in its obstruction sheaf\footnote{Otherwise, the DT invariants would vanish.},
\item If $H^0(\cL)\neq 0$ then the moduli space is not compact and hence one cannot expect to get a well-defined virtual fundamental class from 1. 
\end{enumerate}

For 1, we do not allow strictly semistable sheaves in the moduli space, and we assume that the line bundle $\cL\otimes \omega_S^{-1}$ admits a nonzero global section, where $\omega_S$ is the canonical bundle of $S$. The latter condition guarantees that higher obstruction spaces of stable sheaves under consideration either vanish (if $\cL\neq \omega_S$) or can be ignored (if $\cL=\omega_S$), and in any case \cite{T98, HT10} provide the moduli space with a natural perfect obstruction theory. We assume that $H^1(\O_S)=0$, and then construct a reduced perfect obstruction theory out of the natural one by removing from its obstruction sheaf a trivial factor of rank $p_g(S)$.

For 2, we consider the $\C^*$-action on the moduli space induced by scaling the fibers of $\cL$. The fixed set of the moduli space is compact and the fixed part of the reduced perfect obstruction theory above leads to a reduced virtual fundamental class over this fixed set \cite{GP99}. We define two types of Donaldson-Thomas invariants by integrating against this class.
The study of these invariants completely boils down to understanding the fixed set of the moduli space and also the fixed and moving parts of the reduced perfect obstruction theory.
By restricting to the fixed set of the moduli space, we have much more control over the possible singularities of the supporting surfaces: the only singularities that can occur are the  thickenings of the zero section along the fibers of $\cL$.

\subsection{Main results} \label{mainresults}
We fix some symbols and notation before expressing the results. Let $S$ be a nonsingular projective  surface with $H^1(\O_S)=0$ and let $h=c_1(\O_S(1))$. Let  $\q:\cL\to S$ be a line bundle on $S$ so that $H^0(\cL\otimes \omega_S^{-1})\neq 0$. Let $$v=(r,\gamma,m) \in \oplus_{i=0}^2 H^{2i}(S,\mathbb{Q})$$ be a Chern character vector with $r\ge 1$,  and $\mMb_h(v)$ be the moduli space of compactly supported 2-dimensional stable sheaves $\cE$ on $\cL$ such that $\ch(\q_*\cE)=v$. Here stability is defined by means of the slope of $\q_*\cE$ with respect to the polarization $h$, and we assume $\gcd(r,\gamma\cdot h)=1$.  

The $\C^*$-fixed locus $\mMb_h(v)^{\C^*}$ consists of sheaves supported on $S$ (the zero section of $\cL$) and its thickenings. As discussed above, we show that  $\mMb_h(v)^{\C^*}$ carries a reduced virtual fundamental class denoted by $[\mMb_h(v)^{\C^*}]^{\vir}_{\red}$ (Theorem \ref{prop:red}).
In this paper we study two types of DT invariants

\begin{align*}
\DT^{\cL}_h(v;\alpha)&=\int_{[\mMb_h(v)^{\C^*}]^{\vir}_{\red}} \frac{\alpha}{e(\text{Nor}^{\vir})}\in \mathbb{Q}[\s,\s^{-1}]\quad \quad \alpha \in H^*_{\C^*}(\mMb_h(v)^{\C^*},\mathbb{Q})_\s,\\
\DT^{\cL}_h(v)&=\chi^{\vir}(\mMb_h(v)^{\C^*})\in \ZZ,
\end{align*} where $\text{Nor}^{\vir}$ is the virtual normal bundle of $\mMb_h(v)^{\C^*}\subset \mMb_h(v)$, $\chi^{\vir}(-)$ is the virtual Euler characteristic \cite{FG10}, and $\s$ is the equivariant parameter. 

If $\cL=\omega_S$ and $\alpha=1$ then $$\DT^{\omega_S}_h(v;1)=\s^{-p_g}\VW_h(v),$$ where $\VW_h(-)$ is the Vafa-Witten invariant defined by Tanaka and Thomas \cite{TT} and are expected to have modular properties (see Section \ref{vafawitten}).

We write $\mMb_h(v)^{\C^*}$ as a disjoint union of several types of components, where each type is indexed by a partition of $r$. 
Out of these component types, there are two types that are in particular important for us. One of them (we call it type I) is identified with $\mM_h(v)$, the moduli space of rank $r$ torsion free stable sheaves on $S$. The other type (we call it type II) can be identified with the nested Hilbert scheme $\S{\n}_\betab$ for a suitable choice of nonnegative integers $\n:=n_1,\dots, n_r$ and effective curve classes $\betab:=\beta_1,\dots, \beta_{r-1}$ in $S$.  Here $\S{\n}_\betab$ is the nested Hilbert scheme on $S$ parameterizing tuples $$(Z_1,Z_2,\dots,Z_r),\quad (C_1,\dots,C_{r-1})$$ where $Z_i\subset S$ is a 0-dimensional subscheme of length $n_i$, and $C_i\subset S$ is an effective divisor with $[C_i]=\beta_i$, and for any $i<r$ \begin{equation} \label{inclusion} I_{Z_{i}}(-C_i)\subset I_{Z_{i+1}}.\end{equation}
If $\beta_1=\cdots=\beta_{r-1}=0$, then $\S{\n}:=\S{\n}_{\betab=0}$ is the nested Hilbert scheme of points on $S$.
The authors have constructed a perfect obstruction theory over $\S{\n}_{\betab}$ in \cite{GSY17a} by studying the deformation/obstruction of the natural inclusions \eqref{inclusion}.
As a result $\S{\n}_{\betab}$ is equipped with a virtual fundamental class denoted by $[\S{\n}_\betab]^{\vir}$. This allows us to define new invariants  for $S$ recovering in particular Poincar\'e invariants of \cite{DKO07}, and (after reduction) stable pair invariants of \cite{KT14}.

The following Theorems are proven in Propositions \ref{(r)}, \ref{threefoldtwofold} and \ref{fixedpart}:
\begin{theorem}
The restriction of $[\mMb_h(v)^{\C^*}]^{\vir}_{\red}$ to the type I component $\mM_h(v)$ is identified with $[\mM_h(v)]^{\vir}_0$ induced by the natural trace free perfect obstruction theory over $\mM_h(v)$.
\end{theorem}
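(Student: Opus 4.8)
The plan is to compare the two perfect obstruction theories fiberwise. On the type I component the fixed sheaf $\cE$ is scheme-theoretically supported on the zero section $S\subset\cL$, so $\cE=\iota_*\F$ for a rank $r$ torsion free stable sheaf $\F$ on $S$ with $\ch(\F)=v$, where $\iota:S\hookrightarrow\cL$ is the inclusion. First I would unwind the reduced perfect obstruction theory of $\mMb_h(v)$ — built from the natural perfect obstruction theory of \cite{T98,HT10} on the moduli of sheaves on the threefold $\cL$, minus the trivial rank $p_g$ factor — and take its $\C^*$-fixed part along this component. The key computational input is the relation between $\Ext$ groups on $\cL$ and on $S$: since $\iota$ is a regular embedding with normal bundle $\cL$, the spectral sequence (or the exact triangle $\iota^*\iota_*\F\to\F\to\F\otimes\cL[?]$) gives a decomposition of $\Ext^i_{\cL}(\iota_*\F,\iota_*\F)$ into weight spaces under the fiber-scaling $\C^*$-action, the weight-zero parts being built from $\Ext^i_S(\F,\F)$ and $\Ext^i_S(\F,\F\otimes\cL)$.

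Next I would identify the fixed part. The weight-zero piece of the deformation-obstruction complex $\dR\hom_{\cL}(\iota_*\F,\iota_*\F)$ should come out to be $\dR\hom_S(\F,\F)$ (possibly shifted/truncated and with traces removed), which is exactly the complex governing the trace-free perfect obstruction theory on $\mM_h(v)$ used in Mochizuki's work — the one inducing $[\mM_h(v)]^{\vir}_0$. Here I must be careful about two bookkeeping points: (i) where the reduction by $p_g$ lands, i.e. checking that removing the trivial $H^0(\omega_S)$-factor on the threefold side matches removing the $H^2(\O_S)$-summand that distinguishes the "$0$" (trace-free, or reduced) obstruction theory on $\mM_h(v)$ from the full one; and (ii) the twist by $\cL\otimes\omega_S^{-1}$, whose chosen section is what kills (or renders irrelevant) the higher obstruction spaces — I would invoke Serre duality on $S$, $\Ext^2_S(\F,\F)\cong\Ext^0_S(\F,\F\otimes\omega_S)^\vee$, to see that the problematic factor is precisely the trivial one and that its removal is compatible on both sides.

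Then I would check the comparison is an isomorphism of perfect obstruction theories, not merely of the underlying two-term complexes: the obstruction-theory map to the cotangent complex of $\mMb_h(v)^{\C^*}$ restricted to $\mM_h(v)$ must agree with the Atiyah-class map defining $[\mM_h(v)]^{\vir}_0$. This follows because both are induced by the (truncated) Atiyah class of the universal sheaf, and $\iota_*$ of the universal sheaf on $S\times\mM_h(v)$ is the universal sheaf on the threefold restricted to this component; functoriality of Atiyah classes under $\iota_*$ does the rest. Finally, equality of the virtual classes follows from \cite{GP99}: the fixed part of the reduced obstruction theory is a perfect obstruction theory on the fixed locus, and since on the type I component it has been identified with the trace-free theory on $\mM_h(v)$, the induced classes coincide.

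The main obstacle I anticipate is the careful homological bookkeeping in the first two steps — tracking weights, shifts, traces, and the $p_g$-reduction simultaneously through the $\Ext$-decomposition, and in particular verifying that the trivial factor removed in forming $[\mMb_h(v)^{\C^*}]^{\vir}_{\red}$ restricts on this component to exactly the factor removed in passing from the full to the trace-free (reduced) obstruction theory on $\mM_h(v)$, with no residual discrepancy.
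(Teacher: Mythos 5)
Your proposal follows essentially the same route as the paper's proof: both hinge on the exact triangle $\eE\otimes\cL^{-1}\cdot\t^{-1}[1]\to \dL z^*z_*\eE\to\eE$ of \cite[Cor.~11.4]{H06}, whose adjoint splits $\dR\hom(z_*\eE,z_*\eE)$ into the weight-zero piece $z_*\dR\hom(\eE,\eE)$ and the weight-one piece $z_*\dR\hom(\eE,\eE\otimes\cL\cdot\t)[-1]$, so that taking trace-free parts and then the $\C^*$-fixed part identifies $\Ebf_{\red}$ restricted to this component with the trace-free obstruction theory $\big(\dR\hom_p(\eE,\eE)_0[1]\big)^\vee$ on $\mM_h(v)$. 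The only quibble is a slip of phrasing in your first paragraph, where $\Ext^i_S(\F,\F\otimes\cL)$ is listed among the weight-zero contributions even though it carries $\t$-weight one and hence lives entirely in the moving (virtual normal) part --- as your second paragraph in fact correctly acknowledges.
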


\begin{theorem}

The restriction of $[\mMb_h(v)^{\C^*}]^{\vir}_{\red}$ to a type II component $\S{\n}_\betab$ is identified with  $[\S{\n}_\betab]^{\vir}$ constructed in \cite{GSY17a}.
\end{theorem}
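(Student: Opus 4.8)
The plan is to make the identification of the type II component $\S{\n}_\betab$ with its slice of the $\C^*$-fixed locus completely explicit at the level of sheaves, and then compare the two perfect obstruction theories term by term. Over $\q:\cL\to S$, a $\C^*$-fixed stable sheaf $\cE$ supported on thickenings of the zero section decomposes, after pushing forward, as $\q_*\cE=\bigoplus_{i=1}^r F_i$, where the $\C^*$-weight-$i$ piece $F_i$ is a rank-one torsion free sheaf on $S$ twisted by a curve class; the scaling action on the fiber coordinate of $\cL$ gives maps $F_i\to F_{i+1}$ (multiplication by the tautological section), and the data of these maps is precisely a point of $\S{\n}_\betab$ via the inclusions \eqref{inclusion}. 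This much is the content of the component identification asserted just before the theorem (Propositions \ref{(r)}, \ref{threefoldtwofold}); I would take it as given and concentrate on the obstruction theories.

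Next I would write down the fixed part of the reduced perfect obstruction theory of $\mMb_h(v)$ restricted to $\S{\n}_\betab$. Starting from the natural obstruction theory of \cite{T98,HT10} governed by $\mathbf{R}\hom(\IIb,\IIb)_0$ on the threefold total space of $\cL$, I would push everything down to $S$ using $\q$, decompose into $\C^*$-weight spaces, and extract the weight-zero summand; the reduction step removes a trivial rank-$p_g$ factor, which on the fixed locus lands in the moving part or is accounted for by Serre duality on $S$ and $H^1(\O_S)=0$. The outcome should be a complex on $\S{\n}_\betab$ built out of the Hom-complexes between the $F_i$'s and the twisted inclusion maps — exactly the deformation–obstruction complex of the inclusions \eqref{inclusion} that \cite{GSY17a} uses to build $[\S{\n}_\betab]^{\vir}$. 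I would then exhibit a morphism between the two obstruction theories (ideally an isomorphism in the derived category, or at least a quasi-isomorphism after the appropriate truncation) compatible with the maps to the respective cotangent complexes, and invoke the standard fact that a quasi-isomorphism of perfect obstruction theories over the same base induces equality of the virtual classes.

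The main obstacle I expect is the bookkeeping in the weight decomposition: the complex $\mathbf{R}\hom(\IIb,\IIb)_0$ on the threefold, once pushed to $S$ and split by weight, produces many cross terms $\mathbf{R}\hom_S(F_i,F_j)$, and I must check that after accounting for the twist by $\omega_S$ (coming from relative Serre duality for $\q$, since $\cL$ is the relative dualizing-type twist) and the trace-free condition, the weight-zero part collapses to precisely the two-term complex of \cite{GSY17a} with no spurious summands — in particular that the terms responsible for deforming the curves $C_i$ and the points $Z_i$ match on both sides, and that the reduction exactly kills the $H^2(\O_S)=\C^{p_g}$ obstruction that would otherwise appear. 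A secondary subtlety is the case $\cL=\omega_S$, where higher obstructions are only "ignored" rather than zero; here I would need the argument of the earlier sections showing that the extra piece is fixed-part-trivial or cancels, so that the fixed-part comparison is unaffected. Once the term-by-term match is in place, compatibility with the cotangent complexes is formal, since both obstruction theories are constructed from the same Atiyah-class/truncation machinery applied to the universal inclusions.
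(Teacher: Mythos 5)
Your overall strategy --- take the component identification as given, split the threefold obstruction theory by $\C^*$-weight, and match the weight-zero part with the complex of \cite{GSY17a} --- is exactly the strategy of the paper's Proposition \ref{fixedpart}. However, two points in your outline are not mere bookkeeping to be deferred; one is a misstatement that would derail the comparison, and the other is where the actual content of the proof lies.

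First, the trivial rank-$p_g$ factor removed by the reduction does \emph{not} land in the moving part: it is the weight-zero piece $H^2(\O_S)\otimes \O$ inside the \emph{fixed} part of the obstruction sheaf (the trace map $\tr\circ\q_*$ takes values in $\O_{S\times \mMb_h(v)}$, which has weight zero, so the reduction only modifies the fixed part --- this is Remark \ref{redob}). If your alternative were correct, the unreduced fixed obstruction theory would already produce the right class, which fails for $p_g>0$ since its obstruction sheaf has a trivial rank-$p_g$ quotient. The correct mechanism is that the removed factor matches the trace-free truncation $[-]_0$ in the source of the complex $\Fbv$ of Theorem \ref{abs-r}, together with $H^1(\O_S)=0$.

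Second, the ``bookkeeping'' you postpone is the proof, and the tool needed is not relative Serre duality for $\q$ (nor is the relevant twist $\omega_S$; it is $\cL\cdot\t$, the equivariant normal bundle of the zero section). The key input is the exact triangle
$z_*\,\dR \hom_{S}(\F, \G)\to \dR \hom_{X}(z_*\F, z_*\G) \to z_*\,\dR \hom_{S}(\F, \G\otimes \cL\cdot\t)[-1]$
obtained by adjunction from the Koszul-type triangle $\F\otimes\cL^{-1}\cdot\t^{-1}[1]\to \dL z^*z_*\F\to\F$ of \cite[Cor.~11.4]{H06}. Moreover $\eEb|_{\T\times X}$ is an iterated \emph{extension} of the $z_*\J_i\cdot\t^{-i}$, not their direct sum, so one must run the diagram chase through the filtration \eqref{univeses} (the paper's diagram \eqref{corners}) and check, in particular, that the induced map $z_*\dR\hom(\J_1,\J_1)[-1]\to z_*\dR\hom(\J_2,\J_2)$ vanishes; only then does the fixed part collapse to the cone defining $\Fbv$, with all remaining cross terms (and the $\Ext^3(\cE,\cE)=\C$ when $\cL=\omega_S$) carrying nonzero weight and hence contributing only to the moving part. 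Without these two inputs the claimed collapse ``with no spurious summands'' is an assertion, not an argument.
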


When $r=2$ then types I and II components are the only possibilities. This leads us to the following result (Propositions \ref{virdec}, \ref{(r)}, \ref{fixedpart}):

\begin{theorem} \label{thm4.5}
Suppose that $v=(2,\gamma,m)$. Then, 
\begin{align*} 
\DT^{\cL}_h(v;\alpha)&=\DT^{\cL}_h(v;\alpha)_{{\rm{I}}}+\sum_{n_1,n_2,\beta}\DT^{\cL}_h(v;\alpha)_{{\rm{II}},\S{n_1,n_2}_\beta},\\
\DT^{\cL}_h(v)&=\chi^{\vir}(\mM_h(v))+\sum_{n_1,n_2,\beta}\chi^{\vir}(\S{n_1,n_2}_{\beta}]),\end{align*} 
where the sum is over all $n_1, n_2, \beta$ (depending on $v$ as in Definition \ref{compatible}) for which $\S{n_1,n_2}_\beta$ is a type II component of  $\mMb_h(v)^{\C^*}$, and the indices I and II indicate the contributions of type I and II components to the invariant $\DT^{\cL}_h(v;\alpha)$.
\end{theorem}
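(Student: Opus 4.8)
The plan is to reduce Theorem~\ref{thm4.5} to the three ingredients already in hand: the decomposition of the $\C^*$-fixed locus into component types, the fact that for $r=2$ only types I and II occur, and the two preceding theorems identifying the restricted reduced virtual class on each type of component. First I would invoke the promised decomposition (Proposition~\ref{virdec}) to write
\[
\mMb_h(v)^{\C^*}=\bigsqcup_{\lambda\vdash 2}\big(\text{components of type }\lambda\big),
\]
and observe that since $r=2$ the only partitions of $2$ are $(2)$ and $(1,1)$, corresponding respectively to the single type~I component $\mM_h(v)$ (the locus of sheaves scheme-theoretically supported on the zero section) and the type~II components, which by the earlier discussion are exactly the nested Hilbert schemes $\S{n_1,n_2}_\beta$ as $(n_1,n_2,\beta)$ ranges over the tuples compatible with $v$ in the sense of Definition~\ref{compatible}. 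This uses Proposition~\ref{(r)} and Proposition~\ref{fixedpart} to pin down which partitions can actually occur and to match type~II components with nested Hilbert schemes.

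Next I would use the behavior of the reduced virtual class and the virtual normal bundle under the disjoint-union decomposition: both $[\mMb_h(v)^{\C^*}]^{\vir}_{\red}$ and $e(\mathrm{Nor}^{\vir})$ restrict componentwise, so the integral defining $\DT^{\cL}_h(v;\alpha)$ splits as a sum of the integrals over $\mM_h(v)$ and over each $\S{n_1,n_2}_\beta$; by definition these summands are what we call $\DT^{\cL}_h(v;\alpha)_{\mathrm I}$ and $\DT^{\cL}_h(v;\alpha)_{\mathrm{II},\S{n_1,n_2}_\beta}$. This is essentially bookkeeping once one knows that the fixed locus is a disjoint union of \emph{open and closed} pieces and that virtual localization is additive over connected components; there is nothing to prove beyond citing the componentwise nature of the reduced obstruction theory established in Proposition~\ref{fixedpart}.

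For the second identity, the argument is parallel but cleaner: the virtual Euler characteristic $\chi^{\vir}$ of \cite{FG10} is additive over disjoint unions of schemes equipped with (reduced) perfect obstruction theories, so $\chi^{\vir}(\mMb_h(v)^{\C^*})=\chi^{\vir}(\mM_h(v))+\sum \chi^{\vir}(\S{n_1,n_2}_\beta)$, where on the type~I piece the relevant obstruction theory is the trace-free one (by the first preceding theorem) and on the type~II pieces it is the one of \cite{GSY17a} (by the second). No equivariant parameter enters because $\DT^{\cL}_h(v)$ is defined as a plain virtual Euler characteristic of the fixed locus.

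The main obstacle is not in this final assembly but upstream: one must be sure that for $r=2$ the list of component \emph{types} is genuinely exhausted by I and II — i.e.\ that no further exotic fixed-point strata (e.g.\ mixed configurations, or thickenings not captured by a single nested chain) appear — and that the compatibility conditions on $(n_1,n_2,\beta)$ in Definition~\ref{compatible} correctly and completely match the numerical invariants $v=(2,\gamma,m)$ under pushforward $\q_*$. Granting Propositions~\ref{virdec}, \ref{(r)}, and~\ref{fixedpart}, the theorem follows formally; the content has been moved into those statements, and what remains here is to string them together.
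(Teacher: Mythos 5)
Your proposal is correct and follows essentially the same route as the paper, which derives this theorem directly by combining Proposition \ref{virdec} (the decomposition of $\mMb_h(v)^{\C^*}$ into the $(2)$ and $(1^2)$ pieces for $r=2$) with Propositions \ref{(r)}, \ref{threefoldtwofold} and \ref{fixedpart} identifying the components and their restricted obstruction theories, then splitting the localization integral and the virtual Euler characteristic over connected components. The only minor imprecision is attributing the identification of type II components with nested Hilbert schemes to Proposition \ref{fixedpart} rather than Proposition \ref{threefoldtwofold}, and your closing caveat about exhaustiveness is exactly what Proposition \ref{threefoldtwofold} settles.
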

 
 The invariants $\chi^{\vir}(\S{n_1,n_2}_{\beta}])$ and $\DT^{\cL}_h(v;\alpha)_{{\rm{II}},\S{n_1,n_2}_\beta}$ (for a suitable choice of class $\alpha$ e.g. $\alpha=1$) appearing in Theorem \ref{thm4.5} are special types of the invariants $$\sN_S(n_1,n_2,\beta;-)$$ that we have defined in \cite{GSY17a} by integrating against $[\S{n_1,n_2}_\beta]^{\vir}$ (Definition \ref{invs} and Corollary \ref{movfix}). One advantage of this viewpoint is that it enables us to apply some of the techniques that we developed in [ibid]  to evaluate these invariants in certain cases.
 
Mochizuki in \cite{M02} expresses certain integrals against the virtual cycle of $\mM_h(v)$ in terms of Seiberg-Witten invariants and integrals $\sA(\gamma_1, \gamma_2, v;-)$ over the product of Hilbert scheme of points on $S$ (see Section \ref{Moch}). Using this result we are able to find the following expression for our DT invariants (Corollaries \ref{movfix}, \ref{cor:CI}, \ref{nocurves} and Proposition \ref{DT-nested}):

\begin{theorem} \label{thm5} 
Suppose that  $p_g(S)>0$, and $v=(2,\gamma,m)$ is such that $\gamma\cdot h >2K_S \cdot h$, $\gamma\cdot h$ is an odd number, and $\chi(v) :=\int_S v \cdot td_S \ge 1$. Then,
\begin{align*}\label{Moc:2}
\DT^{\cL}_h(v;1)
=&
-\sum_{\begin{subarray}{c}
\gamma_1 + \gamma_2 =\gamma \\
\gamma_1\cdot h < \gamma_2 \cdot h
\end{subarray}}
\mathrm{SW}(\gamma_1) \cdot 2^{2-\chi(v)} \cdot \sA(\gamma_1, \gamma_2, v;\sP_1)+\sum_{n_1,n_2,\beta} \sN_S(n_1,n_2,\beta;\cP_1).\\
\DT^{\cL}_h(v)
=&
-\sum_{\begin{subarray}{c}
\gamma_1 + \gamma_2 =\gamma \\
\gamma_1\cdot h < \gamma_2 \cdot h
\end{subarray}}
\mathrm{SW}(\gamma_1) \cdot 2^{2-\chi(v)} \cdot \sA(\gamma_1, \gamma_2, v;\sP_2)+\sum_{n_1,n_2,\beta} \sN_S(n_1,n_2,\beta;\cP_2).
\end{align*}
Here $\mathrm{SW}(-)$ is the Seiberg-Witten invariant of $S$, $\sP_i$ and $\cP_i$ are certain universally defined (independent of $S$) explicit integrands (see Proposition \ref{DT-nested}), and the second sums in the formulas are over all $n_1, n_2, \beta$ (depending on $v$ as in Definition \ref{compatible}) for which $\S{n_1,n_2}_\beta$ is a type II component of  $\mMb_h(v)^{\C^*}$.

Moreover, if $\cL=\omega_S$ and $S$ is isomorphic to a  $K3$ surface or one of the five types of very general  complete intersections $$(5)\subset \mathbb{P}^3, \; (3,3)\subset \mathbb{P}^4,\; (4,2)\subset \mathbb{P}^4, \; (3,2,2)\subset \mathbb{P}^5,\; (2,2,2,2)\subset \mathbb{P}^6,$$ the DT invariants $\DT^{\omega_S}_h(v;1)$ and $\DT^{\omega_S}_h(v)$ can be completely expressed as the sum of integrals over the product of the Hilbert schemes of points on $S$.

\end{theorem}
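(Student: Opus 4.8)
The plan is to establish the two displayed identities first and then obtain the ``Moreover'' clause by specialization. The backbone is the component decomposition of Theorem \ref{thm4.5}, which for $v=(2,\gamma,m)$ writes each of $\DT^{\cL}_h(v;1)$ and $\DT^{\cL}_h(v)$ as a type I term carried by $\mM_h(v)$ plus a sum of type II terms carried by the nested Hilbert schemes $\S{n_1,n_2}_\beta$. By the two identification theorems stated above, the reduced virtual class restricts on the type I stratum to the trace-free class $[\mM_h(v)]^{\vir}_0$ and on each type II stratum to $[\S{n_1,n_2}_\beta]^{\vir}$; integrals against the latter are, by definition, the invariants $\sN_S(n_1,n_2,\beta;-)$ of \cite{GSY17a}. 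Thus the whole computation reduces to evaluating tautological integrals against $[\mM_h(v)]^{\vir}_0$ and packaging the type II terms as $\sN_S$'s.

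First I would treat the type I contribution using Mochizuki's theorem (Section \ref{Moch}, \cite{M02}). The hypotheses are tailored to make this input legitimate: $\gamma\cdot h$ odd gives $\gcd(2,\gamma\cdot h)=1$, ruling out strictly semistable sheaves on $\mM_h(v)$; $\gamma\cdot h>2K_S\cdot h$ is the positivity bound in Mochizuki's rank-two formula; and $p_g(S)>0$ together with $\chi(v)\ge 1$ are what make the Seiberg-Witten side and the universal prefactor $2^{2-\chi(v)}$ meaningful. Applied to the relevant tautological integrands, Mochizuki's formula converts the type I integral into exactly the first sum in each display, ranging over $\gamma_1+\gamma_2=\gamma$ with $\gamma_1\cdot h<\gamma_2\cdot h$, weighted by $\mathrm{SW}(\gamma_1)$ and by the integrals $\sA(\gamma_1,\gamma_2,v;-)$ over products of Hilbert schemes of points.

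The technical heart, and the step I expect to be the main obstacle, is to identify the universal integrands $\sP_i$ and $\cP_i$ explicitly and to verify that they are independent of $S$. Here the two invariants feed in different characteristic data: $\DT^{\cL}_h(v;1)$ carries the equivariant residue factor $1/e(\text{Nor}^{\vir})$, built from the moving part of the reduced obstruction theory of Theorem \ref{prop:red}, while $\DT^{\cL}_h(v)$ carries the Fantechi-G\"ottsche virtual Euler characteristic integrand \cite{FG10}, built from the fixed (reduced) obstruction theory of each component. I would compute both componentwise from the universal sheaves, as set up in Proposition \ref{DT-nested}, check that on the type I stratum they assemble into tautological classes $\sP_1,\sP_2$ and on the type II stratum into tautological classes $\cP_1,\cP_2$ expressed only through Chern classes of tautological bundles, and deduce universality. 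Substituting $\sP_i$ into Mochizuki's formula and $\cP_i$ into the definition of $\sN_S$ then produces the two asserted identities.

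For the ``Moreover'' clause with $\cL=\omega_S$ I would exploit two features of the six surfaces. First, all are simply connected with $p_g>0$; being very general they have Picard group generated by $h$ (by Noether-Lefschetz for the complete intersections, by genericity for the $K3$), with $K_S=0$ for the $K3$ and $K_S=h$ for each of the five complete intersections. The compatibility constraints of Definition \ref{compatible} then restrict the curve classes to finitely many effective multiples of $h$---in particular to $\beta=0$ alone for the $K3$, since $K_S=0$---and in every case the resulting $\sN_S(n_1,n_2,\beta;-)$ are reduced to integrals over products of Hilbert schemes of points by Corollaries \ref{movfix}, \ref{cor:CI} and \ref{nocurves}. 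Second, the Seiberg-Witten invariants of these surfaces are explicitly known: for the minimal general-type complete intersections they are supported at $\gamma_1=0$ and $\gamma_1=K_S$ with value $\pm1$, and for the $K3$ they collapse at $K_S=0$. Substituting these explicit values turns the first (Seiberg-Witten) sum into a finite explicit combination of the $\sA$-integrals, which themselves live on products of Hilbert schemes of points. Combined with the type II reduction, this expresses $\DT^{\omega_S}_h(v;1)$ and $\DT^{\omega_S}_h(v)$ entirely as sums of integrals over products of Hilbert schemes of points, as claimed.
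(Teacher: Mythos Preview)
Your proposal is correct and follows essentially the same route as the paper: split via Theorem~\ref{thm4.5} into type~I and type~II, feed the type~I integral (rewritten via Corollary~\ref{int(r)} and Grothendieck--Riemann--Roch/K\"unneth as a polynomial $\sP_i$ in slant products) into Mochizuki's Proposition~\ref{thm:Moc}, and package each type~II term via Proposition~\ref{fixedpart} and Corollary~\ref{movfix} as an $\sN_S$-invariant. Two small corrections to your ``Moreover'' argument are worth making. First, for the $K3$ case with $\cL=\omega_S=\O_S$ the type~II locus is actually \emph{empty}, not merely restricted to $\beta=0$: the compatibility constraints $l_1>l_2$ (stability) and $l_1-l_2\le\ell=0$ are contradictory, as recorded in Corollary~\ref{cor:CI}(1). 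Second, you do not need to know the Seiberg--Witten invariants explicitly to get the conclusion: the $\sA$-integrals are \emph{by definition} integrals over $\S{n_1}\times\S{n_2}$, so the first sum is already of the required shape with $\mathrm{SW}(\gamma_1)$ as a numerical coefficient; the only work is to reduce the second (type~II) sum, which for the five complete intersections is handled by Corollary~\ref{cor:CI}(2) together with Theorem~\ref{product} (as packaged in Corollary~\ref{nocurves}), and for the $K3$ is vacuous.
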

In Theorem \ref{thm5}, we can always replace a given vector $v$ by another vector (without changing the DT invariants in the right hand side of formulas), for which the condition in theorem is satisfied (see Remark \ref{replacev}). 



\section*{Aknowledgement}
We would like to sincerely thank Yokinubu Toda for sharing his ideas with us regarding the relation of  DT theory of local surfaces with the nested Hilbert schemes and also to Mochizuki's work. We are grateful to Richard Thomas for explaining his recent work with Yuuji Tanaka \cite{TT} and providing us with valuable comments. We would like to thank Martijn Kool for pointing out a mistake in the summation in the definition of $\sA$ on page 25 in the first draft of this paper. We would also like to thank Davesh Maulik, Hiraku Nakajima, Takur\={o} Mochizuki, Alexey Bondal and Mikhail Kapranov for useful discussions. 

A. G. was partially supported by NSF grant DMS-1406788.  A. S. was partially supported by NSF DMS-1607871, NSF DMS-1306313 and Laboratory of Mirror Symmetry NRU HSE, RF Government grant, ag. No 14.641.31.0001. The second author would like to further sincerely thank the Center for Mathematical Sciences and Applications at Harvard University, the center for Quantum Geometry of Moduli Spaces at Aarhus University, and the Laboratory of Mirror Symmetry in Higher School of Economics, Russian federation, for the great help and support. S.-T. Y. was partially supported by NSF DMS-0804454,
NSF PHY-1306313, and Simons 38558.

\begin{conv} If $f:X\to Y$ is a morphism of schemes over $\C$ and $Z$ is any other $\C$-scheme, we usually use the same symbol $f$ to denote the morphism $$f\times \id: X\times Z\to Y\times Z.$$ Moreover, if $\F$ is a coherent sheaf on $Y$, when it is clear from the context, we simply write $\F$ to denote its pullback $f^* \F$ to $X$.  
\end{conv}

\section{Local reduced Donaldson-Thomas Invariants} \label{sec:threefold}
Let $(S, h)$ be a pair of a nonsingular projective surface $S$ with $H^1(\O_S)=0$, and $h:=c_1(\O_S(1))$, and let $$v:=(r,\gamma,m)\in H^{\text{ev}}(S, \mathbb{Q})=  H^0(S) \oplus H^2(S) \oplus H^4(S),$$ with $r\ge 1$. We denote by $\mM_h(v)$ the moduli space of $h$-semistable sheaves on $S$ with Chern character $v$. $\mM_h(v)$ is a  projective scheme. We always assume $v$ is such that slope semistability implies slope stability with respect $h$ for any sheaf on $S$ with Chern character $v$. We also assume $\mM_h(v)$  admits a universal family\footnote{The existence of the universal family is not essential in this paper, but we assume it for simplicity.}, denoted by $\eE$. For example, if $\gcd(r,\gamma\cdot h)=1$, these requirements are the case (see \cite[Corollary 4.6.7]{HL10}). If $p$ is the projection to the second factor of  $S\times \mM_h(v)$, by \cite{T98, HT10} $$\dR\hom_{p}(\eE,\eE)_0[1]$$ is the virtual tangent bundle of a (trace-free) perfect obstruction theory on $\mM_h(v)$, that gives a virtual fundamental class, denoted by 
$[\mM_h(v)]^{\vir}_0$.

Let $\cL$ be a line bundle on $S$ such that \begin{equation}\label{antican} H^0(\cL\otimes \omega_S^{-1})\neq 0, \end{equation} and let
\begin{align*}
 X:= \cL \xrightarrow{\q} S
\end{align*}
be the total space of the canonical line bundle on $S$. 
Note that $X$ is  non-compact with canonical bundle $\omega_X\cong \q^*(\cL^{-1}\otimes \omega_S)$. In particular, $X$ is a Calabi-Yau 3-fold if $\cL=\omega_S$.  Let $z:S\to X$ be the zero section inclusion. 


The one dimensional complex torus $\mathbb{C}^{\ast}$
acts on $X$ by the multiplication on the fibers of $\q$, so that  \begin{equation}\label{piox} \q_* \O_X=\bigoplus_{i=0}^{\infty} \cL^{-i}\otimes \t^{-i},\end{equation} where $\t$ denotes the trivial line bundle on $S$ with the $\C^*$-action of weight 1 on the fibers.
Let
\begin{align*}
\Coh_c(X) \subset \Coh(X)
\end{align*}
be the abelian category of 
coherent sheaves on $X$
whose supports are compact. 
The slope function $\mu_h$ on $\Coh_c(X) \setminus \{0\}$
\begin{align*}
\mu_h(\cE) =\frac{c_1(\q_{\ast}\cE) \cdot h}{\rank(\q_{\ast}\cE)} \in 
\mathbb{Q} \cup \{ \infty\}
\end{align*}
determines a slope stability condition on $\Coh_c(X)$\footnote{If $\rank(\q_{\ast}\cE)=0$, then $\mu_h(\cE)=\infty$.}. 
Let $\mMb_h(v)$ be the moduli space 
of $\mu_h$-stable sheaves $\cE \in \Coh_c(X)$ with $\ch(\q_{\ast}\cE)=v.$ For simplicity, we also assume $\mMb_h(v)$ admits a universal family, denoted by $\eEb$. This is again the case if for example $\gcd(r,\gamma\cdot h)=1$ (see \cite[Corollary 4.6.7]{HL10}).

We  denote by $\bp$ the projection 
from $X \times \mMb_h(v)$ 
to $\mMb_h(v)$. By the condition \eqref{antican} and \cite{T98, HT10}, one obtains a natural perfect obstruction theory $$E^{\bullet}\to \LL_{\mMb_h(v)} $$ on $\mMb_h(v)$ whose virtual tangent bundle is given by the complex\footnote{The truncation functor $\tau^{[i,j]}:D^b(-)\to D^b(-)$ sends a complex $A^\bullet$ in the given derived category of coherent sheaves to the complex $$\cdots 0\to \coker(d^{i-1})\to A^{i+1}\to \cdots \to A^{j-1}\to \ker(d^j)\to 0\cdots. $$ Similarly, the functors $\tau^{\le j}$ (resp. $\tau^{\ge i}$) truncates $A^{\bullet}$ as above from right only (resp. left only). }
$$(\Eb)^{\vee}=\tau^{[1,2]}(\dR \hom_{ \bp}(\overline{\eE}, 
\overline{\eE}))[1].
$$ Note that Serre duality and Hirzerbruch-Riemann-Roch hold for the compactly supported coherent sheaves, even though $X$ is not compact. Since $X$ is a nonsingular threefold, the complex $\dR \hom_{ \bp}(\overline{\eE}, 
\overline{\eE})$ is of perfect amplitude contained in $[0,3]$ \footnote{This means that  $\dR \hom_{ \bp}(\overline{\eE}, 
\overline{\eE})$ is quasi-isomorphic to a complex of vector bundles $A^0\to A^1\to A^2\to A^3$ where $A^i$ is in degree $i$.}. For any closed point $\cE \in \mMb_h(v)$ we know $\Hom(\cE,\cE)=\C$ by the stability of $\cE$. Also, $\Ext^3(\cE,\cE)=\C$ if $\cL=\omega_S$, and $0$ otherwise (by stability and Serre duality). So by basechange and Nakayama lemma (as is \cite[Sections 4.3, 4.4]{HT10}),
$\tau^{[1,2]}( \dR \hom_{ \bp}(\overline{\eE}, \overline{\eE}))$ is of perfect amplitude contained in $[1,2]$. Therefore, $\Eb$ is of perfect amplitude contained in $[-1,0]$, as desired.

 Using Hirzerbruch-Riemann-Roch, we can calculate the rank of $\Eb$: let $\cE$ be a coherent sheaf corresponding to a closed point of $\mMb_h(v)$. Then
\begin{align*}
\rank(\Eb)&=\text{ext}^1(\cE,\cE)-\text{ext}^2(\cE,\cE)\\&=1-\kappa-\sum_{i=0}^3(-1)^i\text{ext}^i(\cE,\cE),
\end{align*} where  $\kappa=1$ if $\cL=\omega_S$, otherwise $\kappa=0$. 
 Therefore we get
\begin{equation} \label{rankE}
\rank(\Eb)=\begin{cases} 0 & \cL=\omega_S,\\ r^2 c_1(\cL)\cdot(c_1(\cL)-\omega_S)/2+1 & \cL\neq \omega_S.\end{cases}
\end{equation}  Here, we used $\text{td}_1(X)=\q^*(c_1(\cL)-\omega_S)/2$ and $[S]^2=z_*c_1(\cL)$. This perfect obstruction theory is known to be  symmetric  if $\cL=\omega_S$ \cite{B09}. 

By \cite{GP99},
we obtain the  $\mathbb{C}^{\ast}$-fixed 
perfect obstruction theory
\begin{align*}
\Ebf=\left(\left (\tau^{[1,2]}( \dR \hom_{ \bp}(\overline{\eE}, 
\overline{\eE}))[1]
\right)^\vee\right)^{\C^{\ast}}
\to \LL_{\mMb_h(v)^{\mathbb{C}^{\ast}}}. 
\end{align*} over the fixed locus $ \mMb_h(v)^{\mathbb{C}^{\ast}}$. Since the $\C^*$-fixed set of $X$ (i.e. $S$) is projective, we conclude that $\mMb_h(v)^{\mathbb{C}^{\ast}}$ is projective, therefore $E^{\bullet,\fix}$ gives the virtual fundamental class $[\mMb_h(v)^{\mathbb{C}^{\ast}}]^{\vir}$. Define

\begin{equation}\label{DThv3} \widehat{\DT^{\cL}_h}(v;\alpha)=\int_{[\mMb_h(v)^{\mathbb{C}^{\ast}}]^{\rm{vir}}}
\frac{\alpha}{e((E^{\bullet,\mov})^\vee)} \quad \quad \alpha \in H^{*}_{\C^*}(\mMb_h(v),\mathbb{Q})_{\s},\end{equation} where $E^{\bullet,\mov}$ is the $\C^*$-moving part of $\Eb$, and $e(-)$ indicates the equivariant Euler class.

\begin{rmk} \label{notdef}
 Note that $(E^{\bullet,\mov})^\vee$ is the virtual normal bundle of $\mMb_h(v)^{\mathbb{C}^{\ast}}$. If $\mMb_h(v)$ is compact then $\displaystyle \widehat{\DT^{\cL}_h}(v;\alpha)$ will be equal to $\int_{[\mMb_h(v)]^{vir}}\alpha$ via the virtual localization formula \cite{GP99}. This is the case when $c_1(\cL)\cdot h< 0$, as then one can see that all the stable sheaves must be supported (even scheme theoretically!) on the zero section of $\q:X\to S$. Note that if $c_1(\cL)\cdot h\ge 0$, then $\int_{[\mMb_h(v)]^{vir}}\alpha$ is not defined in general.
\end{rmk}

\begin{rmk} If $\cL=\omega_S$ (i.e. $X$ is Calabi-Yau), one can also define the invariants by taking weighted Euler characteristics of the moduli spaces
$\int_{\mMw_h(v)} \nu_{\mMw} \ d\chi$, 
where $\nu_{\mMw}$ is Behrend's constructible function ~\cite{B09} on 
$\mMw_h(v)$. 
By localization this  
coincides with the integration of $\nu_{\mMw}$ 
over the $\mathbb{C}^{\ast}$-fixed locus 
$\mMw_h(v)^{\mathbb{C}^{\ast}}$. These invariants were computed by \cite{TT} and were shown to have modular properties in some interesting examples. If $\mMw_h(v)$ is compact e.g. when $K_S\cdot h <0$ (see Remark \ref{notdef}) then these invariants coincide with the invariants $\widehat{\DT^{\omega_S}_h}(v;1)$ \cite{B09}.
 \end{rmk}


In the case that $p_g(S)>0$ the fixed part of the obstruction theory $\Eb$ contains a trivial factor which causes the invariants $\widehat{\DT^{\cL}_h}(v)$ to vanish; we reduce the obstruction theory $\Eb$ as follows. Define $C^\bullet$ to be the cone of the composition 
\begin{align*}
\q_{\ast}\dR \hom_{X\times \mMb_h(v)}
(\eEb, \eEb) \xrightarrow{\q_*}
\dR \hom_{S\times \mMb_h(v)}(\q_{\ast} \overline{\eE},
\q_{\ast}\overline{\eE}) \xrightarrow{\tr} 
\O_{S\times \mMb_h(v) },
\end{align*} followed by the derived push forward via the projection $p : S\times \mMb_h(v)\to \mMb_h(v)$. Note that $\q$ is an affine morphism and hence $R^i\q_*=0$ for $i>0$. Then, define
\begin{equation} \label{3fold}
\Eb_{\red}:=\left(\tau^{\le 1}(C^\bullet )
\right)^\vee.
\end{equation}

\begin{lem} \label{perfect}
$\tau^{\le 1}(C^\bullet )$ is of perfect amplitude contained in $[0,1]$. Moreover,
$$h^0(\tau^{\le 1}(C^\bullet))\cong \ext^1_{\bp}(\eEb,\eEb),$$ and $h^1(\tau^{\le 1}(C^\bullet))$ fits into the short exact sequence $$0\to h^1(\tau^{\le 1}(C^\bullet))\to \ext^2_{\bp}(\eEb,\eEb) \to \O_{\mMb_h(v)}^{p_g}\to 0.$$
\end{lem}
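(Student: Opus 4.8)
The plan is to analyze the cone $C^\bullet$ by unwinding the two-step composition that defines it and keeping track of the truncation. First I would observe that since $\q$ is affine, $\dR\q_* = \q_*$ is exact, so $\q_*\dR\hom_{X\times\mMb_h(v)}(\eEb,\eEb)$ computes $\dR\hom$ on $X$ pushed to $S\times\mMb_h(v)$, and composing with $p_*$ just gives $\dR\hom_{\bp}(\eEb,\eEb)$, which by the discussion preceding the lemma has cohomology $\mathcal{E}xt^i_{\bp}(\eEb,\eEb)$ concentrated in degrees $0,1,2$ (and $3$ when $\cL=\omega_S$, but that summand is $\O_{\mMb_h(v)}$ by Serre duality and will not affect $\tau^{\le 1}$). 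The trace map to $\O_{S\times\mMb_h(v)}$, after $p_*$, lands in $\dR p_*\O_{S\times\mMb_h(v)} = \dR\Gamma(\O_S)\otimes\O_{\mMb_h(v)}$, which since $H^1(\O_S)=0$ and $H^2(\O_S)=\C^{p_g}$ is $\O_{\mMb_h(v)}\oplus \O_{\mMb_h(v)}^{p_g}[-2]$. So $C^\bullet$ is the cone of a map from a complex with cohomology in degrees $0,1,2$ to $\O_{\mMb_h(v)}\oplus\O_{\mMb_h(v)}^{p_g}[-2]$.

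Next I would write down the long exact sequence of cohomology sheaves for the triangle $\dR p_*(\cdot)\to \O_{\mMb_h(v)}\oplus\O^{p_g}[-2]\to C^\bullet\to [1]$. In degree $0$: the trace $\mathcal{H}om_{\bp}(\eEb,\eEb)\to\O_{\mMb_h(v)}$ is split surjective (it has a section $\tfrac{1}{r}\cdot\mathrm{id}$, using $r\ge 1$ invertible over $\C$; alternatively $\mathcal{H}om_{\bp}(\eEb,\eEb)=\O_{\mMb_h(v)}$ by stability), so $h^{-1}(C^\bullet)=0$ and $h^0(C^\bullet)$ is the cokernel, which is $0$; wait—more carefully, the relevant piece gives $h^0(C^\bullet)\cong \coker(h^0(\tr))=0$ and then $h^1(C^\bullet)\cong \mathcal{E}xt^1_{\bp}(\eEb,\eEb)$ because there is no $\O$ in degree $1$ on the target. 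In degree $2$: we get the exact sequence $0\to h^1(C^\bullet)$... I should instead track that the target contributes $\O^{p_g}$ in degree $2$, yielding $h^2(C^\bullet)$ sitting in $\mathcal{E}xt^2_{\bp}(\eEb,\eEb)\to\O^{p_g}\to h^2(C^\bullet)\to \mathcal{E}xt^3_{\bp}(\eEb,\eEb)$; applying $\tau^{\le 1}$ kills $h^{\ge 2}$, and the key point is that the connecting map $\O^{p_g}\hookrightarrow$ into the shifted cone realizes $h^1(\tau^{\le 1}C^\bullet)$ as the kernel of the surjection $\mathcal{E}xt^2_{\bp}(\eEb,\eEb)\to\O^{p_g}$. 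Concretely: applying $\tau^{\le 1}$ to $C^\bullet$ and using that $h^0(C^\bullet)=0$, $h^1(C^\bullet)=\mathcal{E}xt^1_{\bp}(\eEb,\eEb)$ gives the first claimed isomorphism directly, and the short exact sequence for $h^1(\tau^{\le 1}C^\bullet)$ comes from truncating: $\tau^{\le 1}C^\bullet$ differs from $C^\bullet$ by removing the part of $h^2$ onwards, and the surjectivity $\mathcal{E}xt^2_{\bp}(\eEb,\eEb)\twoheadrightarrow\O^{p_g}$ is exactly the statement that $H^2(\O_S)=\C^{p_g}$ is generated by traces of the identity (use the Atiyah class / the fact that $\tr\circ\q_*$ applied to $\mathrm{id}$ recovers multiplication by $r$ on $H^2(\O_S)$, hence surjective since $r$ is invertible).

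The remaining point is perfect amplitude: $\dR\hom_{\bp}(\eEb,\eEb)$ is perfect (computed on the nonsingular threefold $X$, it has amplitude $[0,3]$), $\O_{\mMb_h(v)}\oplus\O^{p_g}[-2]$ is perfect, hence $C^\bullet$ is perfect of amplitude contained in $[-1,3]$; since we have just shown $h^i(C^\bullet)=0$ for $i\le 0$, $\tau^{\le 1}C^\bullet$ has cohomology only in degrees $0,1$, and a perfect complex with cohomology in $[0,1]$ has amplitude contained in $[0,1]$ (represent by a two-term complex of vector bundles, as in the argument for $\Eb$ earlier in the paper). I expect the main obstacle to be the surjectivity $\mathcal{E}xt^2_{\bp}(\eEb,\eEb)\twoheadrightarrow\O_{\mMb_h(v)}^{p_g}$: one must argue carefully, fiberwise over closed points and then via base change/Nakayama, that the composite $\mathrm{Ext}^2_X(\cE,\cE)\xrightarrow{\q_*}\mathrm{Ext}^2_S(\q_*\cE,\q_*\cE)\xrightarrow{\tr}H^2(\O_S)$ is surjective — the cleanest route is to note $\q_*\mathrm{id}_{\cE}=\mathrm{id}_{\q_*\cE}$ so the composite sends $\mathrm{id}_{\cE}\cdot(\text{class in }H^2(\O_S))$ to $r$ times that class, and $r\in\C^*$; everything else is a diagram chase in the derived category together with the base-change and Nakayama arguments already invoked for $\Eb$.
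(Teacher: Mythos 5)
Your overall strategy is the paper's: form the exact triangle $\dR\hom_{\bp}(\eEb,\eEb)\to\dR p_*\O_{S\times \mMb_h(v)}\to C^\bullet$, use that the composite $\O\to\q_*\dR\hom(\eEb,\eEb)\to\dR\hom(\q_*\eEb,\q_*\eEb)\to\O$ is $r\cdot\id$ to force surjectivity of all the trace maps, and feed in $H^1(\O_S)=0$, stability, and base change plus Nakayama. However, your cone bookkeeping in low degrees is off by one and, as written, contradicts the statement you are proving. For a triangle $A\to B\to C^\bullet$ the long exact sequence reads $\cdots\to h^i(A)\to h^i(B)\to h^i(C^\bullet)\to h^{i+1}(A)\to\cdots$, so $h^0(C^\bullet)$ is an extension of $\ker\bigl(\ext^1_{\bp}(\eEb,\eEb)\to R^1p_*\O\bigr)=\ext^1_{\bp}(\eEb,\eEb)$ by $\coker\bigl(\hom_{\bp}(\eEb,\eEb)\to\O\bigr)=0$; hence $h^0(C^\bullet)\cong\ext^1_{\bp}(\eEb,\eEb)$, not $0$, and $h^1(C^\bullet)=\ker\bigl(\ext^2_{\bp}(\eEb,\eEb)\to\O^{p_g}\bigr)$, not $\ext^1_{\bp}(\eEb,\eEb)$. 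What you have computed is the cohomology of the shifted cone $\cone(\cdot)[-1]$. With your indexing the "first claimed isomorphism" would become $h^0(\tau^{\le 1}C^\bullet)=0$, which is not the statement of the lemma, and your later (correct) identification of $h^1(\tau^{\le 1}C^\bullet)$ with $\ker(\ext^2_{\bp}\to\O^{p_g})$ contradicts your own assertion that $h^1(C^\bullet)\cong\ext^1_{\bp}(\eEb,\eEb)$. All the ingredients are present, but the indices must be redone consistently.

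The perfect-amplitude step also does not follow as you argue it. A perfect complex whose cohomology \emph{sheaves} vanish outside $[0,1]$ need not have tor-amplitude in $[0,1]$ (a Koszul resolution of a point, concentrated cohomologically in degree $0$, is a counterexample); the correct criterion is vanishing, outside $[0,1]$, of the cohomology of the \emph{derived restrictions} to closed points. This is exactly what the paper checks: restricting the triangle to a closed point $\cE$ gives $h^{-1}(C^\bullet|_m)=0$ and $h^{2}(C^\bullet|_m)=\Ext^3(\cE,\cE)$, so when $\cL\neq\omega_S$ one has $C^\bullet=\tau^{\le 1}(C^\bullet)$ already perfect in $[0,1]$, whereas when $\cL=\omega_S$ one has $\Ext^3(\cE,\cE)=\C$ and must invoke base change and Nakayama to see that the truncation $\tau^{\le 1}(C^\bullet)$ is still representable by a two-term complex of vector bundles. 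You gesture at the "base-change and Nakayama arguments already invoked for $\Eb$," which is the right fix, but the sentence "a perfect complex with cohomology in $[0,1]$ has amplitude contained in $[0,1]$" should be replaced by this fiberwise argument; in particular the $\cL=\omega_S$ case genuinely requires it.
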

\begin{proof}
Let $m \in \mMb_h(v)$ be a closed point corresponding to a stable coherent sheaf $\cE$. 
Restricting the resulting exact triangle \begin{equation}\label{mapcone} \dR\hom_{\bp}(\eEb,\eEb)\xrightarrow{\dR p_*\circ ( \tr\circ \q_*)} \dR p_*\O\to C^\bullet\end{equation}  to this closed point (i.e. derived pullback) and taking cohomology we get the exact sequence 
\begin{align*}
0&\to h^{-1}(C^\bullet|_m)\to  \Hom(\cE,\cE)\xrightarrow{\tr\circ \q_*} H^0(\O_S)\to  h^{0}(C^\bullet|_m)\to \Ext^1(\cE,\cE)\xrightarrow{\tr\circ \q_*} H^1(\O_S)\\ 
&\to h^{1}(C^\bullet|_m)\to  \Ext^2(\cE,\cE)\xrightarrow{\tr\circ \q_*} H^2(\O_S)\to  h^{2}(C^\bullet|_m)\to \Ext^3(\cE,\cE)\to 0. 
\end{align*}
Now since the composition
$$\O_{S\times \mMb_h(v) } \xrightarrow{\id} \q_{\ast}\dR \hom_{X\times \mMb_h(v)}
(\eEb, \eEb) \xrightarrow{\q_*}
\dR \hom_{S\times \mMb_h(v)}(\q_{\ast} \overline{\eE},
\q_{\ast}\overline{\eE}) \xrightarrow{\tr} 
\O_{S\times \mMb_h(v) }$$ is $r\cdot \id$, we see that all the arrows labeled by $\tr\circ \q_*$ in the long exact sequence above are surjective. Combining with the stability of $\cE$, and vanishing $ H^1(\O_S)=0$, we conclude
$h^{-1}(C^\bullet|_m)=0$ and $h^{2}(C^\bullet|_m)=\Ext^3(\cE,\cE)$. 

Now if $\cL \neq \omega_S$ then $\Ext^3(\cE,\cE)=0$ and so $C^\bullet$ is already of perfect amplitude contained in $[0,1]$ (so $C^\bullet =\tau^{\le 1}(C^\bullet)$). If $\cL=\omega_S$ then $\Ext^3(\cE,\cE)=\C$ by Serre duality and stability of $\cE$. So again by basechange and Nakayama Lemma $\tau^{\le 1}(C^\bullet)$ is of perfect amplitude contained in $[0,1]$, and the first part of Lemma is proven. 

The claim about sheaf cohomologies follows from the long exact sequence of sheaf cohomology (associated to the exact triangle \eqref{mapcone}), the identity $h^i(C^\bullet)=h^i(\tau^{\le 1}(C^\bullet))$ for $i=0,1$, and the fiberwise analysis above.

\end{proof}

\begin{thm} \label{prop:red}
$\left(\Eb_{\red}\right)^{\vee}=\tau^{\le 1}(C^\bullet )$ is the virtual tangent bundle of a perfect obstruction theory over $\mMb_h(v)$.
\end{thm}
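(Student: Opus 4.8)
The plan is to promote the morphism from $\Eb_{\red}$ to the cotangent complex of $\mMb_h(v)$ out of the natural one $\phi\colon\Eb\to\LL_{\mMb_h(v)}$, and then verify the two conditions for a perfect obstruction theory: perfect amplitude in $[-1,0]$ (already done in Lemma \ref{perfect}) and the $h^0$-iso / $h^{-1}$-surjection criterion of Behrend--Fantechi. The key observation is that the truncation $\Eb\to\Eb_{\red}$ and the natural map $\Eb\to\LL_{\mMb_h(v)}$ should both factor through, or be compatible with, the trace construction, so that the reduced map is obtained from $\phi$ by a commutative diagram of exact triangles.

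First I would set up the diagram. Dualizing the exact triangle \eqref{mapcone} gives a triangle
$$C^{\bullet\vee}\to \dR p_*\O^\vee \xrightarrow{(\dR p_*\circ(\tr\circ\q_*))^\vee} \dR\hom_{\bp}(\eEb,\eEb)^\vee,$$
and by definition $(\Eb)^\vee=\tau^{[1,2]}\dR\hom_{\bp}(\eEb,\eEb)[1]$, while $(\Eb_{\red})^\vee=\tau^{\le 1}(C^\bullet)$. The point of Lemma \ref{perfect} is that after applying $\tau^{\le 1}$ to $C^\bullet$ one has a two-term complex whose $h^0$ agrees with $\ext^1_{\bp}(\eEb,\eEb)=h^1(\dR\hom_{\bp}(\eEb,\eEb))$ and whose $h^1$ is the kernel of the surjection $\ext^2_{\bp}(\eEb,\eEb)\twoheadrightarrow\O^{p_g}$; in particular there is a natural map $\Eb_{\red}\to\Eb$ inducing an isomorphism on $h^0$ of the duals and an injection on $h^1$ of the duals (equivalently, a surjection $H^{-1}(\Eb)\twoheadrightarrow H^{-1}(\Eb_{\red})$ and an iso on $H^0$). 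Composing this with $\phi$ gives the candidate $\phi_{\red}\colon\Eb_{\red}\to\LL_{\mMb_h(v)}$. One must check $\phi_{\red}$ is actually a morphism in the derived category and not merely a roof; this is where one uses that the trace map and its splitting $\tfrac1r\cdot$ are honest maps of complexes, so the whole construction is functorial, and that $\q$ being affine makes $\dR p_*\circ\q_*=\bp_*$ harmless.

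Next I would verify the obstruction-theory axioms. Since $\phi$ is an obstruction theory, $H^0(\phi)$ is an isomorphism and $H^{-1}(\phi)$ is surjective. Because $\Eb_{\red}\to\Eb$ is an isomorphism on $H^0$ and surjective on $H^{-1}$ (shown above, using $H^1(\O_S)=0$ and stability to kill $h^{-1}(C^\bullet)$ and identify $h^2$), the composite $\phi_{\red}$ still has $H^0$ an isomorphism and $H^{-1}$ surjective. Perfect amplitude in $[-1,0]$ is exactly the first assertion of Lemma \ref{perfect}. That finishes the argument once the maps are in place.

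The main obstacle I expect is the derived-categorical bookkeeping that makes $\phi_{\red}$ a genuine morphism compatible with $\phi$: one needs the reduction (removing the trivial rank-$p_g$ piece $\O^{p_g}\cong H^2(\O_S)\otimes\O$ from the obstruction sheaf via the trace) to be realized by an actual triangle map, not just an abstract statement about cohomology sheaves, and one needs to know that the trivial factor being removed really is split off canonically by $\tr\circ\q_*$ — this is precisely what the computation ``the composition $\O\to\q_*\dR\hom\to\dR\hom(\q_*,\q_*)\to\O$ is $r\cdot\id$'' in the proof of Lemma \ref{perfect} buys us, since $r\ge 1$ is invertible. Modulo that, the rest is formal.
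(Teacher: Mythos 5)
Your overall strategy --- realize $\Eb_{\red}$ as a direct summand of $\Eb$ via the splitting induced by $\tr\circ\q_*$, and obtain $\phi_{\red}$ by composing with the Huybrechts--Thomas obstruction theory $\phi\colon\Eb\to\LL_{\mMb_h(v)}$ --- is the same as the paper's Behrend--Fantechi proof. But there is a genuine gap at exactly the step you dismiss as formal. The map you must compose with $\phi$ is the \emph{inclusion} $\iota\colon\Eb_{\red}\hookrightarrow\Eb$ of the direct summand (a map $\Eb\to\Eb_{\red}$ points the wrong way to yield a morphism to $\LL_{\mMb_h(v)}$), and the splitting gives $\Eb\cong\Eb_{\red}\oplus\O^{p_g}[1]$, so $H^{-1}(\iota)$ is \emph{injective} with cokernel $\O_{\mMb_h(v)}^{p_g}$, not surjective as you assert. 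The composition rule you invoke therefore fails: from $H^{-1}(\phi)$ surjective and $H^{-1}(\iota)$ injective one cannot conclude that $H^{-1}(\phi\circ\iota)$ is surjective. Precisely when $p_g(S)>0$ --- the only case in which the reduction does anything --- the summand $\O^{p_g}\subset H^{-1}(\Eb)$ could a priori be needed to hit all of $H^{-1}(\LL_{\mMb_h(v)})$. Discarding a rank-$p_g$ piece of the obstruction sheaf is legitimate only if the actual obstruction classes never land in that piece, and that is a geometric statement, not a diagram chase.

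This missing input is the heart of the paper's proof. In the Behrend--Fantechi version one checks, via \cite[Theorem 4.5]{BF97}, that for every square-zero extension $B_0\subset B$ the class $\theta^*\varpi(f)$ coincides with $\alpha^*\varpi(f)$, i.e.\ that the obstruction class $\alpha^*\varpi(f)\in\Ext^2(\G_0,\G_0\otimes\bp^*I)$ equals its own $(\tr\circ\q_*)$-free component. This is Lemma \ref{tech2}: pushing the class down by $\q_*$ (a Fourier--Mukai/projection-formula computation using $\varpi_X=\widetilde{\q}^*\varpi_S$) identifies it with the obstruction class of $\q_*\G_0$ on $S$, whose trace is the obstruction to deforming the line bundle $\det(\q_*\G_0)$ and hence vanishes. (The Li--Tian version of the proof needs the parallel Lemma \ref{tech}, $\q_*\ob(\G)=\ob(\q_*\G)$, for the same purpose.) Without this argument --- the unobstructedness of determinant deformations, transported from $S$ up to $X$ through the affine morphism $\q$ --- the surjectivity of $H^{-1}(\phi_{\red})$ is unproved and your proof does not close. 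The first half of your write-up (the splitting is realized by honest maps because $\tr\circ\q_*$ is split by $\frac{1}{r}\cdot\id$) is correct and matches the paper.
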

\begin{proof}[Proof of Theorem \ref{prop:red} using Li-Tian \cite{LT98} approach] We closely follow the construction of \cite{T98}. We need to show that $\tau^{\le 1}(C^\bullet)$ is a perfect tangent-obstruction complex over $\mMb_h(v)$ in the following sense (\cite[Definition 3.29]{T98}): 

Suppose $B_0$ is an affine scheme over $\C$, $f: B_0 \to \mMb_h(v)$ is a morphism, and $\I$ is an $\O_{B_0}$-module (this is data (3.24) in \cite{T98}). 
Let $\tau^{\le 1}(C^\bullet)\cong C^0\to C^1$ be a 2-term locally free resolution, which is possible by Lemma \ref{perfect}. We have to show that the sheaf cohomologies of the 2-term complex  $$\dL f^*( \tau^{\le 1}(C^\bullet))\overset{\dL}{ \otimes }\I  \cong  f^*C^0\otimes \I\to f^*C^1\otimes \I$$ are respectively the evaluations at $\I$ of the tangent and obstruction functors of $ \mMb_h(v)$ (\cite[Definitions 3.25, 3.27]{T98}), and they also satisfy the compatibility with  basechange. 
Consider the composition
$$ \q_{\ast}\dR \hom_{X\times B_0}
(f^*\eEb, f^*\eEb \otimes \bp^*\I) \xrightarrow{\q_*}
\dR \hom_{S\times B_0}(\q_{\ast} f^* \overline{\eE},
\q_{\ast}f^* \overline{\eE}\otimes p^*\I) \xrightarrow{\tr} 
p^* \I,$$ where $p: S\times B_0\to B_0$ and $\bp: X\times B_0\to B_0$ and we use our convention to denote $\q\times \id$ by $\q$ all over (and so $\bp=p\circ \q$). If we take the mapping cone, apply $\dR p_*$, and take sheaf cohomology, as in the proof of Lemma \ref{perfect}, we get the isomorphism
\begin{equation}\label{ish^0}h^0\big(\dL f^*( \tau^{\le 1}(C^\bullet))\overset{\dL}{ \otimes }\I\big)\cong \ext^1_{\bp}(f^*\eEb,f^*\eEb\otimes \bp^* \I),\end{equation} and the short exact sequence \begin{equation}\label{sesh^1}0\to h^1\big(\dL f^*( \tau^{\le 1}(C^\bullet))\overset{\dL}{ \otimes }\I\big)\to \ext^2_{\bp}(f^*\eEb,f^*\eEb\otimes \bp^* \I) \xrightarrow{\tr\circ \q_*} \I^{\oplus p_g}\to 0.\end{equation}
Note that here we used the fact $$h^i\big(\dL f^*( \tau^{\le 1}(C^\bullet))\overset{\dL}{ \otimes }\I\big)\cong h^i\big(\dL f^*(C^\bullet)\overset{\dL}{ \otimes }\I\big),\qquad i=0,1,$$ which is true because  $\tau^{\le 1}(C^\bullet)$ is of perfect amplitude contained in $[0,1]$ by Lemma \ref{perfect}.

\cite[Prop 3.26]{T98} proves that  $\ext^1_{\bp}(f^*\eEb,f^*\eEb\otimes \bp^* -)$ is the tangent functor for $f:B_0\to \mMb_h(v)$. Therefore,  \eqref{ish^0} implies that $$h^0\big(\dL f^*( \tau^{\le 1}(C^\bullet))\overset{\dL}{ \otimes }-\big)$$ is the tangent functor for $f:B_0\to \mMb_h(v)$.

Next, using the collapse of Tor-$\ext$ spectral sequence as in the proof of \cite[Theorem 3.28]{T98}, $\ext^2_{\bp}(f^*\eEb,f^*\eEb\otimes \bp^* \I)\cong \ext^2_{\bp}(f^*\eEb,f^*\eEb)\otimes  \I$. So by \eqref{sesh^1} and an analog of the short exact sequence in Lemma \ref{perfect} over $B_0$
\begin{equation} \label{Iout} h^1\big(\dL f^*( \tau^{\le 1}(C^\bullet))\overset{\dL}{ \otimes }\I\big)\cong h^1\big(\dL f^*( \tau^{\le 1}(C^\bullet))\big)\otimes \I.\end{equation}
By \cite[Theorem 3.28]{T98} $\ext^2_{\bp}(f^*\eEb,f^*\eEb)$ is an obstruction sheaf for $f:B_0\to \mMb_h(v)$ (in the sense of the following paragraph). Our goal is to show that $$h^1\big(\dL f^*( \tau^{\le 1}(C^\bullet))\big)$$ is also an obstruction sheaf for $f:B_0\to \mMb_h(v)$. 

Let $B_0\subset B\subset B_1$ be closed immersions of $B_0$-schemes over $\C$. We denote the ideals of $B_0\subset B$, $B_0\subset B_1$, $B\subset B_1$ by $\nn$, $\mm$ and $\jj$, respectively, and suppose that $\mm\cdot \jj=0$.  We use the same symbols to denote the pullbacks of these ideals via $p$ and $\bp$. Let $\G_0$ be a sheaf on $B_0\times X$ flat over $B_0$ corresponding to a morphism $f:B_0\to  \mMb_h(v)$, and $\G$ be a sheaf on $B\times X$ flat over $B$ extending $\G_0$. Note that $\q$ is an affine morphism and hence $R^i\q_*=0$ for $i>0$, so by flat basechange $\q_*\G_0$ and $\q_*\G$ remain flat and $\q_*\G|_{B_0\times S}=\q_*\G_0$. By \cite[Proposition 3.13]{T98}, the obstruction for extending $\G$  (respectively $\q_* \G$) to a sheaf on $B_1\times X$ (respectively $B_1\times S$) flat over $B_1$ lies in $$\ob(\G, B,B_1)\in \Ext^2_{X\times B_0}(\G_0,\G_0\otimes \jj),\ \text{(resp. } \ob(\q_*\G, B,B_1)\in \Ext^2_{S\times B_0}(\q_* \G_0,\q_*\G_0\otimes \jj) \text{)}.$$ We will use the abreviations  $\ob(\G)$ and  $\ob(\q_*\G)$ to denote these classes. By definition, $\ob(\G)=0$ (resp. $\ob(\q_* \G)=0$) if and only if there is an extension of $\G$ (resp. $\q_* \G$) over $X\times B_1$ (resp. $S\times B_1$) which is flat over $B_1$. Theorem \cite[Theorem 3.28]{T98} then shows that (as an application of the collapse of Leray spectral sequence) 
\begin{equation} \label{GamB0}\Gamma_{B_0}\big(\ext^2_{\bp}(f^*\eEb,f^*\eEb)\otimes \jj\big)\cong  \Ext^2_{X\times B_0}(\G_0,\G_0\otimes \jj),\end{equation} from which it follows that 
 $\ob(\G) \in \Gamma_{B_0}\big(\ext^2_{\bp}(f^*\eEb,f^*\eEb)\otimes \jj \big)$. The compatibility with basechange follows from basechange property of $\ext_{\bp}^i$.

We will prove the following lemma after finishing the proof of the proposition:
\begin{lem} \label{tech}
Under the natural map $$\Ext^2_{X\times B_0}(\G_0,\G_0\otimes \jj)\xrightarrow{\q_*}\Ext^2_{S\times B_0}(\q_*\G_0,\q_*\G_0\otimes \jj),$$
we have $\q_*\ob(\G)=\ob(\q_*\G).$
\end{lem}
 By \cite[Theorem 3.23]{T98}, 
the obstruction for deforming $\det (q_* \G)$ is given by $\tr(\ob(\q_*\G))$. However, there is no obstruction for deforming line bundles, and therefore $\tr(\ob(\q_*\G))=0$. By Lemma \ref{tech} this means that $\tr(\q_*\ob( \G))=0,$  or equivalently 
 $$\q_* \ob(\G) \in \Ext^2_{S\times B_0}(\q_*\G_0,\q_*\G_0\otimes \jj)_0,$$ 
and this means that $\ob(\G) \in \ker(\tr \circ \q_*  )$, which by \eqref{Iout}, \eqref{GamB0}, and the short exact sequence \eqref{sesh^1}, gives $$\ob(\G) \in \Gamma_{B_0}\big(h^1(\dL f^*( \tau^{\le 1}(C^\bullet))\otimes \jj\big).$$ This completes the proof of $h^1\big(\dL f^*( \tau^{\le 1}(C^\bullet))\big)$ is an obstruction sheaf for $f:B_0\to \mMb_h(v)$. 
\end{proof}

\begin{proof}[Proof of Lemma \ref{tech}] Suppose that $\G_1$ is a $B_1$-flat lift of $\G$. As in the proof of \cite[Proposition 3.13]{T98} we have short exact sequences $0\to \jj\to \mm\to \nn\to 0,$ and \begin{equation}\label{exten1} 0\to \G\otimes \nn\to \G\to \G_0\to 0,\quad 0\to \G\otimes \mm\to \G_1\to \G_0\to 0.\end{equation} Since $R^i\q_*=0$ for $i>0$, we get the corresponding short exact sequences \begin{equation}\label{exten2} 0\to \q_*\G\otimes \nn\to \q_*\G\to \q_*\G_0\to 0,\quad 0\to \q_*\G\otimes \mm\to \q_*\G_1\to \q_*\G_0\to 0,\end{equation} 
$$0\to \G_0\otimes \jj\to \G\otimes \mm \to \G\otimes \nn\to 0,\quad 0\to \q_*\G_0\otimes \jj\to \q_*\G\otimes \mm\to \q_*\G\otimes \nn \to 0.$$
Applying $\Hom(\G_0,-)$ and $\Hom(\q_*\G_0,-)$ to the last two sequences above and using the functoriality of $\q_*$ we get the following commutative diagram with exact rows:
$$\xymatrix{
\Ext^1(\G_0,\G\otimes \mm) \ar^{\q_*}[d]\ar[r] &\Ext^1(\G_0,\G\otimes \nn) \ar^{\q_*}[d] \ar^{\partial}[r] & \Ext^2(\G_0,\G_0\otimes \jj) \ar^{\q_*}[d] \\  \Ext^1(\q_*\G_0,\q_*\G\otimes \mm) \ar[r] &\Ext^1(\q_*\G_0,\q_*\G\otimes \nn) \ar^{\partial}[r]& \Ext^2(\q_*\G_0,\q_*\G_0\otimes \jj).} 
$$ In particular we get $ \partial \circ \q_*=\q_*\circ \partial $. Let $e\in \Ext^1(\G_0,\G\otimes \nn)$ be the class of the first extension in \eqref{exten1},  and $e'\in \Ext^1(\q_*\G_0,\q_*\G\otimes \nn)$ be the class of the first extension in \eqref{exten2}. By the naturality of $\q_*$ we have $e'=\q_*(e)$. By  \cite[Proposition 3.13]{T98} $\ob(\G_0)=\partial(e)$ and $$\ob(\q_*\G_0)=\partial(e')=\partial(\q_*(e))=\q_*(\partial(e))=\q_*(\ob(\G_0)).$$
\end{proof}

\begin{proof}[Proof of Theorem \ref{prop:red} using Behrend-Fantechi 
\cite{BF97} approach] By Lemma \ref{perfect} we know that $\Eb_{\red}$ is of perfect amplitude contained in $[-1,0]$. It suffices to show that there exists a map $\theta: \Eb_{\red}\to \LL_{\mMb_h(v)}$ in derived category  that defines an obstruction theory, i.e. $h^0(\theta)$ is an isomorphism and and $h^{-1}(\theta)$ is surjective. As usual it suffices to work with the truncation $\tau^{\ge -1}$ of the cotangent complex and this is what we mean by $ \LL_{-}$ in this proof. Again we use the fact that 
the composition $$\O_{S\times \mMb_h(v) } \xrightarrow{\id} \q_{\ast}\dR \hom_{X\times \mMb_h(v)}
(\eEb, \eEb) \xrightarrow{\q_*}
\dR \hom_{S\times \mMb_h(v)}(\q_{\ast} \overline{\eE},
\q_{\ast}\overline{\eE}) \xrightarrow{\tr} 
\O_{S\times \mMb_h(v) }$$ is $r\cdot \id$. This implies that the composition $\tr\circ \q_*$ splits and as a result after applying $Rp_*$ we get the isomorphism 
\begin{equation} \label{splittt}
\dR \hom_{\bp}
(\eEb, \eEb)\cong C^\bullet[-1] \oplus \dR p_* \O_{S\times \mMb_h(v) } \cong C^\bullet[-1] \oplus \O_{ \mMb_h(v) } \oplus \O_{ \mMb_h(v)}^{p_g}[-2].
\end{equation} 
Applying truncation functors to both sides of this splitting it is easy to see that
\begin{equation} \label{splittt1}
\tau^{[1,2]}(\dR \hom_{\bp}
(\eEb, \eEb))\cong \tau^{\le 1}(C^\bullet)[-1] \oplus \O_{ \mMb_h(v)}^{p_g}[-2].
\end{equation} 
Now there is a map $\alpha: \big(\tau^{[1,2]}(\dR \hom_{\bp}(\eEb, \eEb))\big)^{\vee}[1] \to  \LL_{\mMb_h(v)}$ constructed in  \cite[(4.10)]{HT10} by means of the truncated Atiyah class $$A(\eEb) \in \Ext^1_{\mMb_h(v)\times X}(\eEb,\eEb\overset {\dL}{\otimes}  \LL_{\mMb_h(v)\times X})$$ and an application of the truncation functor $\tau^{[1,2]}$. This
together with the splitting \eqref{splittt1} gives a map $$\theta:\Eb_{\red}= \big(\tau^{\le 1}(C^\bullet)\big)^\vee \to \LL_{\mMb_h(v)}.$$
It remains to show that $\theta$ is an obstruction theory. For this we use the criterion in \cite[Theorem 4.5]{BF97} and the fact that it is already proven that $\alpha$ is an obstruction theory in the last part of \cite[Section 4.4]{HT10}.

The question of being an obstruction theory is local in nature, so let $B_0\subset B$ be a closed immersion of affine schemes over $\C$ with the ideal sheaf $I$ such that $I^2=0$, and let $\G_0$ be a sheaf on $B_0\times X$ flat over $B_0$ corresponding to a morphism $f:B_0\to  \mMb_h(v)$. Let $\bp:X\times B_0 \to B_0$ and $p:S\times B_0\to B_0$ be the obvious projections. We have the chain of morphisms $$f^* \Eb_{\red}\xrightarrow{f^*\theta} f^*\LL_{\mMb_h(v)}\to \LL_{B_0}.$$ The pullback of the Kodaira-Spencer class $\kappa(B_0/B)\in \Ext^1(\LL_{B_0}, I)$ via the second arrow gives the obstruction class $\varpi(f) \in \Ext^1(f^*\LL_{\mMb_h(v)},I)$ for extending the map $f$ to $B$.
Pulling back further via the first arrow we get $\theta^*\varpi(f) \in \Ext^1(f^*\Eb_{\red},I)$. By  \cite[Theorem 4.5]{BF97} we have to show that $\theta^*\varpi(f)=0$ if and only if $f$ can be extended to $B$, and in this case the extensions form a torsor over $\Hom(f^*\Eb_{\red},I)$

Similarly pulling back $\varpi(f)$ via $f^*\alpha$ we get 
\begin{align*}\alpha^*\varpi(f) \in &\Ext^1\big(f^* \big(\tau^{[1,2]}(\dR \hom_{\bp}(\eEb, \eEb))\big)^{\vee}[1],I\big)\cong \\
 &\mathbb{H}^2\big(\tau^{[1,2]}(\dR \hom_{\bp}(f^*\eEb, f^*\eEb\otimes \bp^*I))\big)\cong\\ 
&\Ext^2(\G_0,\G_0\otimes \bp^*I),\end{align*} where $\mathbb{H}^2$ denotes the hypercohomology and the isomorphisms are established in \cite[Section 4.4]{HT10} using the collapse of the Leray spectral sequence (and here is where $B_0$ affine is needed!). Taking hypercohomology from both sides of \eqref{splittt1} and identifications above, we see that $\theta^*\varpi(f)$ is the $(\tr\circ \q_*)$-free part of $\alpha^*\varpi(f)$ (i.e. the part corresponding to the first summand in decomposition \eqref{splittt1}). But by Lemma \ref{tech2} below $\alpha^*\varpi(f)$ is the same as its own $(\tr\circ \q_*)$-free part, therefore $\theta^*\varpi(f)=\alpha^*\varpi(f)$. Since $\alpha$ is an obstruction theory, by  \cite[Theorem 4.5]{BF97}, $\theta^*\varpi(f)=\alpha^*\varpi(f)=0$ if and only if $f$ can be extended to $B$, and in this case the extensions form a torsor over $$\Hom(f^* \big(\tau^{[1,2]}(\dR \hom_{\bp}(\eEb, \eEb))\big)^{\vee}[1],I)\cong\Hom(f^*\Eb_{\red},I),$$ where the isomorphism is again by applying hypercohomology to \eqref{splittt1}.

\end{proof}

\begin{lem} \label{tech2}
$\q_*(\alpha^*\varpi(f))\in  \Ext^2(\q_*\G_0,\q_*\G_0\otimes p^*
I)_0.$ \end{lem}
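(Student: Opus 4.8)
The plan is to recognize $\alpha^{*}\varpi(f)$ as the ordinary sheaf-theoretic obstruction to deforming $\G_{0}$, to transport it through $\q_{*}$ by the argument of Lemma \ref{tech}, and then to kill its trace using the fact that line bundles on $S$ are unobstructed.

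First I would unwind the identifications already in place in the proof of Theorem \ref{prop:red}: the map $\alpha$ is built from the truncated Atiyah class of $\eEb$ following \cite[Section 4.4]{HT10}, and the verification that $\alpha$ is an obstruction theory in the sense of \cite[Theorem 4.5]{BF97} --- carried out in the last part of \cite[Section 4.4]{HT10} --- shows that $\alpha^{*}\varpi(f)\in\Ext^{2}(\G_{0},\G_{0}\otimes\bp^{*}I)$ is precisely the classical obstruction $\ob(\G_{0})$, in the sense of \cite[Proposition 3.13]{T98}, to extending the $B_{0}$-flat sheaf $\G_{0}$ on $X\times B_{0}$ to a $B$-flat sheaf on $X\times B$ (the square-zero situation, ideal $I$ with $I^{2}=0$). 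Granting this, the same computation as in the proof of Lemma \ref{tech} --- using that $\q$ is affine, $R^{i}\q_{*}=0$ for $i>0$, and $\q_{*}\G_{0}$ remains $B_{0}$-flat --- yields
$$\q_{*}\bigl(\alpha^{*}\varpi(f)\bigr)=\q_{*}\,\ob(\G_{0})=\ob(\q_{*}\G_{0})\in\Ext^{2}_{S\times B_{0}}(\q_{*}\G_{0},\q_{*}\G_{0}\otimes p^{*}I),$$
the obstruction to extending $\q_{*}\G_{0}$ to a $B$-flat sheaf on $S\times B$.

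Finally I would trace this out. By \cite[Theorem 3.23]{T98}, $\tr\bigl(\ob(\q_{*}\G_{0})\bigr)$ is the obstruction to extending the line bundle $\det(\q_{*}\G_{0})$ over $S\times B$, and this obstruction vanishes: the Picard scheme $\underline{\pic}_{S/\C}$ is smooth over $\C$ by Cartier's theorem (characteristic zero), so the classifying $B_{0}$-point of $\underline{\pic}_{S/\C}$ extends across the square-zero thickening $B_{0}\subset B$, i.e.\ line bundles on $S\times B_{0}$ are unobstructed over $B$ --- note it is the smoothness of $\underline{\pic}_{S/\C}$, not the (generally nonzero) group $H^{2}(\O_{S})$, that is relevant here. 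Hence $\tr\bigl(\q_{*}(\alpha^{*}\varpi(f))\bigr)=0$, which is exactly the claim $\q_{*}(\alpha^{*}\varpi(f))\in\Ext^{2}(\q_{*}\G_{0},\q_{*}\G_{0}\otimes p^{*}I)_{0}$.

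I expect the first step to be the only genuine difficulty: matching the obstruction class produced by the Atiyah-class/Behrend--Fantechi machinery with the concrete sheaf-deformation obstruction $\ob(\G_{0})$ of \cite{T98}, so that the computation of Lemma \ref{tech} applies verbatim. An alternative that avoids this identification, but is no simpler, is to trace the truncated Atiyah class directly: $\tr A(\q_{*}\G_{0})$ is the Atiyah (equivalently Kodaira--Spencer) class of the classifying map $B_{0}\to\underline{\pic}_{S/\C}$, and pairing it with $\varpi(f)$ gives the obstruction to lifting that map, which vanishes again by smoothness of $\underline{\pic}_{S/\C}$; this route instead demands checking the functoriality of truncated Atiyah classes under $\q_{*}$.
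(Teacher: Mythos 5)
Your overall skeleton --- push the class forward along $\q$, identify the result with the obstruction class of $\q_*\G_0$, then kill its trace using unobstructedness of line bundles --- matches the paper's, and your final step (including the observation that it is smoothness of the Picard scheme, not vanishing of $H^2(\O_S)$, that matters) agrees with what the paper does via \cite[Theorem 3.23]{T98}. The gap is in the middle step, which you yourself flag as ``the only genuine difficulty'' and then do not resolve: the identity $\q_*(\alpha^*\varpi(f))=\ob(\q_*\G_0)$ cannot be obtained by running the proof of Lemma \ref{tech} verbatim. That proof is tied to the Li--Tian presentation of the obstruction class as $\partial(e)$, where $e\in\Ext^1(\G_0,\G\otimes\nn)$ is the extension class of an already-given intermediate flat extension $\G$ over $B$ and $\partial$ is a connecting homomorphism; naturality of $\partial$ under $\q_*$ then does all the work. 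In the Behrend--Fantechi situation of Lemma \ref{tech2} there is no intermediate extension: $B_0\subset B$ is a single square-zero thickening (the degenerate triple $\nn=0$, $\mm=\jj=I$, for which the class $e$ vanishes and the recipe collapses), and $\alpha^*\varpi(f)$ is instead defined as the pairing of the truncated Atiyah class of $\eEb$ with the Kodaira--Spencer class of $B_0\subset B$. So even after you ``grant'' that $\alpha^*\varpi(f)$ is the obstruction to deforming $\G_0$, you still owe a proof that this Atiyah-class construction commutes with $\q_*$; merely being an obstruction class (vanishing if and only if a flat extension exists) does not pin the class down, so no soft argument identifies $\q_*(\alpha^*\varpi(f))$ with the obstruction class of $\q_*\G_0$.

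The paper closes exactly this gap by a different mechanism: it invokes the Huybrechts--Thomas universal obstruction class $\varpi_X\in\Ext^2_{X_{B_0}\times X_{B_0}}(\O_{\Delta_{X_{B_0}}}, i_{X*}(\bp^*I))$, proves $\varpi_X=\widetilde{\q}^*\varpi_S$ from $H_X=\widetilde{\q}^*H_S$ (using the Cartesian square relating $j_X$ and $j_S$ and the flatness of $\q$), and then applies the projection formula to the Fourier--Mukai action to get $\q_*\varpi_X(\G_0)=\varpi_S(\q_*\G_0)$, together with the identification $\varpi_X(\G_0)=\alpha^*\varpi(f)$ from \cite[Cor 3.4]{HT10}. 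Your ``alternative'' route at the end --- checking functoriality of truncated Atiyah classes under $\q_*$ --- is in fact much closer to what is actually required, but you defer that check as well; as written, the proposal does not contain a proof of the key compatibility.
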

\begin{proof} Define $$X_{B_0}:=X\times B_0,\quad  X_{B}:=X\times B,\quad S_{B_0}:=S\times B_0,\quad S_{B}:=S\times B.$$ 
Let $i_{X}: X_{B_0} \hookrightarrow X_{B_0}\times X_{B_0}$ and $i_{S}: S_{B_0} \hookrightarrow S_{B_0}\times S_{B_0}$ be the diagonal embeddings,  and $j_X: X_{B_0}\times X_{B_0} \hookrightarrow X_{B_0}\times X_B$ and $j_S: S_{B_0}\times S_{B_0} \hookrightarrow S_{B_0}\times S_B$ be the natural inclusions. Then define $$H_X:=\dL j_X^*\; j_{X*} \O_{\Delta_{ X_{B_0}}},\qquad H_S:=\dL j_S^*\; j_{S*} \O_{\Delta_{ S_{B_0}}}.$$ 
Using  the cartesian diagram 
$$\xymatrix{X_{B_0}\times X_{B_0} \ar[d]^-{\widetilde{\q}} \ar@{^(->}[r]^-{j_X} & X_{B_0}\times X_{B} \ar[d]^-{\widetilde{\q}}\\ 
S_{B_0}\times S_{B_0}  \ar@{^(->}[r]^-{j_S} & S_{B_0}\times S_{B},}$$ where $\widetilde{\q}:=(\q,\q)$, the fact $\O_{\Delta_{X_0}} =\widetilde{\q}^*\O_{\Delta_{S_0}}$, and flatness of $\q$ and hence $\widetilde{\q}$, we get  
\begin{equation} \label{qtilde} H_X=\widetilde{\q}^* H_S. \end{equation}
Huybrechts and Thomas define the universal obstruction class (\cite[Definition 2.8]{HT10}) $$\varpi_X:=\varpi(X_B/X_{B_0})\in \Ext^2_{X_{B_0}\times X_{B_0}}(\O_{\Delta_{ X_{B_0}}}, i_{X*}(\bp^*I))$$ as given by the extension class of the exact triangle $$ i_{X*}(\bp^*I)[1]\cong h^{-1}(H_X)[1]\to \tau^{\ge -1}(H_X)\to h^0(H_X)\cong \O_{\Delta_{ X_{B_0}}},$$ in which the first isomorphism is established in \cite[Lemma 2.2]{HT10} and the second isomorphism is given by the adjunction. The  universal obstruction class $$\varpi_S:=\varpi(S_B/S_{B_0})\in \Ext^2_{S_{B_0}\times S_{B_0}}(\O_{\Delta_{ S_{B_0}}}, i_{S*}(p^*I))$$ is defined similarly by using $H_S$ instead of $H_X$. By \eqref{qtilde} we have
\begin{equation}\label{varpiq}  \varpi_X=\widetilde{\q}^* \varpi_S.\end{equation}
Thinking of $\varpi_X$ and $\varpi_S$ as Fourier-Mukai kernels, and acting them respectively on  $\G_0$ and $\q_*\G_0$, by  \cite[Thm 2.9, Cor 3.4]{HT10} we obtain the obstruction classes  
$$\varpi_X(\G_0)\in \Ext^2_X(\G_0,\G_0\otimes \bp^*I),\quad \varpi_S(\q_* \G_0) \in \Ext^2_S(\q_*\G_0,\q_*\G_0\otimes p^* I).$$ for deforming these sheaves. By \eqref{varpiq} and the commutative diagram 
$$\xymatrix{X_{B_0} \ar[d]^-{\q}& \ar[l]_-{\pr_1} X_{B_0}\times X_{B_0} \ar[d]^-{\widetilde{\q}} \ar[r]^-{\pr_2} & X_{B_0} \ar[d]^-{\q}\\ 
S_{B_0} & \ar[l]_-{\pr_1} S_{B_0}\times S_{B_0}  \ar[r]^-{\pr_2} & S_{B_0},}$$ where $\pr_1, \pr_2$ are obvious projections to the 1st and 2nd factors, an  application of projection formula gives  
\begin{align}\label{XBSB}\q_* \varpi_X(\G_0)&= \q_*\pr_{2*}(\pr_1^*\G_0 \otimes \varpi_X)\\ \notag
&=\q_*\pr_{2*}(\pr_1^*\G_0 \otimes \widetilde{\q}^*\varpi_S)=\pr_{2*}\widetilde{\q}_*(\pr_1^*\G_0 \otimes \widetilde{q}^*\varpi_S)\\ \notag
&=\pr_{2*}(\widetilde{\q}_*\pr_1^*\G_0 \otimes \varpi_S)  =\pr_{2*}(\pr_1^* \q_*\G_0 \otimes \varpi_S)=\varpi_S(\q_*\G_0). \end{align}

But $\varpi_X(\G_0)=\alpha^*\varpi(f)$ by \cite[Cor 3.4]{HT10} and \cite[Thm 4.5]{BF97} as we already know that $\alpha$ is an obstruction theory. So by \eqref{XBSB} we get 
\begin{equation}\label{varpiqG0} \q_*(\alpha^*\varpi(f))=\varpi_S(\q_*\G_0).  \end{equation}

 By \cite[Theorem 3.23]{T98}, 
the obstruction for deforming the line bundle $\det (q_* \G_0)$ is given by the trace of the obstruction class: \begin{equation} \label{tracezero} \tr(\varpi_S(\q_*\G_0)).\end{equation} However, there are no obstructions for deforming line bundles, and therefore \eqref{tracezero} vanishes, or equivalently $$\varpi_S(\q_*\G_0)\in  \Ext^2_S(\q_*\G_0,\q_*\G_0\otimes p^*
I)_0.$$ Now lemma follows from \eqref{varpiqG0}.

\end{proof}

\begin{rmk} \label{redob} Note that by construction  $\rank(\Eb_{\red})=\rank(\Eb)+p_g(S)$. In particular, when $p_g(S)=0$, we have $\Eb=\Eb_{\red}$. Moreover, the reduction that takes $\Eb$ to $\Eb_{\red}$ only affects the fixed parts of the virtual tangent bundles i.e. $\Ebm=\Ebm_{\red}$. 
\end{rmk}

By  Theorem \ref{prop:red} and \cite{GP99}  we get
\begin{cor}
$\Ebf_{\red}$ gives a perfect obstruction theory over $\mMb_h(v)^{\C^*}$, and hence a virtual fundamental class $$[\mMb_h(v)^{\C^*}]^{
\vir}_{\red}\in A_{*}(\mMb_h(v)^{\C^*}).$$
\end{cor}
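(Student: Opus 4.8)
The plan is to deduce the corollary directly from Theorem~\ref{prop:red} together with the Graber--Pandharipande machinery \cite{GP99}. The first step is to recall what Theorem~\ref{prop:red} gives us: the complex $\Eb_{\red}$, whose dual is $\tau^{\le 1}(C^\bullet)$, is a perfect obstruction theory on the whole moduli space $\mMb_h(v)$, and by Lemma~\ref{perfect} it is of perfect amplitude contained in $[-1,0]$. Being a $\C^*$-equivariant object (it is built out of the equivariant universal sheaf $\eEb$, the equivariant morphism $\q$, the trace map and the truncation functors, all of which are $\C^*$-equivariant), $\Eb_{\red}$ decomposes into its fixed and moving parts, $\Eb_{\red}=\Ebf_{\red}\oplus\Ebm_{\red}$, and restricting the obstruction-theory map $\theta$ to the fixed locus and taking fixed parts yields a morphism $\Ebf_{\red}\to\LL_{\mMb_h(v)^{\C^*}}$. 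The second step is to invoke \cite[Proposition~1]{GP99}, which says precisely that the fixed part of a $\C^*$-equivariant perfect obstruction theory is itself a perfect obstruction theory on the fixed locus; this is what turns $\Ebf_{\red}$ into a genuine perfect obstruction theory over $\mMb_h(v)^{\C^*}$.

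The third step is to check that $\mMb_h(v)^{\C^*}$ is a proper (in fact projective) Deligne--Mumford stack, so that the Behrend--Fantechi construction \cite{BF97} applies to produce a virtual fundamental class in $A_*(\mMb_h(v)^{\C^*})$ from this perfect obstruction theory. This properness is already observed in the text: the $\C^*$-fixed locus of $X=\cL$ is the zero section $S$, which is projective, and the fixed sheaves are scheme-theoretically supported on thickenings of $S$ along the fibres, so $\mMb_h(v)^{\C^*}$ is a closed subscheme of a moduli space of sheaves on a projective scheme, hence projective. Defining $[\mMb_h(v)^{\C^*}]^{\vir}_{\red}$ to be the resulting class completes the argument.

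I do not expect any real obstacle here: essentially all the substance has already been carried out in Lemma~\ref{perfect} and Theorem~\ref{prop:red}, and the corollary is a formal consequence. The only point requiring a word of care is the equivariance of the construction --- one must note that every ingredient entering the definition of $C^\bullet$ and of the map $\theta$ (the universal sheaf, $\q$, $\tr$, $\dR p_*$, the truncation functors, and the Atiyah-class map of \cite{HT10}) is naturally $\C^*$-equivariant, so that $\Eb_{\red}$ and $\theta$ live in the equivariant derived category and \cite{GP99} genuinely applies. Given that, the passage to fixed parts and the existence of the reduced virtual class are immediate.
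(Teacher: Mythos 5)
Your proposal is correct and follows exactly the paper's (very terse) argument: the corollary is deduced from Theorem~\ref{prop:red} together with \cite{GP99}, using the projectivity of $\mMb_h(v)^{\C^*}$ (already noted in the text when the unreduced class $[\mMb_h(v)^{\C^*}]^{\vir}$ was constructed) to get an actual virtual fundamental class. The extra details you supply --- the $\C^*$-equivariance of every ingredient of $C^\bullet$ and of $\theta$, and the passage to fixed parts --- are exactly the points the paper leaves implicit.
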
 \qed


In the rest of the paper, we will study the invariants defined below:

\begin{defi} \label{DThv4}
We can define two types of DT invariants
\begin{align*}
\DT^{\cL}_h(v;\alpha)&:=
\int_{[\mMb_h(v)^{\C^*}]_{\red}^{\vir}}
\frac{\alpha}{e((E^{\bullet,\mov})^\vee)}\in \mathbb{Q}[\s,\s^{-1}] \quad \quad \alpha \in H^{*}_{\C^*}(\mMb_h(v),\mathbb{Q})_{\s}, \\
\DT^{\cL}_h(v)&:=
\int_{[\mMb_h(v)^{\C^*}]_{\red}^{\vir}} 
c((E^{\bullet,\fix}_{\red})^\vee) \in \ZZ. 
\end{align*} 
Here $e(-)$ denotes the equivariant Euler class, $\s$ is the equivariant parameter, and $c(-)$ denotes the total Chern class. Note that $\Ebm=\Ebm_{\red}$ by Remark \ref{redob}.
\end{defi}

\begin{rmk}  \label{alpha1}
The invariant $\DT_h^{\cL}(v;\alpha)$ is the reduced version of the invariant $\widehat{\DT^{\cL}_h}(v;\alpha)$ given in \eqref{DThv3}. If $\alpha=1$ then it can be seen easily that 
$$\DT^{\cL}_h(v;1)\cdot \s^{\rank(\Eb_{\red})} \in \mathbb{Q},$$ where $\rank(\Eb_{\red})$ is given by \eqref{rankE} and \eqref{3fold}. In particular, if $\cL=\omega_S$ then $\rank(\Eb_{\red})=p_g(S)$.
\end{rmk}

\begin{rmk}\label{virnum}The definition of the invariant $\DT^{\cL}_h(v)$ is motivated by Fantechi-G\"ottsche's virtual 
Euler characteristic  ~\cite{FG10}. $\DT^{\cL}_h(v)$ is the virtual Euler number of $\mMb_h(v)^{\C^*}$:
$$\DT^{\cL}_h(v)=\chi^{\vir}(\mMb_h(v)^{\C^*}).$$
If $\mMb_h(v)^{\C^*}$ is nonsingular with 
expected dimension, then $\DT^{\cL}_h(v)$ coincides with the
topological Euler characteristic of $\mMb_h(v)^{\C^*}$. 
\end{rmk}

\subsection{Vafa-Witten invariants} \label{vafawitten}
Motivated by Vafa-Witten equation and S-duality conjecture \cite{VW94}, Tanaka and Thomas \cite{TT} define Vafa-Witten invariants by constructing a symmetric perfect obstruction theory over the moduli space of Higgs pairs $(G,\phi)$ on $S$ such that $\tr (\phi )=0$.\footnote{They also fix the determinant of $G$, but by our  $H^1(\O_S)=0$ assumption in this paper, this has no effects here.}
By \cite[Prop 2.2, Lem 2.9]{TT} the moduli space of Higgs pairs is isomorphic to one of our moduli spaces $\mMw_h(v)$.

The moduli space of Higgs pairs is equipped with a $\C^*$-action obtained by scaling $\phi$. This is equivalent to the $\C^*$-action on $\mMw_h(v)$ via the identification above. Over the fixed locus of the moduli space of Higgs pairs the trace of $\phi$ is automatically zero (see \cite[Sections 7.1, 7.3]{TT}), as a result the fixed locus of Tanaka-Thomas' moduli space is identified with  $\mMw_h(v)^{\C^*}$.

The fixed part of Tanaka-Thomas' obstruction theory is equivalent in K-theory to $(\Ebf_{\red})^\vee$ and the moving parts differ (in K-theory)  by the trivial bundle of rank $p_g(S)$ carrying a $\C^*$-action along its fibers. This can be seen by a comparison with \cite[Thm 6.1, Cor 3.18]{TT} (see also Remark \ref{redob} above) as follows. In fact, if the virtual tangent bundle of Tanaka-Thomas theory is denoted by $(E^\bullet_{\perp})^\vee$ then, in K-theory (see also (1.7) in \cite{TT})
  $$\ext^{i+1}_{\bp}(\eEb,\eEb)= h^i((E^\bullet_{\perp})^\vee)+R^i p_* \omega_S + R^{i+1}p_* \O, \qquad i=0,1.$$
Comparing with Lemma \ref{perfect} and using our assumption $H^1(\O_S)=0$, we get in K-theory 
$$h^0((E^\bullet_{\red})^\vee)=h^0((E^\bullet_{\perp})^\vee)+p_* \omega_S,\qquad h^1((E^\bullet_{\red})^\vee)=h^1((E^\bullet_{\perp})^\vee).$$ Since $\omega_S$ carries a non-trivial $\C^*$-weight, the term $p_* \omega_S$ is trivial of rank $p_g(S)$ and contributes only in the moving part of our obstruction theory and so our claim is proven.

 In particular, the resulting virtual fundamental classes on the fixed loci of both moduli spaces in two papers coincide (because the virtual fundamental class only depends on the K-theory classes of the virtual tangent bundles).

  Tanaka and Thomas define Vafa-Witten invariants $\VW_h(v)\in \mathbb{Q}$ by taking the $\C^*$-equivariant residue of the class of $1$.
They have computed the invariants $\VW_h(v)$ in some interesting examples and express the generating functions of the invariants of certain components of the $\C^*$-fixed locus in terms of algebraic functions.  They were also able to match the invariants  $\VW_h(v)$ (after adding the contributions of all $\C^*$-fixed loci and combining with the calculations in \cite{GK17}) with the few first terms of the modular forms of  \cite{VW94}. Their calculation provides compelling evidence that the invariants $\VW_h(v)$ have modular properties that match with S-duality predictions.
 
By the discussion above about the fixed/moving parts of obstruction theories in this paper and in \cite{TT} we see that if we choose $\alpha=1$ in Definition \ref{DThv4} (see Remark \ref{alpha1}):
$$\DT^{\omega_S}_h(v;1)=\s^{-p_g}\VW_h(v).$$

\section{Description of the fixed locus of moduli space}
We continue this section by giving a precise description of the components of  $\mMb_h(v)^{\C^*}$. Suppose that $\cE$ is a closed point of $\mMb_h(v)^{\C^*}$. Because $\cE$ is a pure $\C^*$-equivariant sheaf, up to tensoring with a power of $\t$, we can assume that, for some partition $\lambda \vdash r$, with $\lambda=(\lambda_1\le\cdots \le \lambda_{\ell(\lambda)})$, we have $$\q_* \cE=\bigoplus_{i=0}^{\ell(\lambda)-1} E_{-i}\otimes \t^{-i},$$ where $E_{-i}$ is a rank $\lambda_{i+1}$ torsion free sheaf on $S$, and the $\O_X$-module structure on $\cE$ is given by a collection of injective maps of $\O_S$-modules (using \eqref{piox}):
$$\psi_i: E_{-i} \to E_{-i-1}\otimes \cL, \quad  \quad i=0,\dots, \ell(\lambda)-1.$$
Let $\cE_i:=z_*E_{-i}$, for any $i$ and let $\cE'_0:=\cE$. Define $\cE'_i$ for $i >0$ inductively by \begin{equation} \label{sescei} 0\to \cE'_{i+1}\otimes \t^{-1}\to \cE'_i \to \cE_i\to 0.\end{equation} Therefore, we get a filtration (forgetting the equivariant structures) $$\cE'_{\ell(\lambda)-1}\subset \cdots\subset \cE'_1\subset\cE'_0=\cE,$$ and the stability of $\cE$ imposes the following conditions:
\begin{equation}\label{stabs} \mu_h(\cE'_i)<\mu_h(\cE) \quad \quad i=1,\dots, \ell(\lambda).\end{equation}
Note that for all $j$ we have $$\q_*\cE'_j\otimes \t^{-j}=\bigoplus_{i=j}^{\ell(\lambda)-1} E_{-i}\otimes \t^{-i},$$ and hence \eqref{stabs} imposes some restrictions on the ranks and degrees of $E_{-i}$'s.


This construction also works well for the $B$-points of the moduli space $\mMb_h(v)^{\C^*}$ for any $\C$-schemes $B$. As a result, one gets a decomposition of the $\mathbb{C}^{\ast}$-fixed locus 
$\mMb_h(v)^{\mathbb{C}^{\ast}}$ into connected components
$$\mMb_h(v)^{\C^*}=\coprod_{\lambda \vdash r}\mMb_h(v)^{\C^*}_{\lambda},$$ 
where in the level of the universal families
\begin{align}\label{decompos} &\q_*\left(\eEb|_{X\times \mMb_h(v)^{\C^*}_{\lambda}}\right)=\bigoplus_{i=0}^{\ell(\lambda)-1} \eE_{-i}\otimes \t^{-i},\\ \notag
&\Psi_i: \eE_{-i} \to \eE_{-i-1}\otimes \cL, \quad  \quad i=0,\dots, \ell(\lambda)-1,\\ \notag
& \eEb'_{\ell(\lambda)-1}\subset \cdots\subset \eEb'_1\subset\eEb'_0:=\eEb,
\end{align} 
in which $\eE_{-i}$ is a flat family\footnote{Since $\q$ is an affine morphism, $ \q_*\eEb$ is flat over $\mMb_h(v)^{\C^*}_{\lambda}$, and hence each weight space $\eE_{-i}$ is flat over $\mMb_h(v)^{\C^*}_{\lambda}$.} of
rank $\lambda_i$ torsion free sheaves on $S\times \mMb_h(v)^{\C^*}_{\lambda}$, $\Psi_i$ is a family of fiberwise injective maps over $\mMb_h(v)^{\C^*}_{\lambda}$, $\eEb_i:=z_* \eE_{-i}$,  and $\eEb'_i$ for $i >0$ are inductively defined by \begin{equation}\label{univeses} 0\to \eEb'_{i+1}\otimes \t^{-1}\to \eEb'_i \to \eEb_i\to 0.\end{equation}




In the rest of the paper, we only study the two extreme cases $\lambda=(r)$ and $\lambda=(1^r)$. By the construction, it is clear that the former case coincides set theoretically with the moduli space $\mM_h(v)$; as we will see in the next section the latter case is related to the nested Hilbert schemes on $S$.  Note that when $r=2$, these cases are the only possibilities, and hence 
\begin{prop} \label{virdec} Suppose that $r=2$, then
$$[\mMb_h(v)^{\C^*}]^{\vir}_{\red}=[\mMb_h(v)^{\C^*}_{(2)}]^{\vir}_{\red}+[\mMb_h(v)^{\C^*}_{(1^2)}]^{\vir}_{\red}$$  \end{prop} \qed 

\subsection{Moduli space of stable torsion free sheaves as fixed locus}
\begin{notn} We sometimes use $-\cdot \t^a$ instead of $-\otimes \t^a$ to make the formulas shorter. We also let $\s=c_1(\t)$.
\end{notn} 
\begin{prop} \label{(r)}
 We have the isomorphism of schemes $\mMb_h(v)_{(r)}^{\C^*}\cong \mM_h(v)$. Moreover, under this identification, we have the following isomorphisms 
\begin{align*}
\Ebf_{\red}|_{\mMb_h(v)^{\C^*}_{(r)}}
&\cong
\big(\dR \hom_{p}(\eE, \eE)_0[1]\big)^{\vee},\\
\Ebm|_{\mMb_h(v)^{\C^*}_{(r)}}
&\cong
\big(\tau^{\le 1}\dR \hom_{p}(\eE, \eE\otimes \cL\cdot \t)\big)^{\vee}.
\end{align*}
In particular,  $\Ebf_{\red}|_{\mMb_h(v)^{\C^*}_{(r)}}$ is identified with the natural perfect trace-free obstruction theory over $\mM_h(v)$, and hence $[\mMb_h(v)^{\C^*}_{(r)}]^{\vir}_{\red}=[\mM_h(v)]^{\vir}_0$.
\end{prop}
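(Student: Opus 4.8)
The plan is to establish the isomorphism $\mMb_h(v)^{\C^*}_{(r)}\cong\mM_h(v)$ on the level of functors of points and then identify the fixed and moving parts of the reduced obstruction theory by a derived‑category computation. For the isomorphism, note that for $\lambda=(r)$ one has $\ell(\lambda)=1$, so by the analysis preceding the proposition a $\C^*$‑fixed family over a $\C$‑scheme $B$ lying in this component is just its weight‑$0$ piece $\eE_0=\q_*\eEb$, a $B$‑flat family of rank $r$ torsion free sheaves on $S\times B$ carrying no maps $\Psi_i$, with the filtration \eqref{univeses} degenerating to $\eEb=z_*\eE_0$. Conversely, for any $B$‑flat family $\F$ of rank $r$ torsion free sheaves on $S\times B$ the sheaf $z_*\F$ on $X\times B$ is $\C^*$‑fixed, has compact support over $B$, and satisfies $\q_*(z_*\F)=\F$. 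Since $\mu_h$‑stability of a compactly supported sheaf $\cE$ on $X$ is by definition $\mu_h$‑stability of $\q_*\cE$ on $S$ and Chern characters match, the assignments $\eEb\mapsto\q_*\eEb$ and $\F\mapsto z_*\F$ are mutually inverse natural transformations identifying the two functors of points, hence the schemes; the universal families correspond under $z_*$ and $\q_*$.

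For the obstruction theories, recall $X=\cL\xrightarrow{\q}S$ with zero section $z$, a codimension‑one regular embedding whose normal bundle, read off from \eqref{piox} (the fibre coordinate is a section of $\cL^{-1}\cdot\t^{-1}$), is $N_{S/X}\cong\cL\cdot\t$, of $\C^*$‑weight $+1$. Under the identification above $\eEb|_{X\times\mM_h(v)}=z_*\eE$. I would then compute, over $\mM_h(v)$,
\[
\dR\hom_{\bp}(\eEb,\eEb)|_{\mM_h(v)}\;\cong\;\dR\hom_p(\eE, z^{!}z_*\eE)\;\cong\;\dR\hom_p(\eE,\eE)\ \oplus\ \dR\hom_p(\eE,\eE\otimes\cL\cdot\t)[-1],
\]
the first summand of $\C^*$‑weight $0$ and the second of weight $+1$. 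Here the first isomorphism is the adjunction for the closed immersion $z$ combined with $\q z=\id_S$, and the second uses $z^{!}z_*\eE\cong\eE\oplus(\eE\otimes N_{S/X})[-1]$, which follows from $\dL z^{*}z_*\eE\cong\eE\oplus(\eE\otimes N^{\vee}_{S/X})[1]$ for the divisorial embedding $z$, whose defining triangle splits $\C^*$‑equivariantly because its two terms lie in distinct $\C^*$‑weights. Both summands are perfect of amplitude $[0,2]$ by cohomology and base change.

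It remains to feed this into the definitions. For the moving part, passing to the $\C^*$‑moving part commutes with $\tau^{[1,2]}$, with shifts, and with restriction, so
\[
(\Ebm|_{\mM_h(v)})^{\vee}\;=\;\tau^{[1,2]}\!\big(\dR\hom_p(\eE,\eE\otimes\cL\cdot\t)[-1]\big)[1]\;=\;\tau^{\le 1}\dR\hom_p(\eE,\eE\otimes\cL\cdot\t),
\]
the last equality because $\dR\hom_p(\eE,\eE\otimes\cL\cdot\t)$ is perfect in $[0,2]$, so that $\tau^{[1,2]}$ of its shift deletes only the degree‑three cohomology; by Remark~\ref{redob} this also computes $\Ebm_{\red}|_{\mM_h(v)}$. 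For the fixed part, restrict the splitting \eqref{splittt} and combine it with the trace splitting $\dR\hom_p(\eE,\eE)\cong\dR p_*\O\oplus\dR\hom_p(\eE,\eE)_0$ (valid since $r$ is invertible) and the display above; cancelling the common $\dR p_*\O$ summand yields $C^{\bullet}|_{\mM_h(v)}\cong\dR\hom_p(\eE,\eE)_0[1]\oplus\dR\hom_p(\eE,\eE\otimes\cL\cdot\t)$, whose $\C^*$‑weight‑$0$ part is $\dR\hom_p(\eE,\eE)_0[1]$. Because $h^{2}(C^{\bullet})=\ext^{3}_{\bp}(\eEb,\eEb)$ either vanishes (when $\cL\neq\omega_S$) or has nonzero $\C^*$‑weight (when $\cL=\omega_S$, where $\omega_X\cong\t^{-1}$ gives $\ext^{3}_{\bp}(\eEb,\eEb)\cong\hom_{\bp}(\eEb,\eEb)^{\vee}\cdot\t\cong\O_{\mM_h(v)}\cdot\t$ by relative Serre duality), one has $(\Ebf_{\red})^{\vee}=(\Eb_{\red})^{\vee,\fix}=(\tau^{\le 1}C^{\bullet})^{\fix}=(C^{\bullet})^{\fix}$, and restricting gives $(\Ebf_{\red}|_{\mM_h(v)})^{\vee}\cong\dR\hom_p(\eE,\eE)_0[1]$. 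Finally, the map $\Eb_{\red}\to\LL_{\mMb_h(v)}$ constructed in the proof of Theorem~\ref{prop:red} from the truncated Atiyah class of $\eEb$ restricts, via $\eEb|_{X\times\mM_h(v)}=z_*\eE$ and functoriality of the Atiyah class under $z_*$, to the standard map $(\dR\hom_p(\eE,\eE)_0[1])^{\vee}\to\LL_{\mM_h(v)}$ that defines $[\mM_h(v)]^{\vir}_0$; hence the two perfect obstruction theories coincide and $[\mMb_h(v)^{\C^*}_{(r)}]^{\vir}_{\red}=[\mM_h(v)]^{\vir}_0$.

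The main obstacle is the homological bookkeeping: pinning down the $\C^*$‑weights of $N_{S/X}$ and of $\ext^{3}_{\bp}(\eEb,\eEb)$ so that ``fixed'' and ``moving'' land where the statement asserts, and justifying that the truncations $\tau^{[1,2]}$ and $\tau^{\le 1}$ commute with restriction to the fixed locus and with the weight decomposition — which is harmless here only because each complex in play is a $\C^*$‑graded direct sum whose graded pieces already have the expected perfect amplitude. Checking that the restricted obstruction‑theory map is literally the standard Atiyah‑class map on $\mM_h(v)$ is routine given the explicit construction recalled in the proof of Theorem~\ref{prop:red}, but deserves to be written out.
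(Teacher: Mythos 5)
Your proposal is correct and follows essentially the same route as the paper: the scheme isomorphism comes from the description of the $\lambda=(r)$ component (the paper phrases this via the universal family $\eEb|\cong z_*(\eE\boxtimes\N)$ for a line bundle $\N$ on $\mM_h(v)$, which your $z_*/\q_*$ correspondence recovers up to that harmless twist), and the splitting of $\dR\hom_{\bp}(\eEb,\eEb)$ into weight-$0$ and weight-$1$ pieces is obtained in the paper from the triangle $\eE\otimes\cL^{-1}\cdot\t^{-1}[1]\to\dL z^*z_*\eE\to\eE$ of \cite[Cor.~11.4]{H06} plus adjunction, which is the left-adjoint form of your $z^!z_*\eE\cong\eE\oplus\eE\otimes N_{S/X}[-1]$ computation. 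Your extra care with the weight of $\ext^3_{\bp}(\eEb,\eEb)$, the cancellation of $\dR p_*\O$ in the reduction, and the Atiyah-class compatibility only makes explicit what the paper leaves implicit.
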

\begin{proof} 
The first claim follows by the description above and noting that in this case \eqref{decompos} and \eqref{univeses} give
$$\eEb|_{X\times \mMb_h(v)^{\C^*}_{(r)}}\cong z_*(\eE \boxtimes \N), \quad \quad \q_*\left(\eEb|_{X\times \mMb_h(v)^{\C^*}_{(r)}}\right)=\eE\boxtimes \N,$$ for a line bundle $\N$ on $\mM_h(v)$, by the universal properties of the moduli spaces. For the second part, 
by  \cite[Corollary 11.4]{H06}, we have the following natural exact triangle $$ \eE\boxtimes \N \otimes \cL^{-1}\otimes \t^{-1} [1]\to \dL z^{*}\eEb \to \eE\boxtimes \N,$$ which implies, by adjunction,  the exact triangle $$z_*\,\dR \hom(\eE, \eE)\to \dR \hom(\eEb, \eEb) \to z_*\,\dR \hom(\eE, \eE\otimes \cL\cdot \t)[-1].$$ 
Taking the trace free part, shifting by 1, pushing forward, dualizing, and taking the $\C^*$-fixed part of this exact triangle, we get the first isomorphism; pushing forward, applying the truncation $\tau{\le 1}$, and taking the $\C^*$-moving part of this exact triangle, we get the second isomorphism.
\end{proof}
\begin{cor} \label{int(r)} 
\begin{align*}
\DT_h^{\cL}(v;\alpha)_{(r)}&=\int_{[\mM_h(v)]^{\vir}_0}\frac{\s^\kappa\cdot \alpha}{e\left(\dR \hom_{p}(\eE, \eE\otimes \cL \cdot \t)\right)},\\
\DT_h^{\cL}(v)_{(r)}=\chi^{\vir}(\mM_h(v))&=\int_{[\mM_h(v)]^{\vir}_0}c_d\left(\dR \hom_{p}(\eE, \eE)_0[1]\right)\\&=\int_{[\mM_h(v)]^{\vir}_0}c_d\left(\ext^1_{p}(\eE,\eE)-\ext^2_{p}(\eE, \eE)_0\right),
\end{align*}
where $d$ is the virtual dimension of $\mM_h(v)$, and  $\kappa=1$ if $\cL=\omega_S$, otherwise $\kappa=0$.
\end{cor}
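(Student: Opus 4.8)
The plan is to derive both identities by restricting the integrals of Definition \ref{DThv4} to the component $\mMb_h(v)^{\C^*}_{(r)}$ and then applying Proposition \ref{(r)}. By definition $\DT^{\cL}_h(v;\alpha)_{(r)}=\int_{[\mMb_h(v)^{\C^*}_{(r)}]^{\vir}_{\red}}\alpha/e((\Ebm)^\vee)$ and $\DT^{\cL}_h(v)_{(r)}=\int_{[\mMb_h(v)^{\C^*}_{(r)}]^{\vir}_{\red}}c((\Ebf_{\red})^\vee)$. Proposition \ref{(r)} identifies $\mMb_h(v)^{\C^*}_{(r)}$ with $\mM_h(v)$, identifies $[\mMb_h(v)^{\C^*}_{(r)}]^{\vir}_{\red}$ with $[\mM_h(v)]^{\vir}_0$, and, after dualizing the stated isomorphisms, gives $(\Ebf_{\red})^\vee\cong\dR\hom_p(\eE,\eE)_0[1]$ and $(\Ebm)^\vee\cong\tau^{\le 1}\dR\hom_p(\eE,\eE\otimes\cL\cdot\t)$. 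So it remains to check that $\int_{[\mM_h(v)]^{\vir}_0}\alpha/e(\tau^{\le 1}\dR\hom_p(\eE,\eE\otimes\cL\cdot\t))$ equals the first claimed expression, and that $\int_{[\mM_h(v)]^{\vir}_0}c(\dR\hom_p(\eE,\eE)_0[1])$ equals the second.

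The crucial computation is the equality $e(\ext^2_p(\eE,\eE\otimes\cL\cdot\t))=\s^\kappa$. Since $\cE$ is a coherent sheaf on the nonsingular surface $S$, $\Ext^i_S(\cE,\cE\otimes\cL)=0$ for $i>2$, so by base change $\dR\hom_p(\eE,\eE\otimes\cL\cdot\t)$ has cohomology concentrated in degrees $[0,2]$; hence in $K$-theory $[\dR\hom_p(\eE,\eE\otimes\cL\cdot\t)]=[\tau^{\le 1}\dR\hom_p(\eE,\eE\otimes\cL\cdot\t)]+[\ext^2_p(\eE,\eE\otimes\cL\cdot\t)]$, and the equality above then gives $e(\tau^{\le 1}\dR\hom_p(\eE,\eE\otimes\cL\cdot\t))=\s^{-\kappa}\,e(\dR\hom_p(\eE,\eE\otimes\cL\cdot\t))$, which is the first formula. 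To prove the equality I would invoke relative Serre duality for $p:S\times\mM_h(v)\to\mM_h(v)$, which gives $\ext^2_p(\eE,\eE\otimes\cL\cdot\t)^\vee\cong\hom_p(\eE,\eE\otimes\omega_S\otimes\cL^{-1}\cdot\t^{-1})$. If $\cL\neq\omega_S$, then $H^0(\cL\otimes\omega_S^{-1})\neq0$ and ampleness of $h$ give $(\cL-\omega_S)\cdot h>0$, so fibrewise a nonzero homomorphism between the stable sheaves $\cE$ and $\cE\otimes\omega_S\otimes\cL^{-1}$, whose slopes differ by $(\omega_S-\cL)\cdot h/r<0$, is impossible; hence $\ext^2_p(\eE,\eE\otimes\cL\cdot\t)=0$ and $e=1=\s^0$. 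If $\cL=\omega_S$, simplicity of stable sheaves gives $\hom_p(\eE,\eE)=\O_{\mM_h(v)}$, hence $\ext^2_p(\eE,\eE\otimes\omega_S\cdot\t)^\vee\cong\O_{\mM_h(v)}\cdot\t^{-1}$, so $\ext^2_p(\eE,\eE\otimes\omega_S\cdot\t)\cong\O_{\mM_h(v)}\cdot\t$ and $e=\s=\s^1$.

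For the $\DT^{\cL}_h(v)_{(r)}$ formula, $\dR\hom_p(\eE,\eE)_0[1]$ is the virtual tangent bundle of the trace-free obstruction theory on $\mM_h(v)$, so its rank equals the virtual dimension $d$ and $\int_{[\mM_h(v)]^{\vir}_0}c(\dR\hom_p(\eE,\eE)_0[1])$ collapses to $\int_{[\mM_h(v)]^{\vir}_0}c_d(\dR\hom_p(\eE,\eE)_0[1])$, which is $\chi^{\vir}(\mM_h(v))$ by the Fantechi--G\"ottsche definition \cite{FG10} (as in Remark \ref{virnum}). The final equality is a $K$-theory rewriting: stability of $\cE$ gives $\ext^0_p(\eE,\eE)_0=0$, and $H^1(\O_S)=0$ forces the trace map $\ext^1_p(\eE,\eE)\to R^1p_*\O_S$ to vanish, so $\ext^1_p(\eE,\eE)_0=\ext^1_p(\eE,\eE)$; thus $[\dR\hom_p(\eE,\eE)_0[1]]=[\ext^1_p(\eE,\eE)]-[\ext^2_p(\eE,\eE)_0]$ and the degree-$d$ Chern classes agree.

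The only substantive point is the Serre-duality plus slope-stability analysis of $\ext^2_p(\eE,\eE\otimes\cL\cdot\t)$ in the second paragraph; everything else is formal bookkeeping with the identifications of Proposition \ref{(r)}, $K$-theory classes, and equivariant Euler and Chern classes.
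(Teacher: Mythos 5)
Your proposal is correct and follows essentially the same route as the paper: restrict Definition \ref{DThv4} to the type I component, apply the identifications of Proposition \ref{(r)}, and reduce the first formula to computing $e\bigl(\ext^2_{p}(\eE,\eE\otimes \cL\cdot \t)\bigr)=\s^{\kappa}$ via Serre duality and stability. The only cosmetic difference is that the paper identifies $\ext^2_{p}(\eE,\eE\otimes\cL\cdot\t)$ with $\bigl(p_{*}(\cL^{-1}\otimes\omega_S\cdot\t^{-1})\bigr)^{*}$ and invokes \eqref{antican}, whereas you kill the $\cL\neq\omega_S$ case by the slope inequality directly (and note the slope difference is $(K_S-c_1(\cL))\cdot h$, not divided by $r$, though the sign and conclusion are unaffected).
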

\begin{proof}
To see the first formula, by Proposition \ref{(r)} we can write
\begin{align*}
\DT_h^{\cL}(v;\alpha)_{(r)}&=
\int_{[\mMb_h(v)_{(r)}^{\C^*}]^{\vir}_{\red}}
\frac{\alpha}{e((E^{\bullet,\mov})^\vee)}\\&=
\int_{[\mM_h(v)]^{\vir}_0}
\frac{\alpha}{e\left(\tau^{\le 1}\dR \hom_{p}(\eE, \eE\otimes \cL\cdot \t)\right)}\\&=
\int_{[\mM_h(v)]^{\vir}_0}
\frac{\s^\kappa\cdot \alpha}{e\left(\dR \hom_{p}(\eE, \eE\otimes \cL\cdot \t)\right)}.
\end{align*}
For the last equality, note that the trace map and Grothendieck-Verdier duality induces $$\ext^2_{p}(\eE,\eE\otimes \cL \cdot \t)\cong \hom_{p}(\eE,\eE\otimes \cL^{-1} \otimes \omega_S\cdot \t^{-1})^*\cong \left(p_{ *}(\cL^{-1}\otimes \omega_S \cdot \t^{-1})\right)^*,$$ and by \eqref{antican}, $p_{ *}(\cL^{-1}\otimes \omega_S \cdot \t^{-1})=0$ unless $\cL=\omega_S$ in which case it is $\O_{\mM}\otimes \t^{-1}$.

The second formula in corollary follows directly from Proposition \ref{(r)}, by noting that $\ext^1_{p}(-,-)_0=\ext^1_{p}(-,-)$ by the assumption $H^1(\O_S)=0$, and that $\hom_{p}(\eE,\eE)_0=0$ by the simplicity of the fibers of $\eE$. 
\end{proof}


\begin{cor}
If $\cL=\omega_S$ and $\alpha=1$ then $\DT^{\omega_S}_h(v;1)_{(r)}=(-1)^{d}\s^{-p_g}\DT^{\omega_S}_h(v)_{(r)}$, where $d$ is the virtual dimension of $\mM_h(v)$.
\end{cor}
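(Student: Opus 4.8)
The plan is to transform the two integral formulas of Corollary~\ref{int(r)}, specialized to $\cL=\omega_S$ (so that $\kappa=1$), into one another. Write $T^{\vir}:=\dR\hom_p(\eE,\eE)_0[1]$ for the virtual tangent bundle of the trace-free obstruction theory on $\mM_h(v)$; its rank is the virtual dimension $d$, and by Corollary~\ref{int(r)} we have $\DT^{\omega_S}_h(v)_{(r)}=\int_{[\mM_h(v)]^{\vir}_0}c_d(T^{\vir})$ and $\DT^{\omega_S}_h(v;1)_{(r)}=\int_{[\mM_h(v)]^{\vir}_0}\s/e(\dR\hom_p(\eE,\eE\otimes\omega_S\cdot\t))$. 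So the whole statement reduces to expressing the equivariant Euler class $e(\dR\hom_p(\eE,\eE\otimes\omega_S\cdot\t))$ through the Chern classes of $T^{\vir}$.

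The main work is a computation in the $\C^*$-equivariant $K$-theory of $\mM_h(v)$. First I would note that, since $\omega_S$ is (pulled back from) a line bundle on $S$, $\dR\hom_p(\eE,\eE\otimes\omega_S\cdot\t)\cong\dR\hom_p(\eE,\eE\otimes\omega_S)\otimes\t$; relative Grothendieck--Serre duality for the smooth projective morphism $p:S\times\mM_h(v)\to\mM_h(v)$ (of relative dimension $2$, with relative canonical bundle $\omega_S$), together with the self-duality of $\dR\hom(\eE,\eE)$ under the composition--trace pairing, gives $\dR\hom_p(\eE,\eE\otimes\omega_S)\cong(\dR\hom_p(\eE,\eE))^{\vee}[-2]$. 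Then, using the trace splitting $\dR\hom_p(\eE,\eE)\cong\dR\hom_p(\eE,\eE)_0\oplus\dR p_*\O_{S\times\mM_h(v)}$, the vanishing $H^1(\O_S)=0$ (so that $[\dR p_*\O_{S\times\mM_h(v)}]=(1+p_g)[\O_{\mM_h(v)}]$), and $[\dR\hom_p(\eE,\eE)_0]=-[T^{\vir}]$, I would arrive at
\begin{equation*}
[\dR\hom_p(\eE,\eE\otimes\omega_S\cdot\t)]=(1+p_g)[\O_{\mM_h(v)}\cdot\t]-[(T^{\vir})^{\vee}\cdot\t]
\end{equation*}
in $\C^*$-equivariant $K$-theory (a rank check via Hirzebruch--Riemann--Roch confirms both sides have the same rank).

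Next I would take equivariant Euler classes of both sides; these are multiplicative and, since $\t$ has weight $1$, invertible in the localized ring. Using $e(\O_{\mM_h(v)}\cdot\t)=\s$ this gives $e(\dR\hom_p(\eE,\eE\otimes\omega_S\cdot\t))=\s^{1+p_g}/e((T^{\vir})^{\vee}\cdot\t)$, and therefore
\begin{equation*}
\frac{\s}{e(\dR\hom_p(\eE,\eE\otimes\omega_S\cdot\t))}=\s^{-p_g}\,e\big((T^{\vir})^{\vee}\cdot\t\big).
\end{equation*}
I would then expand the right-hand side via the elementary identity $e(y\otimes\t)=\sum_{k\ge0}\s^{\rho-k}c_k(y)$, valid for any $K$-theory class $y$ on $\mM_h(v)$ of rank $\rho$ (clear for a bundle by the splitting principle, and extended multiplicatively to virtual classes); taking $y=(T^{\vir})^{\vee}$, of rank $d$ with $c_k((T^{\vir})^{\vee})=(-1)^kc_k(T^{\vir})$, yields $\s/e(\dR\hom_p(\eE,\eE\otimes\omega_S\cdot\t))=\s^{-p_g}\sum_{k\ge0}(-1)^k\s^{d-k}c_k(T^{\vir})$. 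Finally, integrating against $[\mM_h(v)]^{\vir}_0=[\mMb_h(v)^{\C^*}_{(r)}]^{\vir}_{\red}$, which has virtual dimension $d$, every summand with $k\ne d$ pairs to zero for degree reasons, leaving $(-1)^d\s^{-p_g}\int_{[\mM_h(v)]^{\vir}_0}c_d(T^{\vir})=(-1)^d\s^{-p_g}\DT^{\omega_S}_h(v)_{(r)}$.

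The hard part is really just the $K$-theory identity in the second step, which encodes the fact that the obstruction theory of $\mM_h(v)$ is self-dual up to the $p_g$ weight-one trivial summands coming from $H^2(\O_S)$; once that is in hand the rest is a short manipulation of equivariant Euler classes, where the only things to watch are the $[-2]$ shift in the relative Serre duality isomorphism and the signs in the $\t$-twisted Euler class expansion.
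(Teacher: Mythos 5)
Your proof is correct and follows essentially the same route as the paper: both hinge on relative Grothendieck--Verdier duality to convert $\dR\hom_p(\eE,\eE\otimes\omega_S\cdot\t)$ into the dual of $\dR\hom_p(\eE,\eE)$ twisted by $\t$, the trace splitting together with $H^1(\O_S)=0$ to peel off the $(1+p_g)$ trivial summands, and an expansion of the $\t$-twisted equivariant Euler class in powers of $\s$ so that only the $c_d$ term survives the integration for degree reasons. The only cosmetic difference is that the paper routes the computation through $e(\dR\hom_p(\eE,\eE\cdot\t^{-1}))$ via its Lemma on $e((A_\bullet\cdot\t^b)^\vee)$, whereas you keep the weight $+1$ on the dual of the virtual tangent bundle; the signs and the $\s^{-p_g}$ factor come out identically.
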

\begin{proof} By Corollary \ref{int(r)},
\begin{align*}
\DT^{\omega_S}_h(v;1)_{(r)}&=
\int_{[\mM_h(v)]^{\vir}_0}
\frac{\s }{e\left(\dR \hom_{p}(\eE, \eE\otimes \omega_S\cdot \t)\right)}\\&=
\int_{[\mM_h(v)]_0^{\vir}}
\frac{(-1)^{-1+d-p_g} \s}{e\left(\dR \hom_{p}(\eE, \eE\cdot \t^{-1})\right)}\\&=
\int_{[\mM_h(v)]_0^{\vir}}
(-1)^{d}\s^{-p_g}c_d\left(\ext^1_{p}(\eE,\eE)-\ext^2_{p}(\eE, \eE)_0\right),
\end{align*} and then use Corollary \ref{int(r)} again.
Here we used Grothendieck-Verdier duality in the second equality, Lemma \ref{oylah}, and the identities \begin{align*} &e( \left(\hom_{p}(\eE,\eE\cdot \t^{-1})\right)=e(\O_{\mM}\cdot \t^{-1})=-\s,\\ &e\left(\ext^2_{p}(\eE,\eE\cdot \t^{-1})\right)=e\left(\dR^2p_*\O\cdot \t^{-1}\right)\cdot e\left(\ext^2_{p}(\eE,\eE\cdot \t^{-1})_0\right)=(-\s)^{p_g}e\left(\ext^2_{p}(\eE,\eE\cdot \t^{-1})_0\right),\\
&\rank\left[\dR \hom_{p}\left(\eE, \eE\cdot \t^{-1}\right)\right]=1-d+p_g, \end{align*} in the third equality. 
\end{proof}

\begin{lem} \label{oylah} If $A_\bullet$ is a finite complex of vector bundles and $\t$ is the trivial line bundle with the $\C^*$-action of weight 1 over a scheme  then, for any integer $b$
$$e\big((A_\bullet\cdot \t^b)^\vee\big)=(-1)^{\rank(A_\bullet)} e\big(A_\bullet\cdot \t^{b}\big).$$
\end{lem}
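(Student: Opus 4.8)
The statement is formal and reduces, by multiplicativity of the equivariant Euler class on complexes, to the case of a single vector bundle. The plan is: (i) peel off the alternating product over the terms of $A_\bullet$; (ii) for one vector bundle compute with equivariant Chern roots; (iii) check that the signs reassemble into $(-1)^{\rank(A_\bullet)}$.

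\textbf{Step 1: reduction to one bundle.} Write $A_\bullet=(\cdots\to A_i\to A_{i+1}\to\cdots)$ with $A_i$ a vector bundle placed in degree $i$, so that $\rank(A_\bullet)=\sum_i(-1)^i\rank(A_i)$ and, by definition, the equivariant Euler class of the complex is the alternating product $e(A_\bullet\cdot\t^b)=\prod_i e(A_i\cdot\t^b)^{(-1)^i}$ (an identity in the localized equivariant ring $\mathbb{Q}[\s,\s^{-1}]$-module, where it is understood, as throughout the paper, that $\s$ has been inverted). Since $(-)^\vee$ and $(-)\cdot\t^b$ are exact functors, $(A_\bullet\cdot\t^b)^\vee$ is again a finite complex of vector bundles, with term $(A_i\cdot\t^b)^\vee=A_i^\vee\cdot\t^{-b}$ in degree $-i$. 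Hence $e\big((A_\bullet\cdot\t^b)^\vee\big)=\prod_i e\big((A_i\cdot\t^b)^\vee\big)^{(-1)^i}$, and it suffices to prove the single-bundle identity $e\big((V\cdot\t^b)^\vee\big)=(-1)^{\rank(V)}\,e(V\cdot\t^b)$ and then take the alternating product: the resulting signs are $\prod_i(-1)^{(-1)^i\rank(A_i)}=(-1)^{\sum_i(-1)^i\rank(A_i)}=(-1)^{\rank(A_\bullet)}$, while $\prod_i e(A_i\cdot\t^b)^{(-1)^i}=e(A_\bullet\cdot\t^b)$.

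\textbf{Step 2: the single-bundle case.} Let $V$ be a vector bundle of rank $s$ with ordinary Chern roots $x_1,\dots,x_s$, and put $\s=c_1(\t)$ as in the paper's notation. Then $V\cdot\t^b$ has equivariant Chern roots $x_1+b\s,\dots,x_s+b\s$, so $e(V\cdot\t^b)=\prod_{j=1}^s(x_j+b\s)$, while $(V\cdot\t^b)^\vee=V^\vee\cdot\t^{-b}$ has Chern roots $-(x_1+b\s),\dots,-(x_s+b\s)$, whence $e\big((V\cdot\t^b)^\vee\big)=\prod_{j=1}^s\big(-(x_j+b\s)\big)=(-1)^s\prod_{j=1}^s(x_j+b\s)=(-1)^s\,e(V\cdot\t^b)$. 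Combining with Step 1 gives the lemma.

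\textbf{Expected obstacle.} There is essentially none: the only point to be careful about is the bookkeeping of signs and the convention that $e$ of a complex is the alternating product of the Euler classes of its terms, so that the exponent matches $\rank(A_\bullet)=\sum_i(-1)^i\rank(A_i)$ rather than $\sum_i\rank(A_i)$; everything else is the standard identity $e(V^\vee)=(-1)^{\rank V}e(V)$ applied after the harmless twist by $\t^b$.
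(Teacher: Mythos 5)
Your proof is correct and follows essentially the same route as the paper: the paper writes $A_\bullet$ in K-theory as a difference $A_1-A_2$ of two bundles and expands $e(A_i^\vee\cdot\t^{-b})$ in Chern classes using $c_j(A^\vee)=(-1)^jc_j(A)$, which is exactly your reduction-to-one-bundle step followed by the Chern-root computation $e(V^\vee\cdot\t^{-b})=(-1)^{\rank V}e(V\cdot\t^b)$. The only cosmetic difference is that you peel off the terms of the complex one at a time rather than collapsing to a two-term K-theory representative first; the sign bookkeeping is identical.
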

\begin{proof}
In K-theory $A_\bullet$ is equivalent to $A_1-A_2$ where $A_i$ is a vector bundle of rank $a_i$.
\begin{align*}
e\big((A_\bullet\cdot \t^b)^\vee\big)&=\frac{e(A^\vee_1\cdot \t^{-b})}{e(A^\vee_2\cdot \t^{-b})}=\frac{c_{a_1}(A^\vee_1)-b\, \s\cdot c_{a_1-1}(A^\vee_1)+\dots+(-b \,\s)^{a_1}}{c_{a_2}(A^\vee_2)-b\, \s\cdot c_{a_2-1}(A^\vee_2)+\dots+(-b \,\s)^{a_2}}\\&=\frac{(-1)^{a_1}}{(-1)^{a_2}}\cdot \frac{c_{a_1}(A_1)+b\, \s\cdot c_{a_1-1}(A_1)+\dots+(b \,\s)^{a_1}}{c_{a_2}(A_2)+b\, \s\cdot c_{a_2-1}(A_2)+\dots+(b \,\s)^{a_2}}\\&
=(-1)^{\rank(A_\bullet)}\frac{e(A_1\cdot \t^{b})}{e(A_2\cdot \t^{b})}=(-1)^{\rank(A^\bullet)} e\big(A_\bullet\cdot \t^{b}\big).
\end{align*}
\end{proof}

\subsection{Nested Hilbert schemes on $S$}
\subsubsection{Review of the results of \cite{GSY17a}} Let $\S{\n}_\betab$ be the nested Hilbert scheme as in Section \ref{mainresults}. When $r=2$, and so $\n=n_1, n_2,\; \betab=\beta_1$, we have the following well-known special cases:
\begin{enumerate}[1.]
\item $n_2=0,\; \beta_1=0$. The Hilbert scheme of $n_1$ points on $S$, denoted by $\S{n_1}$. It is nonsingular of dimension $2n_1$.
\item $n_1=n_2=0,\; \beta_1\neq 0$. The Hilbert scheme of divisors in class $\beta_1$, denoted by $S_{\beta_1}$. It is nonsingular if $H^{i\ge 1}(L)=0$ for any line bundle $L$ with $c_1(L)=\beta_1$.
\item $n_2=0$. Then $\S{n_1}_{\beta_1}=\S{n_1}\times S_{\beta_1}$. This is the Hilbert scheme of 1-dimensional subschemes $Z\subset S$ such that $[Z]=\beta_1,\quad c_2(I_Z)=n_1.$
\end{enumerate}

\begin{notn} We will denote the universal ideal sheaves of $\S{m}_{\beta}$, $\S{m}$, and $S_\beta$ respectively by $\i{m}_{-\beta}$, $\i{m}$, and $\I_{-\beta}$, and the corresponding universal subschemes respectively by $\Z^{[m]}_\beta$, $\Z^{[m]}$, and $\Z_\beta$.  We will use the same symbol for the pull backs of $\i{m}$ and $\I_{-\beta}=\O(-\Z_\beta)$ via $\id \times \pts$ and $\id \times \div$ to $S\times \S{m}_\beta$. We will also write $\i{m}_{\beta}$ for $\i{m}\otimes \O(\Z_\beta)$. Using the universal property of the Hilbert scheme, it can be seen that $\i{m}_{-\beta}\cong \i{m}\otimes\O(-\Z_\beta)$, and hence it is consistent with the chosen notation. Let $\pi:S\times \S{m}_\beta\to \S{m}_\beta$ be the projection, we denote the derived functor $\dR\pi_*\dR\hom$ by $\dR\hom_\pi$ and its $i$-th cohomology sheaf by $\ext^i_\pi$.
\end{notn}

The tangent bundle of $\S{m}$ is identified with 
$$T_{\S{m}}\cong \hom_\pi\left(\i{m},\O_{\Z^{[m]}}\right)\cong \dR\hom_{\pi}\left(\i{m},\i{m}\right)_0[1]\cong \ext^1_\pi\left(\i{m},\i{m}\right).$$ 

The nested Hilbert scheme is realized as the closed subscheme
\begin{equation}\label{nested inclusion} \iota:\S{\n}_{\betab} \hookrightarrow \S{n_1}_{\beta_1}\times\cdots \times  \S{n_{r-1}}_{\beta_{r-1}}\times  \S{n_r}. \end{equation}
%
%
%

The inclusions in \eqref{inclusion} in the level of universal ideal sheaves give the universal inclusions
$$\Phi_i: \i{n_i}\to \i{n_{i+1}}_{\beta_i} \quad  \quad 1\le i< r$$ defined over $S\times \S{\n}_{\betab}$. 
\begin{notn} Let $\pr_i$ be the closed immersion \eqref{nested inclusion} followed by the projection to the $i$-th factor, and let $\pi:S\times \S{\n}_\betab\to \S{\n}_\betab$ be the projection. Then we have the fibered square 
\begin{equation} \label{basch} \xymatrix@=1em{S\times \S{\n}_{\betab} \ar@{^{(}->}[r]^-{\iota'} \ar[d]^{\pi} & S \times \S{n_1}_{\beta_1}\times\cdots \times  \S{n_{r-1}}_{\beta_{r-1}}\times  \S{n_r} \ar[d]^{\pi'}
\\
\S{\n}_{\betab} \ar@{^{(}->}[r]^-{\iota}  & \S{n_1}_{\beta_1}\times\cdots \times  \S{n_{r-1}}_{\beta_{r-1}}\times  \S{n_r} }
\end{equation} where $\pi'$ is the projection and $\iota'=\id\times \iota$.
\end{notn}

Applying the functors $\dR\hom_\pi\left (-,\i{n_{i+1}}_{\beta_i}\right)$ and $\dR\hom_\pi\left (\i{n_{i}},-\right)$ to the universal map $\Phi_i$, we get the following morphisms of the derived category $$\dR\hom_\pi\left(\i{n_{i+1}}, \i{n_{i+1}}\right ) \xrightarrow{\Xi_i} \dR \hom_\pi\left(\i{n_i}, \i{n_{i+1}}_{\beta_i}\right),$$ $$\dR\hom_\pi\left(\i{n_{i}}, \i{n_{i}}\right ) \xrightarrow{\Xi'_i} \dR \hom_\pi\left(\i{n_i}, \i{n_{i+1}}_{\beta_i}\right).$$ 

The following theorem is one of the main results of \cite{GSY17a}: 
\begin{theorem}[\cite{GSY17a} Theorem 1 and Proposition 2.4] \label{abs-r}
$\S{\n}_\betab$ is equipped with the perfect absolute obstruction theory $\Fb\to \LL_{\S{\n}_\betab}$ whose virtual tangent bundle is given by \begin{align*}&\Fbv=\cone \left(\left[\bigoplus_{i=1}^r \dR\hom_\pi \left(\i{n_i},\i{n_i}\right)\right]_0 \to \bigoplus_{i=1}^{r-1}\dR \hom_\pi \left(\i{n_i}, \i{n_{i+1}}_{\beta_i}\right)\right),\end{align*}
where the map above is naturally induced from all the maps $\Xi_i$ and $\Xi'_i$, and $[-]_0$ means the trace-free part. As a result, $\S{\n}_\betab$ carries a natural virtual fundamental class $$[\S{\n}_\betab]^{\vir} \in A_d( \S{\n}_\betab), \quad \quad  d=n_1+n_r+\frac{1}{2}\sum_{i=1}^{r-1}\beta_i\cdot(\beta_i-K_S),$$ where $K_S$ is the canonical divisor of $S$.
\end{theorem}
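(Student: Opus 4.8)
The plan is to realize $\S{\n}_\betab$ inside a smooth ambient space and to build the obstruction theory from the deformation theory of the universal inclusions. Write $\S:=\S{\n}_\betab$ and let $\iota\colon\S\hookrightarrow Y:=\S{n_1}_{\beta_1}\times\cdots\times\S{n_{r-1}}_{\beta_{r-1}}\times\S{n_r}$ be the closed immersion \eqref{nested inclusion}; since $H^1(\O_S)=0$, each factor $\S{n_i}_{\beta_i}\cong\S{n_i}\times S_{\beta_i}$ is nonsingular, so $Y$ is smooth. Pulling all the universal ideal sheaves back to $S\times\S$ we obtain the universal inclusions $\Phi_i\colon\i{n_i}\to\i{n_{i+1}}_{\beta_i}$, $1\le i<r$, together with the maps $\Xi_i,\Xi'_i$ of the excerpt. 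The idea is to produce the obstruction theory directly out of the deformation theory of the whole nested diagram of the $\i{n_i}$ and the $\Phi_i$.

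First I would construct the morphism $\theta\colon\Fb\to\LL_\S$. For each $i$, the truncated Atiyah class of $\i{n_i}$, composed with the projection $\LL_{S\times\S}\to\pi^*\LL_\S$ and pushed down along $\pi$, yields the standard map $\big([\dR\hom_\pi(\i{n_i},\i{n_i})]_0\big)^\vee[1]\to\LL_\S$ underlying the trace-free obstruction theory of the $i$-th Hilbert factor (passing to trace-free parts is harmless since $\id\colon\O\to\dR\hom_\pi(\i{n_i},\i{n_i})$ splits the trace for rank-one sheaves). Functoriality of the Atiyah class with respect to $\Phi_i$ makes these maps compatible, up to homotopy, with $\Xi_i$ and $\Xi'_i$, so assembling over all indices produces $\theta$ from $\Fb$, the dual of the cone in the statement, to $\LL_\S$.

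The substantive step is to verify that $\theta$ is a perfect obstruction theory. For perfectness of $\Fb$ in $[-1,0]$ one must show the dual cone lives in degrees $0,1$: the degree $-1$ part vanishes because the trace-free $\hom_\pi$ of an ideal sheaf is zero, while vanishing of the degree $2$ part is the nontrivial point and comes down, via relative Grothendieck–Serre duality and $H^1(\O_S)=0$, to ruling out nonzero maps $\i{n_{i+1}}_{\beta_i}\to\i{n_i}\otimes\omega_S$ — here the nesting $\Phi_i$ and the conventions on $\n,\betab$ are genuinely used. That $h^0(\theta)$ is an isomorphism and $h^{-1}(\theta)$ is surjective I would check with the functorial criterion of \cite[Theorem 4.5]{BF97}: for a square-zero extension $B_0\subset B$ with ideal $I$, extending $f\colon B_0\to\S$ over $B$ is precisely the datum of a simultaneous flat extension of the family $\i{n_i}$ together with lifts of the $\Phi_i$ compatible with them, so the obstruction decomposes into the (after trace-reduction, trivial) obstructions to extending the $\i{n_i}$ and the obstruction in $\ext^1_\pi(\i{n_i},\i{n_{i+1}}_{\beta_i})\otimes I$ to lifting each $\Phi_i$ with fixed source and target; this assembles exactly into $h^1$ of the cone tensored with $I$, with the analogous identification for tangents and $h^0$, and base-change compatibility follows from base change for $\ext^\bullet_\pi$. (Alternatively, since $Y$ is smooth one may build a relative perfect obstruction theory for $\S/Y$ from the deformation theory of the $\Phi_i$ alone and then cone it with $\Omega_Y$.)

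Finally, the virtual dimension is $\vrank(\Fbv)$, a routine Hirzebruch–Riemann–Roch computation on $S$ involving the Euler characteristics of $\dR\hom_\pi(\i{n_i},\i{n_i})$ and $\dR\hom_\pi(\i{n_i},\i{n_{i+1}}_{\beta_i})$, which evaluates to $d=n_1+n_r+\tfrac12\sum_{i=1}^{r-1}\beta_i\cdot(\beta_i-K_S)$; the class $[\S{\n}_\betab]^{\vir}\in A_d(\S{\n}_\betab)$ is then produced by \cite{BF97}. I expect the main obstacle to be the two parts of the verification that most use the geometry of $S$ rather than homological formalism — the degree-$2$ vanishing in the perfectness check, and the deformation-theoretic bookkeeping matching simultaneous deformations and obstructions of the entire nested diagram to the cohomology of the cone with all the required compatibilities — whereas the construction of $\theta$ and the dimension count are essentially formal.
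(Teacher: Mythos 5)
A preliminary remark: the paper contains no proof of Theorem \ref{abs-r} --- it is imported wholesale from \cite{GSY17a} (Theorem 1 and Proposition 2.4) and closed with a \emph{qed} --- so your proposal can only be measured against the statement itself. Your overall architecture (Atiyah classes to build $\theta$, the Behrend--Fantechi criterion, Hirzebruch--Riemann--Roch for the dimension) is the standard one, but the step you yourself single out as ``the nontrivial point'' is reduced to a statement that is false, and this is tied to a misreading of the notation.

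First, $[-]_0$ in the statement is the trace-free part of the \emph{whole} direct sum $\bigoplus_{i=1}^r \dR\hom_\pi(\i{n_i},\i{n_i})$, i.e.\ it removes a single copy of $\dR\pi_*\O_{S\times \S{\n}_\betab}$, not one copy per summand; this is forced by the asserted virtual dimension, since removing $r$ traces would change $\vrank(\Fbv)$ by $\chi(\O_S)$ and give $d+\chi(\O_S)$ instead of $d$. Consequently the $h^0$ of the source of the cone is $\O^{r-1}$ (not $0$, so the degree-$(-1)$ vanishing of $\Fb$ is the injectivity of $(\lambda_i)\mapsto\big((\lambda_{i+1}-\lambda_i)\Phi_i\big)$ into $\bigoplus_i\hom_\pi(\i{n_i},\i{n_{i+1}}_{\beta_i})$, not the vanishing you invoke), and its $h^2$ is the trivial bundle $\O^{(r-1)p_g}$.

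Second, the degree-$2$ perfectness does \emph{not} come down to ruling out nonzero maps $\i{n_{i+1}}_{\beta_i}\to \i{n_i}\otimes\omega_S$: such maps exist in abundance. For instance when $\beta_i=0$ the nesting forces $Z_{i+1}\subset Z_i$, and for $Z_{i+1}=Z_i$ the space of such maps is all of $H^0(\omega_S)$; more generally $\ext^2_\pi(\i{n_i},\i{n_{i+1}}_{\beta_i})\neq 0$ whenever $K_S-\beta_i$ is effective --- indeed the nonvanishing of exactly these groups is the mechanism behind Theorem \ref{reducedclass}, where they force the virtual class (not the obstruction theory) to vanish. What actually has to be proved is that the map on $h^2$ from $\O^{(r-1)p_g}$ (the trace-free part of the diagonal $\ext^2_\pi(\i{n_i},\i{n_i})$'s) to $\bigoplus_i \ext^2_\pi(\i{n_i},\i{n_{i+1}}_{\beta_i})$ is \emph{surjective}; Serre-dually, that pre- and post-composition with the $\Phi_i$ embed $\bigoplus_i\hom_\pi(\i{n_{i+1}}_{\beta_i},\i{n_i}\otimes\omega_S)$ into the anti-diagonal quotient of $\bigoplus_{i=1}^r H^0(\omega_S)\otimes\O$. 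Note that with your per-summand trace-free convention the source of this map would be zero and perfectness would genuinely fail, which shows the two issues are the same issue. So the gap is not a routine verification left to the reader but a reduction to a false vanishing; the correct surjectivity statement is where the geometry of the nested inclusions actually enters, and it is the heart of the cited result.
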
 \qed

\begin{defi} \label{invs} Suppose that $r=2$ and $M\in \pic(S)$. Define the following elements in  $K(\S{n_1,n_2}_\beta)$: $$\sK^{n_1,n_2}_{\beta;M}:=\left[\dR\pi_*M(\Z_\beta) \right]-\left[\dR\hom_\pi(\i{n_1},\i{n_2}_{\beta}\otimes M)\right], \quad \sG_{\beta;M}:=\left[\dR\pi_*M(\Z_\beta)|_{\Z_\beta}\right].$$  

We also define the twisted tangent bundles in $K(\S{n_i})$ (and will use the same symbols for their pullbacks to $\S{n_1,n_2}$):$$\sT^M_{\S{n_i}}:=\left[\dR\pi_*M \right]-\left[\dR\hom_\pi(\i{n_i},\i{n_i}\otimes M)\right].$$ Note $\sT^{\O_S}_{\S{n_i}}=[T_{\S{n_i}}]$.

Let $\cP:=\cP(M,\beta,n_1,n_2)$ be a polynomial in the Chern classes of $\sK^{n_1,n_2}_{\beta;M}$, $\sG_{\beta;M}$, and $\sT^M_{\S{n_i}}$, then, we can define the invariant
$$\sN_S(n_1,n_2,\beta; \cP):=\int_{[\S{n_1,n_2}_\beta]^{\vir}}\cP.$$
\end{defi}
\begin{defi}
Let $M\in \pic(S)$. Define an element of $K(\S{n_1}\times \S{n_2})$ as  $$\sE_M^{n_1,n_2}:=\left[\dR\pi'_*M\right]-\left[\dR\hom_{\pi'}(\i{n_1},\i{n_2}\otimes M)\right].$$ If $M=\O_S$ then we will drop it from the notation. \end{defi}

The following results are proven in \cite{GSY17a}:
\begin{theorem}[\cite{GSY17a} Theorem 6] \label{product}
Let $L_1,\dots, L_s$, $L'_1,\dots, L'_{s'}$, $M_1,\dots, M_{t}$, be some line bundles on the nonsingular projective surface $S$, and $l_1,\dots, l_s$, $l'_1,\dots, l'_{s'}$, $m_1,\dots, m_{t}$ be finite sequences of $\pm 1$. Define
$$\cP:=\prod_{i=1}^s c(\sT^{L_i}_{\S{n_1}})^{l_i}\cup\prod_{i=1}^{s'} c(\sT^{L'_i}_{\S{n_2}})^{l'_i}\cup \prod_{i=1}^t c(\kk{n_1, n_2}_{0;M_i})^{m_i}.$$
Then,
\begin{align*}&\sN_S(n_1,n_2,0;\cP)=\\&\int_{\S{n_1}\times \S{n_2}} c_{n_1+n_2}(\sE^{n_1,n_2})\cup \prod_{i=1}^s c(\sT^{L_i}_{\S{n_1}})^{l_i}\cup\prod_{i=1}^{s'} c(\sT^{L'_i}_{\S{n_2}})^{l'_i}\cup \prod_{i=1}^t c(\sE^{n_1, n_2}_{M_i})^{m_i}.\end{align*}

\end{theorem}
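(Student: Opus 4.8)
The plan is to reduce the integral over the nested Hilbert scheme $\S{n_1,n_2}_0$ (which for $\betab=0$ is a closed subscheme of $\S{n_1}\times\S{n_2}$) to an integral over the smooth ambient space $\S{n_1}\times\S{n_2}$, by identifying the virtual class $[\S{n_1,n_2}_0]^{\vir}$ with the refined top Chern class of an explicit excess bundle. First I would unwind the perfect obstruction theory of Theorem \ref{abs-r} in the case $r=2$, $\betab=0$: the virtual tangent bundle is the cone of $[\dR\hom_\pi(\i{n_1},\i{n_1})\oplus\dR\hom_\pi(\i{n_2},\i{n_2})]_0\to\dR\hom_\pi(\i{n_1},\i{n_2})$. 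Using the smoothness of $\S{n_1}$ and $\S{n_2}$ and the identification $T_{\S{n_i}}\cong\dR\hom_\pi(\i{n_i},\i{n_i})_0[1]$, this cone sits in a triangle exhibiting $\Fbv$ as $[T_{\S{n_1}}\oplus T_{\S{n_2}}]|_{\S{n_1,n_2}_0}$ minus a genuine two-term complex built from $\dR\hom_\pi(\i{n_1},\i{n_2})$ (the trace-free reduction and the $H^1(\O_S)=0$ hypothesis are used here to control $h^0$ and the $\O$-summands). The key point is that this two-term complex is precisely the obstruction-theoretic normal complex of $\iota:\S{n_1,n_2}_0\hookrightarrow\S{n_1}\times\S{n_2}$, i.e. $\iota$ is a regular embedding up to the excess coming from $\ext^2_\pi$, and the virtual class is the refined Euler class construction of Fulton.

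Next I would realize $\S{n_1,n_2}_0$ set-theoretically and scheme-theoretically as the zero locus of a section of a bundle (or more precisely as a degeneracy locus): the inclusion $I_{Z_1}\subset I_{Z_2}$ is governed by the map $\Phi:\i{n_1}\to\i{n_2}$, and over $\S{n_1}\times\S{n_2}$ one has the complex $\dR\hom_{\pi'}(\i{n_1},\i{n_2})$ whose zeroth cohomology sheaf, pushed to a two-term resolution $[\mathcal{A}^0\to\mathcal{A}^1]$, has $\S{n_1,n_2}_0$ as the locus where the induced section of $\mathcal{A}^1$ (modulo image) vanishes. Then I would invoke the standard fact (as in \cite{GSY17a}, and originally Fulton's intersection theory) that when the virtual class of the zero locus is cut out by a perfect obstruction theory of this "$\tau^{\le 1}$ of a Hom-complex" shape, one has the pushforward formula $\iota_*[\S{n_1,n_2}_0]^{\vir}=c_{\mathrm{top}}(\sE^{n_1,n_2})\cap[\S{n_1}\times\S{n_2}]$, where $\sE^{n_1,n_2}=[\dR\pi'_*\O]-[\dR\hom_{\pi'}(\i{n_1},\i{n_2})]$ has rank $n_1+n_2$. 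This is exactly the $\dR\pi_*\O_S$-corrected version of $\dR\hom_{\pi'}(\i{n_1},\i{n_2})$ forced by the trace-free part of the obstruction theory, which is why the definition of $\sE^{n_1,n_2}$ carries the extra $[\dR\pi'_*\O]$ term.

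Finally I would push the integrand through. By the projection formula,
\[
\int_{[\S{n_1,n_2}_0]^{\vir}}\cP=\int_{\S{n_1}\times\S{n_2}}c_{n_1+n_2}(\sE^{n_1,n_2})\cup\iota_*\cP,
\]
so it remains to check that each factor of $\cP$ pulls back from the ambient product. The classes $c(\sT^{L_i}_{\S{n_1}})$ and $c(\sT^{L'_i}_{\S{n_2}})$ are by definition pulled back from $\S{n_1}$ and $\S{n_2}$ respectively (the notational convention in Definition \ref{invs} already records this), so they are unproblematic. The remaining check is that $\kk{n_1,n_2}_{0;M_i}=[\dR\pi_*M_i]-[\dR\hom_\pi(\i{n_1},\i{n_2}\otimes M_i)]$ is the restriction to $\S{n_1,n_2}_0$ of $\sE^{n_1,n_2}_{M_i}=[\dR\pi'_*M_i]-[\dR\hom_{\pi'}(\i{n_1},\i{n_2}\otimes M_i)]$; this is immediate from flat base change along the fibered square \eqref{basch}, since $\iota'^*\i{n_j}\cong\i{n_j}$ and $\dR\hom$ and $\dR\pi_*$ commute with the flat (indeed, the relevant) base change $\iota$.

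The main obstacle will be the second step: establishing rigorously that $\iota_*[\S{n_1,n_2}_0]^{\vir}$ equals $c_{n_1+n_2}(\sE^{n_1,n_2})\cap[\S{n_1}\times\S{n_2}]$ with exactly the right bundle (including the $[\dR\pi'_*\O]$ correction and verifying the rank is $n_1+n_2=\dim\S{n_1,n_2}_0^{\vir}$ when $\beta=0$). This requires matching the obstruction theory of Theorem \ref{abs-r} restricted to $\betab=0$ with the "obstruction theory of a section of a bundle" in a way compatible with Fulton's refined Euler class, i.e. showing the two perfect obstruction theories are equivalent in the derived category over $\S{n_1,n_2}_0$ compatibly with the maps to the cotangent complex. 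Everything else is bookkeeping with base change and the projection formula.
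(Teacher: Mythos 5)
The paper itself contains no proof of this statement: it is imported verbatim from \cite{GSY17a} (Theorem 6 there) and recorded with an end-of-proof box, so there is no internal argument to compare yours against. Judged on its own terms, your proposal has a genuine gap at precisely the step you yourself flag as ``the main obstacle,'' and the sketch you offer for that step would not go through as written.

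Everything in your argument funnels through the identity $\iota_*[\S{n_1,n_2}_0]^{\vir}=c_{n_1+n_2}(\sE^{n_1,n_2})\cap[\S{n_1}\times\S{n_2}]$, and your justification for it is to present $\S{n_1,n_2}_0$ as the zero locus of a section of a bundle and invoke Fulton's refined Euler class. But $\S{n_1,n_2}_0$ is not the vanishing locus of a section of a rank-$(n_1+n_2)$ bundle in any evident way: it is the locus in $\S{n_1}\times\S{n_2}$ where $\hom_{\pi'}(\i{n_1},\i{n_2})$ jumps from $0$ to $\C$, i.e.\ a \emph{degeneracy locus} of a complex of bundles resolving $\dR\hom_{\pi'}(\i{n_1},\i{n_2})$ --- and when $p_g(S)>0$ that complex has cohomology in degrees $0,1,2$ (the $\ext^2_{\pi'}$ is generically nonzero by Serre duality), so your two-term resolution $[\mathcal{A}^0\to\mathcal{A}^1]$ does not even exist. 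The expected class of such a degeneracy locus is a Thom--Porteous determinantal expression, and identifying it, compatibly with the obstruction theory of Theorem \ref{abs-r}, with the single Chern class $c_{n_1+n_2}(\sE^{n_1,n_2})$ of the Carlsson--Okounkov-type class (whose rank is $n_1+n_2$ only because of the $[\dR\pi'_*\O]$ correction, which a naive ``section of a bundle'' picture does not produce) is exactly the hard content; it is not an instance of ``the standard fact'' about zero loci. The difficulty is already visible for $\S{n,n-1}$, which is smooth of dimension $2n$ while the virtual dimension is $2n-1$: the embedding $\iota$ is not regular and the space has excess dimension, so any comparison of obstruction theories requires real work. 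Your remaining steps --- flat base change along \eqref{basch} giving $\kk{n_1,n_2}_{0;M}=\iota^*\sE^{n_1,n_2}_M$, the pullback of the $\sT$-classes, and the projection formula --- are correct but are the easy bookkeeping; the pushforward identity you need is of essentially the same depth as the theorem being proved and must be taken from, or re-proved as in, \cite{GSY17a}.
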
\qed

\begin{theorem} [\cite{GSY17a} Proposition 2.9] \label{reducedclass}
Suppose that $p_g(S)>0$ and $$|L|\neq \emptyset \quad \& \quad |\omega_S\otimes L^{-1}|=\emptyset$$ for any line bundle $L$ with $c_1(L)=\beta$. Then $[\S{n_1,n_2}_\beta]^{vir}=0$. In particular, in this case $\sN_S(n_1,n_2,\beta; \cP)=0$ for any choice of the class $\cP$.
\end{theorem}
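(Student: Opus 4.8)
The plan is to exhibit an everywhere-surjective cosection of the obstruction sheaf of the perfect obstruction theory $\Fb\to\LL_{\S{n_1,n_2}_\beta}$ of Theorem \ref{abs-r}, and then to invoke cosection localization in the style of Kiem and Li: a cosection compatible with the obstruction theory and surjective at every point has empty degeneracy locus, so the localized virtual class lives in $A_d(\emptyset)=0$ (here $d$ is the virtual dimension), forcing $[\S{n_1,n_2}_\beta]^{\vir}=0$; integrating any $\cP$ against the zero cycle then gives $\sN_S(n_1,n_2,\beta;\cP)=0$. First one disposes of trivial cases: if $\beta=0$ the hypotheses would force $|\omega_S|=\emptyset$, i.e. $p_g(S)=0$, contrary to assumption, so $\beta\neq 0$; and $|L|\neq\emptyset$ means $S_\beta\neq\emptyset$, so we may assume $\S{n_1,n_2}_\beta$ is nonempty, otherwise there is nothing to prove.

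The heart of the argument is a vanishing. Writing $A^\bullet:=\big[\dR\hom_\pi(\i{n_1},\i{n_1})\oplus\dR\hom_\pi(\i{n_2},\i{n_2})\big]_0$ and $B^\bullet:=\dR\hom_\pi(\i{n_1},\i{n_2}_\beta)$, so that $\Fbv=\cone(A^\bullet\to B^\bullet)$ by Theorem \ref{abs-r} with $r=2$, I would first show $\ext^2_\pi(\i{n_1},\i{n_2}_\beta)=0$. By relative Serre duality over $\S{n_1,n_2}_\beta$ (legitimate since $\pi$ is smooth and projective of relative dimension $2$), together with $H^1(\O_S)=0$, one gets $\ext^2_\pi(\i{n_1},\i{n_2}_\beta)^{\vee}\cong\hom_\pi\!\big(\i{n_2},\i{n_1}\otimes\omega_S\otimes L^{-1}\big)$. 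Over a point, a nonzero homomorphism $\I_{Z_2}\to\I_{Z_1}\otimes\omega_S\otimes L^{-1}$, composed with the inclusion into $\omega_S\otimes L^{-1}$ and extended over the finite set $Z_2$, would produce a nonzero section of $\omega_S\otimes L^{-1}$, contradicting $|\omega_S\otimes L^{-1}|=\emptyset$; this is exactly where that hypothesis is used. Since $\ext^2_\pi$ is the top relative $\ext$ sheaf on a family of surfaces, fibrewise vanishing upgrades to $\ext^2_\pi(\i{n_1},\i{n_2}_\beta)=0$ by cohomology and base change.

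Next I would build the cosection. From $h^2(B^\bullet)=0$ and the long exact cohomology sequence of the triangle $A^\bullet\to B^\bullet\to\Fbv\to A^\bullet[1]$, the obstruction sheaf $h^1(\Fbv)$ surjects onto $h^2(A^\bullet)$. Because each $\S{n_i}$ is smooth, the relative trace identifies $\ext^2_\pi(\i{n_i},\i{n_i})$ with $R^2\pi_*\O_S$, whence $h^2(A^\bullet)$ — the trace-free part of $\ext^2_\pi(\i{n_1},\i{n_1})\oplus\ext^2_\pi(\i{n_2},\i{n_2})$ — is isomorphic to $R^2\pi_*\O_S\cong\O_{\S{n_1,n_2}_\beta}^{\,p_g}$. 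Composing gives a surjection $\sigma\colon h^1(\Fbv)\twoheadrightarrow\O^{\,p_g}$, hence, projecting to one factor, an everywhere-surjective cosection $h^1(\Fbv)\to\O$. One then checks $\sigma$ is compatible with $\Fb$, i.e. that it annihilates obstruction classes: its target is the $R^2\pi_*\O_S$-component of the obstruction, which is the obstruction to deforming the determinant line bundle $\O_S(C)$ along the family of nested subschemes $(Z_1,Z_2,C)$, and this vanishes since $\pic(S)$ is smooth. With a surjective, obstruction-theory-compatible cosection available, cosection localization — equivalently, the intrinsic normal cone lands in the corank-$p_g$ kernel subbundle of the obstruction bundle for any $2$-term locally free presentation of $\Fb$, so the virtual class acquires a factor $e(\O^{p_g})=0$ — yields $[\S{n_1,n_2}_\beta]^{\vir}=0$, and the statement about $\sN_S(n_1,n_2,\beta;\cP)$ follows at once.

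The main obstacle is the compatibility verification in the third step: pinning down that the $R^2\pi_*\O_S$-valued summand of the obstruction genuinely is the (vanishing) determinant obstruction, and that $\sigma$ thus qualifies as a cosection in the sense required by the localization machinery. By contrast, the vanishing $\ext^2_\pi(\i{n_1},\i{n_2}_\beta)=0$ of the second step — which is precisely where the hypotheses $|L|\neq\emptyset$ and $|\omega_S\otimes L^{-1}|=\emptyset$ enter — is a short Serre-duality computation.
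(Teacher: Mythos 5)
The paper does not actually prove this statement: it is quoted verbatim from the companion paper [GSY17a, Proposition 2.9] and immediately followed by \qed, so there is no in-paper argument to compare yours against. Judged on its own terms, your proposal is the standard and essentially correct route to such a vanishing, and it is very close in spirit to how results of this type (reduced classes for surfaces with $p_g>0$, as in Kool--Thomas) are proved. The key computation is right and is placed correctly: Serre duality on the fibres gives $\Ext^2(I_{Z_1},I_{Z_2}\otimes L)\cong \Hom(I_{Z_2},I_{Z_1}\otimes\omega_S\otimes L^{-1})^{\vee}$, a nonzero element of which would extend over the codimension-two locus $Z_2$ to a nonzero section of $\omega_S\otimes L^{-1}$, contradicting $|\omega_S\otimes L^{-1}|=\emptyset$; the top relative $\ext$ commutes with base change, so the sheaf $\ext^2_\pi(\i{n_1},\i{n_2}_\beta)$ vanishes, and the long exact sequence of the defining triangle for $\Fbv$ then forces $h^1(\Fbv)\twoheadrightarrow h^2(A^\bullet)\cong\O^{p_g}$. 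Two small imprecisions: the relevant determinants in the compatibility step are $\det I_{Z_i}\cong\O_S$ (the factors of $A^\bullet$ are the untwisted $\dR\hom_\pi(\i{n_i},\i{n_i})$), not $\O_S(C)$; and $H^1(\O_S)=0$ plays no role in the Serre-duality step itself. The one genuinely soft point is the one you flag yourself: verifying that the surjection onto $\O^{p_g}$ annihilates the obstruction assignment (equivalently, that the trivial factor splits off the obstruction theory, so that $[\S{n_1,n_2}_\beta]^{\vir}$ acquires a factor $e(\O^{p_g})=0$) requires unpacking how $\Fb\to\LL_{\S{\n}_\betab}$ is actually built in [GSY17a], not just its K-theory class as recorded in Theorem \ref{abs-r}; the identification of the $h^2(A^\bullet)$-component of the obstruction with the (vanishing) obstructions to deforming the determinants of the $I_{Z_i}$ is the expected statement and is exactly the kind of argument the present paper runs in Theorem \ref{prop:red} and Lemma \ref{tech2}, but as written it is asserted rather than proved. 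With that step supplied, the proof is complete.
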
\qed

\subsubsection{Nested Hilbert schemes as fixed locus} Suppose that $\cE$ is a closed point of $\mMb_h(v)_{(1^r)}^{\C^*}$. By what we said above,  $\cE$ determines the rank 1 torsion free sheaves $E_{0},\dots, E_{-r+1}$ on $S$ together with the $\O_S$-module injections $\psi_1,\dots,\psi_{r-1}$. Since $S$ is nonsingular, there exist line bundles $L_1,\dots,L_r$ and the ideal sheaves $I_1,\dots,I_r$ of zero dimensional subschemes $Z_1,\dots, Z_r$ such that $E_{-i+1}\cong I_i\otimes L_i$. We can rewrite the maps $\psi_i$ as  $$\phi_i:I_i\to I_{i+1}\otimes M_i,$$ where $M_i:=L_i^{-1}\otimes L_{i+1}\otimes \cL$. The double dual $\phi^{**}_i:\O_S\to M_i$ defines a nonzero section of $M_i$ and hence either $M_i\cong \O_S$ or $|M_i|\neq \emptyset$.


 Let $$n_i:=c_2(I_i), \quad \beta_i:=c_1(\cL)+c_1(L_{i+1})-c_1(L_{i}),$$ 
 and let $\d(G):=c_1(G)\cdot h$ for any torsion free sheaf $G$ on $S$. 
By construction we have the following two conditions:

\begin{itemize} 
\item By the injectivity of $\phi_i$, $\beta_i$ is an effective curve class, in particular,  $$\d(M_i)>0 \quad \text{ or } \quad \d(M_i)=0\  \ \& \ \ n_{i+1}\le n_i.$$
\item By the stability of $\cE$, using \eqref{stabs}, $$i\sum_{j=i+1}^r\d(L_j)<(r-i)\sum_{j=1}^i\d(L_j)\quad \quad i=1,\dots, r-1.$$
\end{itemize}
\begin{defi} \label{compatible} We say $$\n:=n_{1},n_{2},\dots,n_{r}, \quad \betab:=\beta_1,\dots,\beta_{r-1},$$  are \emph{compatible with the vector $v=(r,\gamma,m)$}, if the above two conditions are satisfied, and moreover, $$\gamma=\sum_{i=1}^r c_1(L_i), \quad \quad m=\sum_{i=1}^r c_1(L_i)^2/2-n_i.$$
\end{defi}
Conversely, given $L_i$ and $I_i$ as above with the numerical invariants $\n$ and $\betab$ compatible with the vector $v$, and the injective maps $\phi_i$, one can recover a unique closed point of $\mMb_h(v)_{(1^r)}^{\C^*}$.  In fact, since $\q$ is an affine morphism, the collection of $E_{-i+1}=L_i\otimes I_i$ and the maps $\phi_i$ determine a pure $\C^*$-equivariant coherent sheaf $\cE$ on $X$ with $\ch(\q_*\cE)=v$ (see \cite[Ex. II.5.17]{H77}). It remains to show that $\cE$ is $\mu_h$-stable. By \cite[Proposition 3.19]{K11}, it suffices to show that $\mu_h(\F)<\mu_h(\cE)$ for any pure $\C^*$-equivariant subsheaf $0\neq \F\subsetneq \cE$.  
Suppose $\rank(\q_*\F)=s$, so this means that $\F \subseteq \cE'_{r-s}$, and hence $$\mu_h(\F)\le \mu_h(\cE'_{r-s}) < \mu_h(\cE),$$ where the first inequality is because $\rank(\q_*\F)=\rank (\q_* \cE'_{r-s})=s$ and the second inequality is because of \eqref{stabs}.



\begin{prop} \label{threefoldtwofold}
For any connected component $\T \subset \mMb_h(v)_{(1^r)}^{\C^*}$, there exist $\n$ and $\betab$ compatible with the vector $v$, such that $\T\cong \S{\n}_{\betab}$ as schemes. 

\end{prop}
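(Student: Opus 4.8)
The plan is to construct the isomorphism $\T\cong\S{\n}_{\betab}$ by exhibiting mutually inverse morphisms, using the universal properties of both moduli spaces. First I would fix a connected component $\T\subset\mMb_h(v)_{(1^r)}^{\C^*}$ and apply the decomposition \eqref{decompos}--\eqref{univeses} over $T\times S$ (taking $B=\T$): this produces flat families $\eE_{-i}$ of rank-one torsion-free sheaves on $S\times\T$ together with fiberwise-injective maps $\Psi_i\colon\eE_{-i}\to\eE_{-i-1}\otimes\cL$. Since $S$ is nonsingular and the $\eE_{-i}$ have rank one, étale-locally on $\T$ we can write $\eE_{-i+1}\cong \mathcal I_i\otimes \mathcal L_i$ with $\mathcal I_i$ a flat family of ideal sheaves of $0$-dimensional subschemes and $\mathcal L_i$ a line bundle pulled back from $\T$; globally the family of subschemes is well-defined even if the splitting of the line-bundle twist is not. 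The discrete invariants $n_i=c_2(\mathcal I_i)$ and $\beta_i=c_1(\cL)+c_1(\mathcal L_{i+1})-c_1(\mathcal L_i)$ are locally constant on the connected $\T$, hence constant, and the two bulleted conditions together with the Chern-character constraint $\ch(\q_*\eEb)=v$ show exactly that $\n,\betab$ are compatible with $v$ in the sense of Definition \ref{compatible}. Rewriting $\Psi_i$ as $\phi_i\colon\mathcal I_i\to\mathcal I_{i+1}\otimes M_i$ with $M_i=\mathcal L_i^{-1}\otimes\mathcal L_{i+1}\otimes\cL$ and passing to double duals $\phi_i^{**}\colon\O\to M_i$ produces a family of effective divisors $C_i$ in class $\beta_i$, and then $\phi_i$ itself factors through $\mathcal I_i(-C_i)\hookrightarrow \mathcal I_{i+1}$, which is precisely the nested-inclusion data \eqref{inclusion}. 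By the universal property of $\S{\n}_{\betab}$ (realized inside the product \eqref{nested inclusion}) this data classifies a morphism $\T\to\S{\n}_{\betab}$.

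Conversely, over $\S{\n}_{\betab}$ the universal ideal sheaves $\i{n_i}$, the universal divisors $\Z^{[n_i]}_{\beta_i}$, and the universal inclusions $\Phi_i\colon\i{n_i}\to\i{n_{i+1}}_{\beta_i}$ give, after choosing line bundles $L_i$ with the prescribed Chern classes (these exist because $H^1(\O_S)=0$ forces $\operatorname{Pic}(S)\hookrightarrow H^2(S,\ZZ)$ and the $\beta_i$, $\gamma$ are constrained by compatibility; the ambiguity of the $L_i$ does not affect the resulting sheaf on $X$), a collection $E_{-i+1}=L_i\otimes\i{n_i}$ with injective maps $\phi_i\colon E_{-i+1}\to E_{-i}\otimes\cL$ over $S\times\S{\n}_{\betab}$. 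Since $\q\colon X\to S$ is affine, this is exactly the data of a $\C^*$-equivariant coherent sheaf $\eEb$ on $X\times\S{\n}_{\betab}$, flat over the base, with $\ch(\q_*\eEb)=v$; the pointwise $\mu_h$-stability verified in the paragraph preceding the proposition (via \cite[Prop.\ 3.19]{K11} and the estimate $\mu_h(\F)\le\mu_h(\cE'_{r-s})<\mu_h(\cE)$) shows the family lies in $\mMb_h(v)^{\C^*}_{(1^r)}$, and it lands in the single component $\T$ by matching the discrete invariants. Hence we get a morphism $\S{\n}_{\betab}\to\T$.

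Finally I would check that these two morphisms are mutually inverse. This is a matter of chasing the universal objects: starting from a $B$-point of $\T$, running it through to $\S{\n}_{\betab}$ and back recovers the same $E_{-i}$ and $\phi_i$ up to the (harmless) line-bundle twist, hence the same equivariant sheaf on $X\times B$; starting from a $B$-point of $\S{\n}_{\betab}$, the round trip recovers the same nested subscheme/divisor data because taking double duals of the $\phi_i$ returns the divisors $C_i$ one started with and the kernels/cokernels reconstruct the $\i{n_i}$. Both composites therefore agree with the identity on functors of points, so they are the identity.

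The main obstacle, in my view, is handling the line-bundle twists $L_i$ cleanly: the splitting $\eE_{-i+1}\cong\mathcal I_i\otimes\mathcal L_i$ need not exist globally on $\T$, so one must argue that the \emph{family of subschemes} $\Z^{[n_i]}$ and the \emph{family of divisors} $C_i$ — equivalently the nested inclusion data, which is twist-independent — are what actually get classified, and that conversely the choice of $L_i$ in the reverse direction produces a well-defined point of $\T$ independent of that choice (two choices differ by a line bundle on the base, which changes $\eEb$ only by a pullback twist and hence gives the same moduli point). Once one commits to phrasing everything in terms of the honest universal geometric data rather than the auxiliary splittings, the universal properties do the rest and the verification that the composites are identities is routine.
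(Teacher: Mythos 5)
Your proposal is correct and follows essentially the same route as the paper: decompose the universal family into rank-one weight pieces, extract the nested subscheme/divisor data, and build mutually inverse classifying morphisms. The only real difference is that where you work \'etale-locally and argue the geometric data globalizes, the paper gets a global splitting $\eE_{-i}^{**}\cong L_i\boxtimes \N_i$ directly from the discreteness of $\pic(S)$ (forced by $H^1(\O_S)=0$) and a Poincar\'e bundle, which disposes of the twist ambiguity you flag as the main obstacle.
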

\begin{proof}
In this case, \eqref{decompos} gives $$\q_*\left(\eEb|_{X\times \mMb_h(v)^{\C^*}_{(1^r)}}\right)=\bigoplus_{i=0}^{r-1} \eE_{-i}\otimes \t^{-i}, \quad \Psi_i: \eE_{-i} \to \eE_{-i-1}\otimes \cL, \quad  \quad i=0,\dots, r-1.$$ where $\eE_{-i}$ is a flat family of rank 1 torsion free sheaves on $S\times \mMb_h(v)^{\C^*}_{(1^r)}$.  By \cite[Lemma 6.14]{K90} the double duals $\eE_{-i}^{**} $ are locally free, and hence for each $i$ we get a morphism from $ \mMb_h(v)^{\C^*}_{(1^r)}$ to $\pic(S)$. But $H^1(\O_S) = 0$ so $\pic(S)$  is a union of discrete reduced points and hence this morphism is constant on connected components of $\mMb_h(v)^{\C^*}_{(1^r)}$. Pulling back a Poincar\'{e} line bundle shows that $\eE_{-i}^{**} $ restricted to a connected component $\T\subset  \mMb_h(v)^{\C^*}_{(1^r)}$ is isomorphic to $L_i\boxtimes \N_i$ for some line bundle $L_i$ on $S$ and $\N_i$ on  $\T$. Therefore, the restriction of $\eE_{-i} \subset \eE_{-i}^{**} $ to $\T$ is of the form\begin{equation} (\I_{\Z_i}\otimes L_i)\boxtimes \N_i\end{equation}for some subscheme $\Z_i \subset S\times \T$, which must be flat over $\T$ by the flatness of $\eE_{-i}$ and the fact that the $\eE_{-i} \subset \eE_{-i}^{**} $ is fiberwise injective (\cite[Lemma 2.14]{HL10}). Let $n_i$ be the fiberwise length of the subscheme $\Z_i $ over $\T$, which is well-defined by the flatness of $\Z_i$. Let  $\beta_i:=c_1(\cL)+c_1(L_{i+1})-c_1(L_{i}).$ 
Define $$\n:=n_{1},n_{2},\dots,n_{r}, \quad \betab:=\beta_1,\dots,\beta_{r-1}.$$ Then, $\n, \betab$ are clearly compatible with the vector $v$. Let $M_i:=L_i^{-1}\otimes L_{i+1}\otimes \cL$. 
Since the maps $$\Psi_i: \big(\I_{\Z_i}\otimes M_i^{-1}\big)\boxtimes \big(\N_i \otimes \N_{i+1} ^{-1}\big)\to  \I_{\Z_{i+1}}$$ are fiberwise injective over $\T$, there exist subschemes $\Z'_i$ flat over $\T$ such that $$\I_{\Z'_i} =\big(\I_{\Z_i}\otimes M_i^{-1}\big)\boxtimes \big(\N_i \otimes \N_{i+1} ^{-1}\big),$$ and the maps $\Psi_i$ induce the injective  maps $$\I_{\Z'_i}\to  \I_{\Z_{i+1}}.$$
Thus, we obtain a classifying morphism $f:\T\to \S{\n}_\betab$.


Conversely, starting with $\S{\n}_\betab$, where $\n, \betab$ are as in the previous paragraph, we have the universal objects $$\Phi_i: \i{n_i}\to \i{n_{i+1}}_{\beta_i} \quad  \quad 1\le i< r$$ over $S\times \S{\n}_\betab$. Taking double dual we get the sections $$\Phi_i^{**}: \O_{S\times \S{\n}_\betab} \to \O_{S\times \S{\n}_\betab}(\Z_{\beta_i}).$$ 
By the same argument as in the previous paragraph, using $H^1(\O_S)=0$, we can find the line bundles $M_1,\dots, M_{r-1}$ on $S$ and $\N'_1,\dots, \N'_{r-1}$ on $\T$ such that 
$\O(\Z_{\beta_i})\cong M_i \boxtimes \N'_i$, where
as before $M_i$ and $\N'_i$ can be written as $$M_i=L_i^{-1}\otimes L_{i+1}\otimes \cL, \quad \N'_i=\N_i^{-1}\otimes \N_{i+1},$$ and hence $\Phi_i$ is equivalent to \begin{equation} \label{dataphi} \Phi_i:\left(\i{n_i}\otimes L_i
\right)\boxtimes \N_i \to \left(\i{n_{i+1}}\otimes L_{i+1}\otimes \cL\right)\boxtimes \N_{i+1}, \quad \quad 1\le i< r. \end{equation}
By the discussion before the proposition, and the compatibility of $\n, \betab$ with $v$, the maps  \eqref{dataphi} determine a flat family $\cE$ of stable $\C^*$-equivariant sheaves on $X\times \S{\n}_{\betab}$, and hence an $\S{\n}_\betab$-valued point of $\mMb_h(v)_{(1^r)}^{\C^*}$. Thus, we obtain a classifying morphism $g: \S{\n}_\betab \to \mMb_h(v)_{(1^r)}^{\C^*}$ with the image into the component $\T$ (by the choice of $L_i$). One can see by inspection that $f$ and $g$ are inverse of  each other.
\end{proof} 

%

\begin{rmk}
Proposition \ref{threefoldtwofold} in particular shows that if $\n$ and $\betab$ are compatible with a vector $v$ for which $\mMb_h(v)\neq \emptyset$ (for some choice of $\cL$  and $h$), then $\S{\n}_\betab$ is connected. 

\end{rmk}

The following definition is motivated by the proof of Proposition \ref{threefoldtwofold}.
\begin{defi} \label{defJJ} Suppose that $\S{\n}_\betab$ is a component $\T$ of $\mMb_h(v)^{\C^*}$ as in Proposition \ref{threefoldtwofold}. If $\O(\Z_{\beta_i})\cong M_i \boxtimes \N_i$ for $i=1,\dots, r-1$, where $M_i\in \pic(S)$ and $\N_i\in \pic(\S{\n}_\betab)$, then there are line bundles $L_i\in \pic(S)$ (determined by $\T$) such that  $M_i=L_i^{-1}\otimes L_{i+1}\otimes \cL$. Let $\N_0=\O_{\S{\n}_\betab}$ and define \begin{equation}\label{JJ}\J_{i}:=\left(\i{n_i}\otimes L_i \right)\boxtimes \N_{i-1} \quad 1\le i \le r. \end{equation}
By the proof of Proposition \ref{threefoldtwofold}, the maps $\J_i\to \J_{i+1}\otimes \cL$ induced by the universal maps $\Phi_i$ over $\S{n}_\betab$ give rise to a universal family of stable $\C^*$-equivariant sheaves over $X\times \T$. 
 
\end{defi}

\begin{notn}For any coherent sheaves $\F$, $\G$ on $S\times B$ flat over a scheme $B$, and a nonzero integer $a$, we define
$$\langle\F,\G \cdot \t^a \rangle:=e\big(\dR\hom_\pi(\F,\G\cdot\t^a)\big),$$ where $\pi$ is the projection to the second factor of $S\times B$, 
and $e(-)$ denotes the equivariant Euler class. 
\end{notn}

In the following proposition we compare the restriction  of the $\C^{*}$-fixed complex $\Ebf_{\red}$  to the component $\T\cong \S{\n}_\betab$  with the obstruction theory of $\Fb$ of Theorem \ref{abs-r}. We also find an explicit expression for the moving part of $\Eb_{\red}$ restricted to $\T$.
 \begin{prop} \label{fixedpart} Using the isomorphism in Proposition \ref{threefoldtwofold}, we have $\Ebf_{\red}|_{\T}\cong\Fb$ (of Theorem \ref{abs-r}) in the K-group.  As a result, $$[\mMb_h(v)_{(1^r)}]_{\red}^{\vir}=\sum_{\tiny\begin{array}{c}
 \T \cong \S{\n}_\betab \\\text{is a conn. comp. of}\\  \mMb_h(v)_{(1^r)}^{\C^*}\end{array}}[\S{\n}_{\betab}]^{\vir}.$$  Moreover, \begin{align*}&e\big((\Ebm)^{\vee}|_{\T}\big)=
\frac{\prod_{\tiny\begin{array}{c}1\le i, j \le r \\i\neq j-1\end{array}} \left \langle \J_i\cdot \t^{-i}, \J_j \otimes \cL \cdot \t^{-j+1}\right\rangle}{\s^{\kappa}\prod_{\tiny\begin{array}{c}1\le i, j \le r \\i\neq j\end{array}} \left\langle \J_i\cdot \t^{-i}, \J_j \cdot \t^{-j}\right \rangle},\end{align*} where $\J_i$ are given in \eqref{JJ}, and $\kappa=1$ if $\cL=\omega_S$, otherwise $\kappa=0$.
\end{prop}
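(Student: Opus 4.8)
\emph{Proof proposal.} The plan is to run the whole computation in $\C^*$-equivariant $K$-theory of $\T$, and only at the end separate the weight-$0$ part (to be matched with $\Fbv$ of Theorem \ref{abs-r}) from the nonzero-weight part (which yields the Euler class formula). I would first record two inputs. (a) By Definition \ref{defJJ}, over $X\times\T$ the universal sheaf $\eEb$ is the $\C^*$-equivariant sheaf attached to the $\q_*\O_X$-module $\bigoplus_{i=1}^{r}\J_i\cdot\t^{-i}$ with structure maps induced by the $\Phi_i$; each graded piece of the filtration of this module by decreasing weight is annihilated by the fibre coordinate, so $\eEb|_{X\times\T}$ admits an $r$-step filtration with graded quotients $z_*\J_i\cdot\t^{-i}$, giving
\[
[\eEb|_{X\times\T}]=\sum_{i=1}^{r}[z_*\J_i]\cdot\t^{-i}\in K^{\C^*}(X\times\T).
\]
(b) Exactly as in the proof of Proposition \ref{(r)}, from $\dL z^*z_*(-)\cong(-)\oplus(-)\otimes\cL^{-1}\t^{-1}[1]$ (the $\C^*$-weight of $N_{S/X}=\cL\cdot\t$ read off from \eqref{piox}) and adjunction one gets a functorial exact triangle $z_*\dR\hom_S(F,G)\to\dR\hom_X(z_*F,z_*G)\to z_*\dR\hom_S(F,G\otimes\cL\cdot\t)[-1]$, hence $[\dR\hom_X(z_*F,z_*G)]=[\dR\hom_S(F,G)]-[\dR\hom_S(F,G\otimes\cL\cdot\t)]$.

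Combining (a), (b), affineness of $\q$ (so $\dR\bp_*=\dR p_*\circ\q_*$) and bi-additivity of $\dR\hom$ on $K$-theory, I would obtain
\[
[\dR\hom_{\bp}(\eEb|_\T,\eEb|_\T)]=\sum_{i,j=1}^{r}\t^{\,i-j}[\dR\hom_\pi(\J_i,\J_j)]-\sum_{i,j=1}^{r}\t^{\,i-j+1}[\dR\hom_\pi(\J_i,\J_j\otimes\cL)].
\]
Next I would feed this into the splitting \eqref{splittt1}, which gives $[(\Eb_{\red})^\vee]=p_g[\O]-[\tau^{[1,2]}(\dR\hom_{\bp}(\eEb,\eEb))]$, together with the $K$-theoretic identity $[\tau^{[1,2]}(\dR\hom_{\bp}(\eEb,\eEb))]=[\dR\hom_{\bp}(\eEb,\eEb)]-[\hom_{\bp}(\eEb,\eEb)]+[\ext^3_{\bp}(\eEb,\eEb)]$ (the complex has amplitude $[0,3]$) and the identifications $\hom_{\bp}(\eEb,\eEb)=\O$ (stability, identity section) and $\ext^3_{\bp}(\eEb,\eEb)=\kappa\cdot\O\cdot\t$ (Serre duality, using $\omega_X=\q^*(\cL^{-1}\omega_S)\cdot\t^{-1}$). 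Since $h^0$ and $h^3$ here are pulled-back line bundles, this is compatible with restriction to $\T$ and yields
\[
[(\Eb_{\red})^\vee|_\T]=(1+p_g)[\O_\T]-\kappa[\O_\T\cdot\t]-[\dR\hom_{\bp}(\eEb|_\T,\eEb|_\T)].
\]

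Now I would split by weight. The coefficient of $\t^0$ (the fixed part) is $(1+p_g)[\O_\T]-\sum_{i=1}^{r}[\dR\hom_\pi(\J_i,\J_i)]+\sum_{i=1}^{r-1}[\dR\hom_\pi(\J_i,\J_{i+1}\otimes\cL)]$; applying the line-bundle identities built into Definition \ref{defJJ} — namely $\dR\hom_\pi(\J_i,\J_i)\cong\dR\hom_\pi(\i{n_i},\i{n_i})$ (the $L_i$ and the line bundles on $\T$ cancel) and $\dR\hom_\pi(\J_i,\J_{i+1}\otimes\cL)\cong\dR\hom_\pi(\i{n_i},\i{n_{i+1}}_{\beta_i})$ (twist on $S$ equal to $L_i^{-1}\otimes L_{i+1}\otimes\cL=M_i$, and $\O(\Z_{\beta_i})\cong M_i\boxtimes\N_i$, with the factors on $\T$ telescoping) — and comparing with the $K$-class of $\Fbv$ from Theorem \ref{abs-r}, in which $[\,\cdot\,]_0$ removes exactly one copy of $\dR\pi_*\O$, i.e.\ $(1+p_g)[\O_\T]$, one reads off $[(\Ebf_{\red})^\vee|_\T]=[\Fbv]$, hence $\Ebf_{\red}|_\T\cong\Fb$ in the $K$-group. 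As the virtual class depends only on the $K$-class of the perfect obstruction theory, this gives $[\mMb_h(v)_{(1^r)}]^{\vir}_{\red}=\sum_{\T}[\S{\n}_\betab]^{\vir}$. For the Euler class, using $\Ebm=\Ebm_{\red}$ (Remark \ref{redob}) I collect the nonzero-weight part of the displayed $K$-class, namely $-\kappa[\O_\T\cdot\t]-\sum_{i\ne j}\t^{\,i-j}[\dR\hom_\pi(\J_i,\J_j)]+\sum_{j\ne i+1}\t^{\,i-j+1}[\dR\hom_\pi(\J_i,\J_j\otimes\cL)]$, and apply $e(-)$ using $e(\O_\T\cdot\t)=\s^\kappa$, $e(\dR\hom_\pi(\J_i,\J_j)\cdot\t^{i-j})=\langle\J_i\cdot\t^{-i},\J_j\cdot\t^{-j}\rangle$ and $e(\dR\hom_\pi(\J_i,\J_j\otimes\cL)\cdot\t^{i-j+1})=\langle\J_i\cdot\t^{-i},\J_j\otimes\cL\cdot\t^{-j+1}\rangle$; noting $j\ne i+1\iff i\ne j-1$, this reproduces exactly the asserted ratio.

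The step I expect to be the main obstacle is the weight-$0$ identification with $\Fbv$. Beyond the bookkeeping, one must verify that the trace-free structures truly match: the single copy of $\dR\pi_*\O$ excised by $[\,\cdot\,]_0$ in Theorem \ref{abs-r} has to be the same copy produced by the composite $\tr\circ\q_*$ in the construction of $C^\bullet$ in Theorem \ref{prop:red} — this is precisely what couples the reduction of the $3$-fold obstruction theory to the trace-free obstruction theory of the nested Hilbert scheme — and the line-bundle factors coming from the $L_i$ and the Poincar\'e-type bundles on $\T$ must cancel in exactly the pattern dictated by Definition \ref{defJJ}. One also has to check that the truncations $\tau^{\le1}$, $\tau^{[1,2]}$ interact correctly with restriction to $\T$ and with extraction of the $\C^*$-fixed part; at the $K$-theoretic level needed here this is harmless, since $h^0$ and $h^3$ of $\dR\hom_{\bp}(\eEb,\eEb)$ are pulled-back line bundles and the relevant $h^2$ vanishes on the fixed part in both cases $\cL=\omega_S$ and $\cL\ne\omega_S$.
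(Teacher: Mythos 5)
Your proposal is correct, and it reaches the same conclusions as the paper by a genuinely different organization of the argument. The paper proves the proposition by a derived-category diagram chase: for $r=2$ it applies $\dR\hom$ to the short exact sequence \eqref{ses} to build the nine-term diagram \eqref{corners}, takes $\C^*$-fixed (resp.\ moving) parts of that diagram, identifies each corner as an explicit cone (this requires observing that a certain connecting map $z_*\dR\hom(\J_1,\J_1)[-1]\to z_*\dR\hom(\J_2,\J_2)$ vanishes so that the cone splits), and then handles $r>2$ by induction on the filtration \eqref{univeses}. You instead linearize everything in equivariant $K$-theory from the outset: the filtration gives $[\eEb|_{X\times\T}]=\sum_i[z_*\J_i]\t^{-i}$, bi-additivity plus the triangle from \cite[Cor.~11.4]{H06} gives the full $K$-class of $\dR\hom_{\bp}(\eEb,\eEb)$ for all $r$ at once, and the splitting \eqref{splittt1} together with $h^0=\O$, $h^3=\kappa\,\O\cdot\t$ converts this into $[(\Eb_{\red})^\vee|_\T]$; separating weights then yields both claims simultaneously. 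What your route buys is uniformity in $r$ (no induction), no need to analyze connecting maps, and a cleaner derivation of the moving-part Euler class; what it gives up is any derived-category (as opposed to $K$-theoretic) description of $\Ebf_{\red}|_\T$ — but since the proposition only asserts an isomorphism in the $K$-group, and since both the virtual class (by Siebert's theorem, which the paper itself invokes elsewhere) and the equivariant Euler class depend only on $K$-classes, this is exactly enough. Your bookkeeping checks out against the paper's $r=2$ computation (your index normalization $\t^{-i}$ versus the paper's $\t^{-(i-1)}$ is an overall twist that cancels in $\dR\hom$), and the two points you flag as delicate — that $[\,\cdot\,]_0$ in Theorem \ref{abs-r} removes one copy of $\dR\pi_*\O$ of $K$-class $(1+p_g)[\O_\T]$, matching the $p_g[\O]$ from the reduction plus $[\O]$ from $\hom_{\bp}(\eEb,\eEb)$, and that the $L_i$ and Poincar\'e-type twists cancel so that $\dR\hom_\pi(\J_i,\J_{i+1}\otimes\cL)\cong\dR\hom_\pi(\i{n_i},\i{n_{i+1}}_{\beta_i})$ — are indeed the only places requiring care, and both are harmless at the $K$-theoretic level as you say.
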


\begin{proof} \textbf{Step 1:} \emph{($r=2$, fixed part of the obstruction theory)} We first prove the case $r=2$. 
 By the proof of Proposition \ref{threefoldtwofold}, the short exact sequence \eqref{univeses} gives 

\begin{equation}\label{ses}
0\to z_*\J_{2}\otimes \t^{-1} \to \overline{\eE}|_{\T\times X}\to z_*\J_{1} \to 0,
\end{equation} in which  $\J_{i}$ (defined in \eqref{JJ}) carries no $\C^*$-weights. 
Applying $$\dR \hom(\overline{\eE}, -), \quad \dR \hom(-,z_*\,\J_{1}), \quad \dR \hom(-, z_*\, \J_{2}\cdot \t^{-1}) $$ to \eqref{ses}, we get the exact triangles in $D^b(X\times \T)$ filling respectively the middle row,  and the 1st and 2nd  columns of the following commutative diagram:


\begin{equation}\label{corners}\hspace{-.3in}
\xymatrix@=1em{
\dR \hom(z_*\,\J_{1},z_*\,\J_{1})[-1] \ar[d] \ar[r]&  \dR \hom(z_*\,\J_{1},z_*\, \J_{2}\cdot \t^{-1})  \ar[d]& \\ \dR \hom(\overline{\eE}, z_*\, \J_{1})[-1]\ar[d] \ar[r] &\dR \hom(\overline{\eE},z_*\, \J_{2}\cdot \t^{-1})\ar[r]\ar[d]& \dR \hom(\overline{\eE}, \overline{\eE})\\ \dR \hom(z_*\,\J_{2}\cdot \t^{-1},z_*\, \J_{1})[-1]\ar[r]& \dR \hom(z_*\,\J_{2} \cdot \t^{-1},z_*\, \J_{2}\cdot \t^{-1})&}
\end{equation}

For any coherent sheaf $\F$ on $S$, by  \cite[Corollary 11.4]{H06}, we have the following natural exact triangle $$ \F\otimes \cL^{-1}\cdot \t^{-1} [1]\to \dL z^{*}z_{*}\F\to \F,$$ which for any other sheaf $\G$ on $S$, by adjunction, implies the exact triangle $$z_*\,\dR \hom_{S}(\F, \G)\to \dR \hom_{X}(z_*\,\F, z_*\,\G) \to z_*\,\dR \hom_{S}(\F, \G\otimes \cL\cdot \t)[-1].$$ Using this and taking the $\C^*$-fixed part of the diagram \eqref{corners}, we get the commutative diagram


$$
\xymatrix@=1em{
z_* \dR \hom(\J_{1},\J_{1})[-1] \ar[r] \ar[d] & z_*\,\dR \hom(\J_{1}, \J_{2}\otimes \cL)[-1]  \ar[d]& \\ \dR \hom(\overline{\eE}, z_*\, \J_{1})^{\fix}[-1] \ar[d] \ar[r] &\dR \hom(\overline{\eE},z_*\, \J_{2}\cdot \t^{-1})^{\fix}  \ar[r] \ar[d]& \dR \hom(\overline{\eE}, \overline{\eE})^{\fix} \\ 0\ar[r] &z_*\dR \hom(\J_{2}, \J_{2}) &}
$$ in which the middle row and the 1st and 2nd columns are exact triangles. We conclude that \begin{align*}\dR \hom(\overline{\eE}, z_*\, \J_{1})^{\fix}&\cong z_* \dR \hom(\J_{1},\J_{1})\\
\dR \hom(\overline{\eE}, z_*\,\J_{2}\cdot \t^{-1} )^{\fix}&\cong \cone\left(z_*\, \dR \hom(\J_{2},\J_{2})\to z_* \dR \hom(\J_{1},\J_{2}\otimes \cL) \right)[-1].
\end{align*} From this, and noting that the induced map $z_*\dR \hom(\J_{1},\J_{1})[-1]\to z_*\dR \hom(\J_{2},\J_{2})$ in the diagram is zero, we see that 
 \begin{align*}&\dR \hom(\overline{\eE},  \eEb)^{\fix}\cong \cone \left(  z_* \dR \hom(\J_{1},\J_{1})[-1] \to \dR \hom(\overline{\eE}, z_*\,\J_{2}\cdot \t^{-1} )^{\fix}\right)\cong \\
& \cone \Big(  z_* \dR \hom(\J_{1},\J_{1}) \oplus z_*\, \dR \hom(\J_{2},\J_{2})\to z_* \dR \hom(\J_{1},\J_{2}\otimes \cL) \Big)[-1].
\end{align*} 
Taking trace free part, applying $\dR p_{*}$, and shifting by 1, we get
\begin{align*}&(\Ebf_{\red})^\vee|_{\T}\cong \\& \cone \Big( \left[  \dR \hom_{p}(\J_{1},\J_{1}) \oplus \dR \hom_{p}(\J_{2},\J_{2})\right]_0\to \dR \hom_{p}(\J_{1},\J_{2}\otimes \cL) \Big)\cong \\
& \cone \Big( \left[  \dR \hom_\pi(\i{n_1},\i{n_1}) \oplus \dR \hom_\pi(\i{n_2},\i{n_2})\right]_0\to \dR \hom_\pi(\i{n_1},\i{n_2}_\beta)\Big)\cong \Fbv.
\end{align*}
This proves the claim about the fixed part of the obstruction theory when $r=2$.

\textbf{Step 2:}   \emph{($r=2$, moving part of the obstruction theory)} We use diagram \eqref{corners} in Step 1 again, but this time we take the moving parts:

$$
\xymatrix@=1em{
z_*\,\dR \hom(\J_{1}, \J_{2}\cdot \t^{-1})  \ar[d]&&z_* \dR \hom(\J_{1},\J_{1}\otimes \cL\cdot \t) [-1] \ar[d]\\  \dR \hom(\overline{\eE},z_*\, \J_{2}\cdot \t^{-1})^{\mov}  \ar[r] \ar[d]& \dR \hom(\overline{\eE}, \overline{\eE})^{\mov}\ar[r]& \dR \hom(\overline{\eE}, z_*\, \J_{1})^{\mov}\ar[d]\\ z_*\dR \hom(\J_{2}, \J_{2}\otimes \cL\cdot \t) [-1] && \Ab }
$$ in which $$\Ab:=\cone \Big(z_*\,\dR \hom(\J_{2}, \J_{1}\otimes \cL \cdot \t^2)[-2]\to z_*\,\dR \hom(\J_{2}, \J_{1}\cdot \t)\Big),$$ and the middle row and the 1st and 2nd columns are exact triangles. We conclude that \begin{align*}\dR \hom(\overline{\eE}, z_*\, \J_{1})^{\mov}&\cong \cone\big(\Ab\to  z_* \dR \hom(\J_{1},\J_{1}\otimes \cL\cdot \t)\big)[-1]\\
\dR \hom(\overline{\eE}, z_*\,\J_{2}\cdot \t^{-1} )^{\mov}&\cong \cone\left(z_*\, \dR \hom(\J_{2},\J_{2}\otimes \cL\cdot \t)[-2]\to z_* \dR \hom(\J_{1},\J_{2}\cdot \t^{-1}) \right).
\end{align*} Pushing forward, shifting by 1, and taking the equivariant Euler class, we get

\begin{align*}e(\dR\hom_{\bp}(\overline{\eE},\overline{\eE})^{\mov}[1]|_{\T})=\frac{\langle \J_{1},\J_{1}\otimes \cL\cdot  \t\rangle \cdot \langle \J_{2},\J_{2}\otimes \cL\cdot  \t\rangle \cdot \langle \J_{2},\J_{1}\otimes \cL \cdot \t^2 \rangle}{\langle \J_{1},\J_{2}\cdot  \t^{-1}\rangle  \cdot \langle \J_{2},\J_{1}\cdot \t\rangle}. \end{align*}
Also note that $$e(\ext^3_{\bp}(\overline{\eE},\overline{\eE}))=\begin{cases}1 & \cL\neq \omega_S \\ e(\O_{\mMw}\cdot \t)=\s & \cL=\omega_S.\end{cases}$$
This proves the claim about the moving part of the obstruction theory when $r=2$.

\textbf{Step 3:} \emph{($r>2$)}  Again by the proof of Proposition \ref{threefoldtwofold}, the short exact sequence \eqref{univeses} gives 
$$
0\to \eEb_1'|_{\T\times X}\otimes \t^{-1} \to \overline{\eE}|_{\T\times X} \to z_*\,\J_{1}\to 0,$$

%
One can then repeat the argument of Step 1 and Step 2, by replacing  $z_*\,\J_{2}$ with $\eEb'|_{\T\times X}$, and use the induction on $r$ to complete the proof of the proposition.

\end{proof}


\begin{cor} \label{moving} 
Suppose that $r=2$, $\D=c_1(\cL)$ then
\begin{align*}&e\big((\Ebm)^{\vee}|_{\T}\big)=
\frac{\langle \i{n_1},\i{n_1}(\D)\cdot  \t\rangle \cdot \langle \i{n_2},\i{n_2}(\D) \cdot  \t\rangle \cdot \langle \i{n_2}_\beta,\i{n_1}(2\D) \cdot \t^{2}\rangle}{\s^\kappa\cdot \langle \i{n_1},\i{n_2}_\beta(-\D)\cdot \t^{-1}\rangle\cdot \langle \i{n_2}_\beta,\i{n_1}(\D) \cdot \t\rangle}.\end{align*}\end{cor}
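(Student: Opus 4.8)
The plan is to specialize the general formula for $e\big((\Ebm)^{\vee}|_{\T}\big)$ obtained in Proposition \ref{fixedpart} to the case $r=2$, and then translate the bracket expressions from the $\J_i$-language into the $\i{n_i}$-language using the explicit description of $\J_1,\J_2$ in Definition \ref{defJJ}. For $r=2$ the index sets in Proposition \ref{fixedpart} are very small: the numerator runs over pairs $(i,j)\in\{1,2\}^2$ with $i\neq j-1$, i.e. $(1,1),(2,1),(2,2)$, and the denominator over pairs with $i\neq j$, i.e. $(1,2),(2,1)$. So the formula reads
\begin{align*}
e\big((\Ebm)^{\vee}|_{\T}\big)=\frac{\langle \J_1\cdot\t^{-1},\J_1\otimes\cL\rangle\,\langle \J_2\cdot\t^{-2},\J_1\otimes\cL\cdot\t^{-1}\rangle\,\langle \J_2\cdot\t^{-2},\J_2\otimes\cL\cdot\t^{-1}\rangle}{\s^\kappa\,\langle \J_1\cdot\t^{-1},\J_2\cdot\t^{-2}\rangle\,\langle \J_2\cdot\t^{-2},\J_1\cdot\t^{-1}\rangle},
\end{align*}
which after collecting the $\t$-weights inside each bracket is exactly the $r=2$ expression already displayed at the end of Step 2 of the proof of Proposition \ref{fixedpart}, namely
\begin{align*}
e\big((\Ebm)^{\vee}|_{\T}\big)=\frac{1}{\s^\kappa}\cdot\frac{\langle\J_1,\J_1\otimes\cL\cdot\t\rangle\,\langle\J_2,\J_2\otimes\cL\cdot\t\rangle\,\langle\J_2,\J_1\otimes\cL\cdot\t^2\rangle}{\langle\J_1,\J_2\cdot\t^{-1}\rangle\,\langle\J_2,\J_1\cdot\t\rangle}.
\end{align*}

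Next I would substitute $\J_1=(\i{n_1}\otimes L_1)\boxtimes\N_0$ and $\J_2=(\i{n_2}\otimes L_2)\boxtimes\N_1$ from \eqref{JJ}. The key point is that the bracket $\langle\F,\G\cdot\t^a\rangle=e(\dR\hom_\pi(\F,\G\cdot\t^a))$ only depends on $\mathcal{H}om$-type data on the $S$-factor twisted by line bundles pulled back from $S$ and from $\T$; the $\N_i$-twists on the $\T$-factor cancel in each $\dR\hom$ because every bracket pairs a $\J$ with another $\J$ (the $\N$'s enter as $\N_a\otimes\N_b^{-1}$-type combinations that match the definition $M_i=L_i^{-1}\otimes L_{i+1}\otimes\cL$ with $\N_0=\O$). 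So in each of the five brackets the $L_i$'s and $\cL$ recombine: using $M_1=L_1^{-1}\otimes L_2\otimes\cL$, one gets $\dR\hom_\pi(\i{n_1}\otimes L_1,(\i{n_2}\otimes L_2\otimes\cL))\cong\dR\hom_\pi(\i{n_1},\i{n_2}\otimes M_1)\cong\dR\hom_\pi(\i{n_1},\i{n_2}_{\beta})$ by the identification $\i{n_2}_{\beta}=\i{n_2}\otimes\O(\Z_\beta)$ and $\O(\Z_\beta)\cong M_1\boxtimes\N_1$, and similarly for the others. Writing everything in terms of $\D=c_1(\cL)$ and absorbing the line-bundle twists $L_i^{-1}\otimes L_j$ into the $\beta$-twist where appropriate:
\begin{align*}
\langle\J_1,\J_1\otimes\cL\cdot\t\rangle&=\langle\i{n_1},\i{n_1}(\D)\cdot\t\rangle,\\
\langle\J_2,\J_2\otimes\cL\cdot\t\rangle&=\langle\i{n_2},\i{n_2}(\D)\cdot\t\rangle,\\
\langle\J_2,\J_1\otimes\cL\cdot\t^2\rangle&=\langle\i{n_2}_\beta,\i{n_1}(2\D)\cdot\t^2\rangle,\\
\langle\J_1,\J_2\cdot\t^{-1}\rangle&=\langle\i{n_1},\i{n_2}_\beta(-\D)\cdot\t^{-1}\rangle,\\
\langle\J_2,\J_1\cdot\t\rangle&=\langle\i{n_2}_\beta,\i{n_1}(\D)\cdot\t\rangle.
\end{align*}
Substituting these into the displayed $r=2$ formula gives precisely the asserted expression.

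The bookkeeping step I expect to be the main obstacle is keeping the $L_i$-twists and the $\beta$-twists consistent: one must be careful that $\beta_1=c_1(\cL)+c_1(L_2)-c_1(L_1)$ is the class of $\Z_\beta$, that $\i{n_2}_\beta=\i{n_2}\otimes\O(\Z_\beta)$, and that the appearance of $\i{n_2}_\beta$ versus $\i{n_2}$ in each bracket is dictated by whether an odd or even power of $M_1$ (equivalently $\O(\Z_\beta)$) shows up after recombining the $L_i$'s — in the bracket $\langle\J_2,\J_1\otimes\cL\cdot\t^2\rangle$ the first slot contributes an $L_2$ and the second an $L_1\otimes\cL$, giving net $L_1\otimes L_2^{-1}\otimes\cL=M_1^{-1}\otimes\cL^2$ worth of twist on the $S$-factor applied to $\dR\hom_\pi(\i{n_2},\i{n_1})$, which one rewrites as a $\beta$-twist on $\i{n_2}$ plus a $2\D$ twist on $\i{n_1}$; similarly the asymmetry between the two denominator brackets ($(-\D)$-twist in one, none in the other beyond the $\beta$) comes from which slot carries the extra $\cL$. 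Everything else is the formal manipulation of equivariant Euler classes of $\dR\hom_\pi$ already justified in the proof of Proposition \ref{fixedpart}, so the corollary is immediate once the translation dictionary is set up correctly.
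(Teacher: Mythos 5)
Your proposal is correct and follows essentially the same route as the paper: the paper's own proof is a one-line appeal to the identification $\i{n_2}_\beta=\i{n_2}\otimes \cL\otimes L_2\otimes L_1^{-1}\otimes p^*\N_1$ from Definition \ref{defJJ} followed by substitution into the $r=2$ case of Proposition \ref{fixedpart}, which is exactly your enumeration of the index pairs $(1,1),(2,1),(2,2)$ and $(1,2),(2,1)$ plus the translation dictionary. Your five bracket identifications all check out, and the extra bookkeeping you spell out (which slot absorbs the $\beta$-twist versus the $\pm\D$-twist) is just the detail the paper leaves implicit.
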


\begin{proof} 
By Definition \ref{defJJ}, $\i{n_2}_\beta=\i{n_2}\otimes \cL \otimes L_2 \otimes L_1^{-1}\otimes p^*\N_1$. The result then follows from the formula in Proposition \ref{fixedpart} when $r=2$.

\end{proof}

 \begin{cor} \label{movfix} Suppose that $r=2$. Using the notation of Propositions \ref{threefoldtwofold} and \ref{fixedpart}, Corollary \ref{moving} and Definition \ref{invs}, we have 
\begin{align*}\DT^{\cL}_h(v;\alpha)_{(1^2)}=
\sum_{
 \T \cong \S{n_1,n_2}_\beta}\frac{(-1)^{-\D\cdot \beta-K_S\cdot \D/2+3\D^2/2-\kappa}}{2^{\chi(\cL^2)}(-\s)^{\chi(\cL^2)+\chi(\cL)-\chi(\cL^{-1})-\kappa}}\int_{[\S{n_1,n_2}_\beta]^{\vir}}
\alpha \cup \mathcal{Q}_\T.
\end{align*}
\begin{align*}
\DT^{\cL}_h(v)_{(1^2)}&=\sum_{
 \T \cong \S{n_1,n_2}_\beta}\chi^{\vir}(\S{n_1,n_2}_\beta)=
\sum_{
 \T \cong \S{n_1,n_2}_\beta}\sN_S(n_1,n_2,\beta;\cP_\T) ,\end{align*}
  where all sums are over the connected components $\T \cong \S{n_1,n_2}_\beta$ of  $\mMb_h(v)_{(1^2)}^{\C^*}$, and for any such $n_1,n_2$ and $\beta$, 
  $$\mathcal{Q}_\T:=e(\sT^{\cL\cdot \t}_{\S{n_1}})\cdot e(\sT^{\cL\cdot \t}_{\S{n_2}})\cdot\frac { e(\sG_{\beta; \omega_S\otimes \cL^{-1}}\cdot \t^{-1})\cdot e(\sG_{\beta; \cL^{-1}}\cdot \t^{-1})\cdot e(\sK^{n_1,n_2}_{\beta; \omega_S\otimes \cL^{-2}}\cdot \t^{-2})}{  e(\sK^{n_1,n_2}_{\beta;\omega_S\otimes \cL^{-1}}\cdot \t^{-1})  \cdot e(\sK^{n_1,n_2}_{\beta; \cL^{-1}}\cdot \t^{-1})\cdot e(\sG_{\beta; \omega_S\otimes \cL^{-2}}\cdot \t^{-2})},$$ 
  $$\cP_{\T}:=c\left(T_{\S{n_1}}\right)\cup c\left(T_{\S{n_2}}\right)\cup\frac{ c\left(\sG_{\beta;\O_S}\right)}{c\left(\sK^{n_1,n_2}_{\beta;\O_S}\right)}.$$

\end{cor}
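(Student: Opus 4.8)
The plan is to reduce both formulas to the component-wise data already assembled and then to a Hirzebruch--Riemann--Roch bookkeeping. Since $r=2$, Proposition~\ref{virdec} splits $[\mMb_h(v)^{\C^*}]^{\vir}_{\red}$ into its $\lambda=(2)$ and $\lambda=(1^2)$ pieces, and Proposition~\ref{fixedpart} gives $[\mMb_h(v)^{\C^*}_{(1^2)}]^{\vir}_{\red}=\sum_{\T}[\S{n_1,n_2}_\beta]^{\vir}$, the sum being over the connected components $\T\cong\S{n_1,n_2}_\beta$. Inserting this decomposition into Definition~\ref{DThv4} reduces the two claimed formulas to the identifications, on a fixed component $\T$, of $e((\Ebm)^{\vee}|_{\T})^{-1}$ with a universal scalar times $\mathcal{Q}_\T$, and of $c((\Ebf_{\red})^{\vee}|_{\T})$ with $\cP_\T$.

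For the integer invariant, Proposition~\ref{fixedpart} gives $(\Ebf_{\red})^{\vee}|_{\T}\cong\Fbv$ in the $K$-group, so $\int_{[\S{n_1,n_2}_\beta]^{\vir}}c((\Ebf_{\red})^{\vee}|_{\T})=\int_{[\S{n_1,n_2}_\beta]^{\vir}}c(\Fbv)$; this equals $\chi^{\vir}(\S{n_1,n_2}_\beta)$ by the definition of the virtual Euler characteristic (Remark~\ref{virnum}), which is the first equality. For the second, I would unwind $\Fbv$ from the cone description of Theorem~\ref{abs-r}: in $K$-theory $\Fbv=\dR\hom_\pi(\i{n_1},\i{n_2}_\beta)-\dR\hom_\pi(\i{n_1},\i{n_1})-\dR\hom_\pi(\i{n_2},\i{n_2})+\dR\pi_*\O_S$. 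Substituting $\dR\hom_\pi(\i{n_i},\i{n_i})=\dR\pi_*\O_S-T_{\S{n_i}}$ and, via the short exact sequence $0\to\O_S\to\O_S(\Z_\beta)\to\O_S(\Z_\beta)|_{\Z_\beta}\to0$, $\dR\hom_\pi(\i{n_1},\i{n_2}_\beta)=\dR\pi_*\O_S+\sG_{\beta;\O_S}-\sK^{n_1,n_2}_{\beta;\O_S}$, the $\dR\pi_*\O_S$ contributions cancel (consistently with the dimension formula in Theorem~\ref{abs-r}) and $\Fbv=T_{\S{n_1}}+T_{\S{n_2}}+\sG_{\beta;\O_S}-\sK^{n_1,n_2}_{\beta;\O_S}$. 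Hence $c(\Fbv)=\cP_\T$ and the integral is $\sN_S(n_1,n_2,\beta;\cP_\T)$ by Definition~\ref{invs}.

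For the $\alpha$-invariant I would start from the explicit expression of Corollary~\ref{moving} for $e((\Ebm)^{\vee}|_{\T})$ and rewrite each factor $\langle\F,\G\cdot\t^a\rangle=e(\dR\hom_\pi(\F,\G\cdot\t^a))$ using the classes of Definition~\ref{invs}. The diagonal factors $\langle\i{n_i},\i{n_i}(\D)\t\rangle$ become $e(\dR\pi_*(\cL\cdot\t))/e(\sT^{\cL\cdot\t}_{\S{n_i}})$ directly. For the off-diagonal factors I first apply relative Grothendieck--Verdier duality along $\pi\colon S\times\T\to\T$ to flip the arguments, e.g. $\dR\hom_\pi(\i{n_2}_\beta,\i{n_1}\otimes\cL^{k}\cdot\t^{k})\cong\dR\hom_\pi(\i{n_1},\i{n_2}_\beta\otimes\omega_S\otimes\cL^{-k}\cdot\t^{-k})^{\vee}[-2]$; since $e(C^{\vee})=(-1)^{\rank C}e(C)$ (as in Lemma~\ref{oylah}) and an even shift does not affect $e$, this contributes a sign and, after the same short exact sequence as above, a $\sK^{n_1,n_2}_{\beta;\,\omega_S\otimes\cL^{-k}}\cdot\t^{-k}$-factor and a $\sG_{\beta;\,\omega_S\otimes\cL^{-k}}\cdot\t^{-k}$-factor; the factor $\langle\i{n_1},\i{n_2}_\beta(-\D)\t^{-1}\rangle$ is handled without flipping and gives $\sK^{n_1,n_2}_{\beta;\,\cL^{-1}}\cdot\t^{-1}$ and $\sG_{\beta;\,\cL^{-1}}\cdot\t^{-1}$. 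The product of all the surviving $e(\sT)$, $e(\sG)$, $e(\sK)$ factors is then exactly $\mathcal{Q}_\T$, and every remaining factor is of the form $e(\dR\pi_*(N\cdot\t^{w}))=(w\s)^{\chi(S,N)}$ for line bundles $N$ on $S$; evaluating these scalars with Riemann--Roch for $\chi(\cL^{\pm1})$ and $\chi(\cL^2)$ and with Serre duality $\chi(S,N)=\chi(S,\omega_S\otimes N^{-1})$ produces the prefactor $\dfrac{(-1)^{-\D\cdot\beta-K_S\cdot\D/2+3\D^2/2-\kappa}}{2^{\chi(\cL^2)}(-\s)^{\chi(\cL^2)+\chi(\cL)-\chi(\cL^{-1})-\kappa}}$, where the factor $2^{\chi(\cL^2)}$ arises from the unique weight-$2$ factor $\langle\i{n_2}_\beta,\i{n_1}(2\D)\t^2\rangle$, and the sign is the parity of $\rho_1-\rho_2+\kappa$ with $\rho_k=\rank\dR\hom_\pi(\i{n_1},\i{n_2}_\beta\otimes\omega_S\otimes\cL^{-k})$ computed fibrewise.

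The main obstacle is this last bookkeeping. The identifications with $\cP_\T$ and with $\mathcal{Q}_\T$ are formal $K$-theoretic manipulations, but pinning down the scalar prefactor in the $\alpha$-formula requires computing $\rho_1-\rho_2$ by Hirzebruch--Riemann--Roch (it depends on $\beta$, hence on the component, through $\ch(\i{n_2}_\beta)=(1,\beta,\beta^2/2-n_2)$) and checking the congruence $\rho_1-\rho_2+\kappa\equiv-\D\cdot\beta-K_S\cdot\D/2+3\D^2/2-\kappa\pmod2$, which uses the parity relation $\D\cdot(\D-K_S)\in2\ZZ$; one must likewise track each equivariant weight carefully through the Grothendieck--Verdier flips to recover the exponents of $\s$ and the single factor of $2$.
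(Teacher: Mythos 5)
Your proposal is correct and follows essentially the same route as the paper: the paper's proof likewise reduces via Propositions \ref{virdec} and \ref{fixedpart} to the component $\T$, and its two key identities are exactly your Grothendieck--Verdier flip with the sign from Lemma \ref{oylah} and the relation $e(\sK^{n_1,n_2}_{\beta;M}\cdot\t^a)/e(\sG_{\beta;M}\cdot\t^a)=(a\s)^{\chi(M)}/\langle\i{n_1},\i{n_2}_\beta\otimes M\cdot\t^a\rangle$ coming from the structure sequence of $\Z_\beta$, while the second formula is handled by the same $K$-theoretic identification $\Fbv=T_{\S{n_1}}+T_{\S{n_2}}+\sG_{\beta;\O_S}-\sK^{n_1,n_2}_{\beta;\O_S}$. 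The remaining sign/prefactor bookkeeping you flag does work out: by Serre duality and Hirzebruch--Riemann--Roch one finds $\rho_1-\rho_2=K_S\cdot\D/2+\D\cdot\beta-3\D^2/2$, which is the negative of the claimed exponent and hence congruent to it mod $2$.
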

\begin{proof}
The formulas are the direct corollary of Propositions \ref{threefoldtwofold} and \ref{fixedpart} and Corollary \ref{moving}. The first formula follows from the following identities: 
\begin{enumerate}[1.]
\item By Grothendieck-Verdier duality and Lemma \ref{oylah}, for any coherent sheaves $\F$, $\G$ on $S\times \S{n_1,n_2}_\beta$ flat over $\S{n_1,n_2}_\beta$ we have
$$\langle\F,\G\cdot \t^a \rangle=(-1)^{v}\langle\G,\F \otimes \omega_S \cdot \t^{-a} \rangle,$$ where $v$ is the rank of the complex $\dR\hom_\pi(\F,\G )$ and $0\neq a\in \ZZ$.

\item For any $0\neq a\in \ZZ$ and  and $M\in\pic(S)$, $$\frac{e(\sK^{n_1,n_2}_{\beta;M}\otimes \t^a)}{e(\sG_{\beta;M}\otimes \t^a)}=\frac{(a\,\s)^{\chi(M)}}{\langle \i{n_1},\i{n_2}_{\beta}\otimes M \cdot \t^{a}\rangle},\quad \quad  \frac{e(\sT^{M\cdot \t}_{\S{n_i}})}{\s^{\chi(M)}}=\frac{1}{\langle \i{n_i},\i{n_i}\otimes M\cdot  \t\rangle}.$$

\end{enumerate}
For the second formula note that by definition $$ \DT^{\cL}_h(v)_{(1^2)}=
\sum_{\T \cong \S{n_1,n_2}_\beta}\int_{[\S{n_1,n_2}_\beta]^{\vir}}\frac{c\left(\dR\hom_\pi\left(\i{n_1},\i{n_2}_{\beta}\right)\right)}{c\left(\dR\hom_\pi\left(\i{n_1},\i{n_1}\right)\right)\cdot c\left(\dR\hom_\pi\left(\i{n_2},\i{n_2}\right)\right)}.$$ Then we use $$T_{\S{n_i}}\cong \ext^1_\pi\left(\i{n_i},\i{n_i}\right), \quad c\left(\ext^{j\neq 1}_\pi\left(\i{n_i},\i{n_i}\right)\right)=c\left(\ext^{j\neq 1}_\pi\left(\i{n_i},\i{n_i}\right)_0\right)=1.$$

\end{proof}
 

\subsection{Complete Intersections} Suppose that $S\subset \mathbb{P}^{k+2}$ with $k\ge 1$ is a very general complete intersection of type $(d_1,\dots,d_k)\neq (2), (3), (2,2)$ and $d_i>1$. Let $r=2$, $h=\O_S(1)$, and $\n=n_1,n_2$ and $\beta$ be compatible with the vector $v$ as defined in Definition \ref{compatible}. Then, $\omega_S=\O_S(-k-3+d_1+\cdots+d_k)$ and by the genericity $\pic S=\ZZ$ (see \cite{L21}).
 If $\cL=\O(\ell)$ and $L_i=\O(l_i)$ for $i=1,2$ we must have (by the conditions before Definition \ref{compatible} and \eqref{antican})
$$\ell \ge -k-3+d_1+\cdots+d_k,\quad \quad l_1>l_2,\quad \quad    \ell+l_2\ge l_1,$$ and if $\ell+l_2= l_1$ then we must have $n_1\ge n_2$.
Therefore, we get \begin{equation}\label{ineq} 0<l_1-l_2\le \ell.\end{equation}


Note that in this case $\beta=c_1(\O_S(\ell+l_2-l_1))$, and that $\beta^D:=K_S-\beta$ is effective if \begin{equation}\label{effcurve} -k-3+d_1+\cdots+d_k-\ell+l_1-l_2 \ge 0.  \end{equation} These observations lead to the following proposition:

\begin{prop} \label{CI}
Suppose that $v=(r=2,\gamma=c_1(\O(2g+1)),m)$. 
\begin{enumerate} [1.]
\item If $\ell \le 0$ then $\mMb_h(v)^{\C^*}_{(1^2)}=\emptyset$.
\item If $\ell=1$ then $l_1=g+1,\; l_2=g$ and  $$\mMb_h(v)^{\C^*}_{(1^2)}=\coprod_{\tiny \begin{array}{c}n_1+n_2=-m+b \\ n_1\ge n_2\end{array}}\S{n_1\ge n_2},$$ where $b=d_1\cdots d_k\,(g^2+g+1/2)$.
\item Suppose that $d_1+\cdots+d_k\ge k+3$, $\ell>0$ and $l_1, l_2$ are so that $2g+1=l_1+l_2$, condition \eqref{ineq} is satisfied, but  condition \eqref{effcurve} is not satisfied. If $\S{n_1,n_2}_\beta$ is a nonempty component of $\mMb_h(v)^{\C^*}_{(1^2)}$ with $\beta=c_1(\O_S(\ell+l_2-l_1))$ and $n_1+n_2=(l_1^2/2+l_2^2/2)d_1\cdots d_k-m$,  then $[\S{n_1,n_2}_\beta]^{\vir}=0.$
\end{enumerate}
 \end{prop}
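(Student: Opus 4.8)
The plan is to dispatch parts (1) and (2) directly from the numerical constraint \eqref{ineq} and the compatibility conditions of Definition \ref{compatible} that have already been extracted for type II components with $r=2$, and to reduce part (3) to the vanishing statement Theorem \ref{reducedclass}. The whole proposition is essentially bookkeeping once this groundwork is in place, so I do not anticipate a genuine obstacle --- only the need to keep the various twists straight.

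For part (1), a component of $\mMb_h(v)^{\C^*}_{(1^2)}$ determines integers $l_1,l_2$ with $L_i=\O(l_i)$, and \eqref{ineq} reads $0<l_1-l_2\le \ell$; if $\ell\le 0$ this is impossible, so there is no component and $\mMb_h(v)^{\C^*}_{(1^2)}=\emptyset$. For part (2), setting $\ell=1$ in \eqref{ineq} forces $l_1-l_2=1$, and combining with the compatibility relation $l_1+l_2=2g+1$ (coming from $\gamma=c_1(L_1)+c_1(L_2)$) gives $l_1=g+1$, $l_2=g$, whence $\beta=c_1(\O_S(\ell+l_2-l_1))=0$ and $M_1=L_1^{-1}\otimes L_2\otimes\cL\cong\O_S$. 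Thus every such component is a nested Hilbert scheme of points $\S{n_1,n_2}$; the injectivity condition of Definition \ref{compatible} (here $\d(M_1)=0$) forces $n_2\le n_1$, and the compatibility relation for $m$ gives $n_1+n_2=\tfrac12\big((g+1)^2+g^2\big)d_1\cdots d_k-m=(g^2+g+\tfrac12)d_1\cdots d_k-m=-m+b$. Conversely, for every $n_1\ge n_2\ge 0$ with $n_1+n_2=-m+b$ the scheme $\S{n_1,n_2}$ is nonempty (take $Z_2\subseteq Z_1$ with $\phi_1$ the natural inclusion $I_{Z_1}\hookrightarrow I_{Z_2}$), and by the construction preceding Proposition \ref{threefoldtwofold} together with the connectedness remark following it, it occurs as a single connected component; this yields the asserted disjoint union.

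For part (3), the plan is to verify the three hypotheses of Theorem \ref{reducedclass} for the line bundle $L$ with $c_1(L)=\beta$ and then invoke that theorem. Since $\pic S=\ZZ$ is generated by $h$, this $L$ is unique, namely $L=\O_S(\ell+l_2-l_1)$, while $\omega_S=\O_S(d_1+\cdots+d_k-k-3)$, and on such a surface $|\O_S(a)|\neq\emptyset$ precisely when $a\ge 0$. First, $d_1+\cdots+d_k\ge k+3$ gives $\omega_S=\O_S(a)$ with $a\ge 0$, hence $p_g(S)=h^0(\omega_S)>0$. Second, \eqref{ineq} gives $\ell+l_2-l_1=\ell-(l_1-l_2)\ge 0$, so $|L|\neq\emptyset$. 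Third, $\omega_S\otimes L^{-1}=\O_S\big(d_1+\cdots+d_k-k-3-\ell+l_1-l_2\big)$, and the hypothesis that \eqref{effcurve} fails says exactly that this exponent is negative, so $|\omega_S\otimes L^{-1}|=\emptyset$. Theorem \ref{reducedclass} then gives $[\S{n_1,n_2}_\beta]^{\vir}=0$.

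The one point deserving care --- hence the closest thing to a main obstacle --- is checking that the data attached to the component, namely $\beta=c_1(\O_S(\ell+l_2-l_1))$ and $n_1+n_2=(l_1^2/2+l_2^2/2)d_1\cdots d_k-m$, are exactly the compatibility conditions of Definition \ref{compatible}, so that the component really is a nested Hilbert scheme $\S{n_1,n_2}_\beta$ to which Theorem \ref{reducedclass} applies; and, in part (2), that $\ell+l_2-l_1\ge 0$ rather than $>0$ already suffices for $|L|\neq\emptyset$ since $|\O_S|\neq\emptyset$. These checks are routine but should be spelled out.
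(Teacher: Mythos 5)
Your proposal is correct and follows exactly the paper's own (three-sentence) proof: parts (1) and (2) from \eqref{ineq} together with the compatibility conditions of Definition \ref{compatible}, and part (3) by checking the hypotheses of Theorem \ref{reducedclass}. The extra details you spell out (the converse construction for part (2) and the verification that $|L|\neq\emptyset$ while $|\omega_S\otimes L^{-1}|=\emptyset$ in part (3)) are exactly the routine checks the paper leaves implicit.
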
 
 \begin{proof} Part 1 follows immediately from \eqref{ineq}. Part 2 follows from \eqref{ineq} and Definition \ref{compatible}. Part 3 follows from Theorem \ref{reducedclass}.
 
 \end{proof}

\begin{cor} \label{cor:CI}
In the notation of Proposition \ref{CI} if $\cL=\omega_S$ then
\begin{enumerate} [1.]
\item If $S$ is a Fano complete intersection  or a $K3$ surface i.e. when $d_1+\cdots+ d_k \le k+3$ then $\mMw_h(v)^{\C^*}_{(1^2)}=\emptyset$.
\item If $S$ is isomorphic to one of the following five complete intersection types $$(5), \; (3,3),\; (4,2), \; (3,2,2),\; (2,2,2,2)$$ then $\mMw_h(v)^{\C^*}_{(1^2)}$ is a disjoint union of the nested Hilbert schemes of points as in Proposition \ref{CI} item (2).
\item If $d_1+\cdots+d_k\ge k+4$ and if $\S{n_1,n_2}_\beta$ is a nonempty component of $\mMw_h(v)^{\C^*}_{(1^2)}$, then condition \eqref{effcurve} is always satisfied.
\end{enumerate}
 \end{cor}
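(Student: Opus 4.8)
The plan is to obtain all three assertions by specializing Proposition \ref{CI} to the case $\cL = \omega_S$. The one input needed is the adjunction formula for complete intersections, $\omega_S \cong \O_S(d_1 + \cdots + d_k - k - 3)$; thus, in the notation of Proposition \ref{CI} where $\cL = \O(\ell)$, the hypothesis $\cL = \omega_S$ is precisely the statement
$$\ell = d_1 + \cdots + d_k - k - 3.$$
In particular $\ell \le 0 \iff d_1 + \cdots + d_k \le k+3$ and $\ell \ge 1 \iff d_1 + \cdots + d_k \ge k+4$, which is how the numerical hypotheses of the corollary translate into the hypotheses of Proposition \ref{CI}.

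For part (1), the hypothesis $d_1 + \cdots + d_k \le k+3$ gives $\ell \le 0$, so Proposition \ref{CI}(1) immediately yields $\mMw_h(v)^{\C^*}_{(1^2)} = \emptyset$. (Within the running assumptions $d_i > 1$ and $(d_1,\dots,d_k) \neq (2),(3),(2,2)$ only the $K3$ types actually occur here, but the argument is insensitive to this distinction.) For part (2), I would first record that each of the five listed types satisfies $\ell = 1$, e.g.\ $5-1-3 = 1$, $6-2-3 = 1$, $7-3-3 = 1$, $8-4-3 = 1$; and, if one wishes, note conversely that $d_1 + \cdots + d_k = k+4$ with all $d_i \ge 2$ forces $2k \le k+4$, hence $k \le 4$, and a short check over $k \in \{1,2,3,4\}$ (the excluded $(2),(3),(2,2)$ do not meet $\sum d_i = k+4$ in any case) recovers exactly $(5),(3,3),(4,2),(3,2,2),(2,2,2,2)$. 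Since $\ell = 1$, Proposition \ref{CI}(2) then identifies $\mMw_h(v)^{\C^*}_{(1^2)}$ with the stated disjoint union of nested Hilbert schemes of points $\S{n_1 \ge n_2}$.

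For part (3), the hypothesis $d_1 + \cdots + d_k \ge k+4$ gives $\ell \ge 1$, and I would take a nonempty component $\S{n_1,n_2}_\beta$ of $\mMw_h(v)^{\C^*}_{(1^2)}$, with $L_i = \O(l_i)$ and $\beta = c_1(\O_S(\ell + l_2 - l_1))$. The inequalities preceding Definition \ref{compatible} give $l_1 > l_2$. Condition \eqref{effcurve}, namely $-k-3 + d_1 + \cdots + d_k - \ell + l_1 - l_2 \ge 0$, collapses upon substituting $\ell = -k-3 + d_1 + \cdots + d_k$ to $l_1 - l_2 \ge 0$, which holds since $l_1 > l_2$. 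Equivalently, $\beta^D = K_S - \beta = c_1(\O_S(l_1 - l_2))$ is effective, so the vanishing mechanism of Theorem \ref{reducedclass} exploited in Proposition \ref{CI}(3) never applies when $\cL = \omega_S$.

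Every step is a direct specialization of Proposition \ref{CI}, so I do not expect a real obstacle; the only place requiring a little bookkeeping is the enumeration in part (2), where one must make sure the case analysis on $k$ is exhaustive and matches exactly the five surfaces listed.
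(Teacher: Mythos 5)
Your proposal is correct and is exactly the specialization the authors intend: the corollary is stated with no written proof precisely because it follows from Proposition \ref{CI} once one substitutes $\ell = d_1+\cdots+d_k-k-3$ from adjunction, and your three verifications (including the reduction of condition \eqref{effcurve} to $l_1 - l_2 \ge 0$ and the enumeration of the five types with $\sum d_i = k+4$) fill in the bookkeeping accurately.
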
 \qed


 \begin{cor} \label{nocurves}  If $S$ is isomorphic to one of the five types of very general complete intersections in part (2) of Proposition \ref{CI}, then, 
 \begin{enumerate}[1.]
 \item 
 \begin{align*}&\DT^{\omega_S}_h(v;1)_{(1^2)}=\sum_{\tiny \begin{array}{c}n_1+n_2=-m+b \\ n_2\le n_1\end{array}}\frac{(-1)^{d_1\cdots d_k-1}}{2^{\chi(\O_S(2))}(-\s)^{\chi(\O_S)-1}} \\&\int_{\S{n_1}\times \S{n_2}}
 c_{n_1+n_2}(\sE^{n_1,n_2}) \cup\frac { e(\sT^{\O_S(1)\cdot \t}_{\S{n_1}})\cdot e(\sT^{\O_S(1)\cdot \t}_{\S{n_2}}) \cdot e(\sE^{n_1,n_2}_{\O_S(-1)}\cdot \t^{-2})}{ e(\sE^{n_1,n_2}\cdot \t^{-1})\cdot e(\sE^{n_1,n_2}_{\O_S(-1)}\cdot \t^{-1})},
\end{align*}
\item 
\begin{align*}&\DT^{\omega_S}_h(v)_{(1^2)}=\\&\sum_{\tiny \begin{array}{c}n_1+n_2=-m+b \\ n_2\le n_1\end{array}}\int_{\S{n_1}\times \S{n_2}}
c_{n_1+n_2}(\sE^{n_1,n_2}) \cup \frac{c(T_{\S{n_1}})\cup c(T_{\S{n_2}})}{c(\sE^{n_1,n_2})},
\end{align*} where $b=d_1\cdots d_k\,(g^2+g+1/2)$.
\end{enumerate}
\end{cor}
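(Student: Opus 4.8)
The plan is to read both identities off from Corollary~\ref{movfix} by specializing $\cL=\omega_S$ to the five surfaces in the statement, where Corollary~\ref{cor:CI}(2) forces every type $(1^2)$ component of the $\C^*$-fixed locus to carry the trivial curve class $\beta=0$, and then to convert the integrals over the nested Hilbert schemes into integrals over the products $\S{n_1}\times\S{n_2}$ via Theorem~\ref{product}. As geometric input: for each of the five complete intersection types $(d_1,\dots,d_k)\subset\mathbb{P}^{k+2}$ one has $d_1+\cdots+d_k=k+4$, so by adjunction $\omega_S=\O_S(d_1+\cdots+d_k-k-3)=\O_S(1)$; hence $K_S=h$ and $K_S\cdot h=h^2=\deg S=d_1\cdots d_k$. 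With $v=(2,c_1(\O(2g+1)),m)$ and $\cL=\omega_S$, Corollary~\ref{cor:CI}(2) (equivalently Proposition~\ref{CI}(2) with $\ell=1$, $l_1=g+1$, $l_2=g$) identifies $\mMw_h(v)^{\C^*}_{(1^2)}$ with $\coprod\S{n_1,n_2}$, the disjoint union over $n_1+n_2=-m+b$ with $n_1\ge n_2$ and $b=d_1\cdots d_k\,(g^2+g+\tfrac12)$.

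Next I would plug $\cL=\omega_S$ (so $\D=c_1(\cL)=h=K_S$ and $\kappa=1$) and $\beta=0$ into the two formulas of Corollary~\ref{movfix}. Since the universal divisor in the trivial class is empty, $\sG_{0;M}=0$ in $K$-theory for every $M$ (so $e(\sG_{0;M}\cdot\t^a)=1$ and $c(\sG_{0;M})=1$) and $\i{n_2}_0=\i{n_2}$; using moreover $\omega_S\otimes\cL^{-1}=\O_S$, $\cL^{-1}=\O_S(-1)$ and $\omega_S\otimes\cL^{-2}=\O_S(-1)$, the class $\mathcal{Q}_\T$ collapses to
$$e\big(\sT^{\O_S(1)\cdot\t}_{\S{n_1}}\big)\,e\big(\sT^{\O_S(1)\cdot\t}_{\S{n_2}}\big)\,\frac{e\big(\sK^{n_1,n_2}_{0;\O_S(-1)}\cdot\t^{-2}\big)}{e\big(\sK^{n_1,n_2}_{0;\O_S}\cdot\t^{-1}\big)\,e\big(\sK^{n_1,n_2}_{0;\O_S(-1)}\cdot\t^{-1}\big)},$$
while $\cP_\T$ collapses to $c(T_{\S{n_1}})\cup c(T_{\S{n_2}})/c(\sK^{n_1,n_2}_{0;\O_S})$.

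I would then apply Theorem~\ref{product} with $\beta=0$ and the line bundles $\O_S(1),\O_S(1),\O_S,\O_S(-1)$ with the appropriate $\pm1$ exponents. Its recipe --- insert $c_{n_1+n_2}(\sE^{n_1,n_2})$, keep the $\sT$-classes (which on $\S{n_1,n_2}$ are pulled back from $\S{n_i}$), and replace every $\sK^{n_1,n_2}_{0;M}$ by $\sE^{n_1,n_2}_M$ (using $\iota^*\sE^{n_1,n_2}_M=\sK^{n_1,n_2}_{0;M}$) --- is $\mathbb{Q}[\s,\s^{-1}]$-linear, hence applies to the equivariant class $\mathcal{Q}_\T$ as well (an equivariant Euler class $e(V\cdot\t^a)=\sum_j c_j(V)(a\s)^{\rank V-j}$ being a polynomial in the Chern classes of $V$ with coefficients in $\mathbb{Q}[\s]$, and the denominators being units of the localized ring). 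This turns each $\int_{[\S{n_1,n_2}_0]^{\vir}}(\cdots)$ into $\int_{\S{n_1}\times\S{n_2}}c_{n_1+n_2}(\sE^{n_1,n_2})\cup(\cdots)$. Since $\sT^{\O_S}_{\S{n_i}}=[T_{\S{n_i}}]$ and $\sE^{n_1,n_2}_{\O_S}=\sE^{n_1,n_2}$, this yields formula (2) verbatim and formula (1) up to its numerical prefactor.

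Finally I would pin down that prefactor. In Corollary~\ref{movfix} it equals $(-1)^{-\D\cdot\beta-K_S\cdot\D/2+3\D^2/2-\kappa}/\big(2^{\chi(\cL^2)}(-\s)^{\chi(\cL^2)+\chi(\cL)-\chi(\cL^{-1})-\kappa}\big)$. With $\beta=0$, $\D=h=K_S$, $\kappa=1$ and $\cL=\omega_S=\O_S(1)$: the sign exponent is $-h^2/2+3h^2/2-1=h^2-1=d_1\cdots d_k-1$; the power of $2$ is $2^{\chi(\O_S(2))}$; and Serre duality together with $\omega_S=\O_S(1)$ gives $\chi(\cL^{-1})=\chi(\O_S(-1))=\chi(\omega_S\otimes\O_S(1))=\chi(\O_S(2))$, so the exponent of $-\s$ collapses to $\chi(\O_S(1))-1=\chi(\O_S)-1$ (the last step because $\chi(\O_S(1))-\chi(\O_S)=\tfrac12(h^2-K_S\cdot h)=0$). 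This is exactly the prefactor $(-1)^{d_1\cdots d_k-1}/\big(2^{\chi(\O_S(2))}(-\s)^{\chi(\O_S)-1}\big)$ claimed in (1). The only genuinely nontrivial points are this last bookkeeping of signs and powers of $-\s$, and the mild extension of Theorem~\ref{product} to the equivariant class $\mathcal{Q}_\T$; everything else is a direct substitution into results already established.
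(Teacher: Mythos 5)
Your proposal is correct and follows essentially the same route as the paper, which likewise deduces the corollary by specializing Corollary \ref{movfix} to $\beta=0$ (so $\sG_{0;M}=0$) and converting to integrals over $\S{n_1}\times\S{n_2}$ via Theorem \ref{product}; your careful verification of the prefactor (using $\omega_S=\O_S(1)$, Serre duality, and Riemann--Roch) and your remark on extending Theorem \ref{product} $\mathbb{Q}[\s,\s^{-1}]$-linearly to the equivariant classes simply make explicit what the paper leaves implicit.
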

\begin{proof}
This follows from Corollary \ref{movfix} and Theorem \ref{product} by noting $\sG_{0;M}=0$.

\end{proof}

\section{Mochizuki's result and proof of Theorem \ref{thm5}}\label{Moch}
In this section, we assume $r=2$, $\gamma\cdot h$ is an odd number, and  that  $p_g(S)>0$, for instance, any generic hyperplane section of a quintic 3-fold
satisfies this assumption.
The perfect obstruction theory (see Corollary \ref{int(r)}) $$\left (\dR \hom_{p}(\eE, \eE)_0[1]\right)^{\vee}$$  
gives the virtual cycle $[\mM_h(v)]^{\vir}$
whose virtual dimension $d$ is 
\begin{align*}
d=\gamma^2 -4m -3\chi(\O_S). 
\end{align*}
Let $\sP(\eE)$ be a polynomial 
in the slant products $\ch_i(\eE)/b$ for 
elements $b \in H^{\ast}(S)$ and $i\in \mathbb{Z}_{\ge 0}$. 
By the wall-crossing argument using the master space, 
Mochizuki describes the invariant
\begin{align*}
\int_{[\mM_h(v)]^{\vir}} \sP(\eE)
\end{align*}
in terms of Seiberg-Witten invariants and 
certain integration over the Hilbert schemes of points on $S$. 
The SW invariants are defined as follows: 
for a curve class $\c \in H^2(S,\ZZ)$, let $L$ be the line 
bundle on $S$ with $c_1(L)=\c$, which is uniquely determined (up to isomorphism) by 
the assumption $H^1(\O_S)=0$. 
Let $S_{\c}$ be the Hilbert scheme of curves in class $\c$ or equivalently the moduli space of non-zero
morphisms $\O_S \to L$, that is isomorphic to 
$\mathbb{P}(H^0( L))$. By the proposition in \cite[Section 3]{GSY17a}, $\dR\pi_*\O_{\Z_c}(\Z_c)$ is the virtual tangent bundle of a perfect obstruction theory $S_{\c}\cong \mathbb{P}(H^0( L))$. Under this identification, it is easy to see that the tangent and the obstruction bundles $\mathsf{T}(\c)$ and 
$\ob(\c)$ naturally sit in the exact sequences on $\mathbb{P}(H^0( L))$:
$$0\to H^0(\O_S)\otimes \O\to H^0(L)\otimes \O(1)
\to \mathsf{T}(\c)\to 0,$$
\begin{align*}
0 \to H^1( L) \otimes \O(1) \to \ob(\c) &\to H^2( \O_S) \otimes
\O\to H^2( L) \otimes \O(1) \to 0. 
\end{align*}
By  \cite[Proposition 5.6]{BF97}, the $[S_{\c}]^{\vir}=e(\ob(\c))\cap[S_\c]$. Since by our assumption $p_g>0$ a simple argument (cf.~\cite[Proposition~6.3.1]{M02}) shows that  the only way that $e(\ob(c))\neq 0$ is that $h^1(L)-h^2(L)<0$ in which case, $\rank(\ob(\c))=\rank(T(\c))$, i.e. the virtual dimension of $S_\c$ is $0$. Then, by a simple calculation
\begin{align*}
\mathrm{SW}(\c) := \int_{[S_\c]^{\vir}} 1=(-1)^{h^0(L)-1}\left( \begin{array}{c}
p_g -1 \\
h^0(L)-1
\end{array}  \right). 
\end{align*}


Consider the decomposition\footnote{Since by assumption $\gamma\cdot h$ is an odd number, the equality $\gamma_1\cdot h = \gamma_2 \cdot h$ never occurs.}
\begin{align*}
&\gamma_1+\gamma_2=\gamma, \ \gamma_i \in H^2(S,Z)\cap H^{1,1}(S),\\
& \gamma_1\cdot h < \gamma_2 \cdot h, 
\end{align*} and let $L_{\gamma_i}$ be the line bundle on $S$ with $c_1(L_{\gamma_i})=\gamma_i$, and define $\i{n_i}_{L_{\gamma_i}}:=\i{n_i}\otimes L_{\gamma_i}$.
Recall that we use the symbol $\pi'$ to denote all the projections $$S \times S^{[n_i]}\to \S{n_i},\quad \quad S \times S^{[n_1]} \times S^{[n_2]}\to \S{n_1}\times \S{n_2}.$$


\begin{notn}
Let $\t'$ is the trivial line bundle on $S$ with the $\C^*$-action of weight 1 on the fibers\footnote{Here we use the symbol $\t'$ to distinguish this line bundle from the equivariant trivial line bundle $\t$ defined before with respect to a different $\C^*$-action.}, and let $\s':=c_1(\t')$. We also consider the rank $n$ tautological vector bundle 
on $S^{[n_i]}$, given by 
\begin{align*}
\v{n_i}_{L_{\gamma_i}}:=
\pi'_{\ast} \left(\O_{\z{n_i}} \otimes L_{\gamma_i} \right).
\end{align*}

\end{notn}


Following Mochizuki, we define


\begin{align*} 
&\sA(\gamma_1, \gamma_2, v; \sP) := \\ \notag
& \sum_{\begin{subarray}{c}
n_1+n_2=\\ \gamma^2/2-m-\gamma_1\cdot \gamma_2 
\end{subarray}}
\int_{S^{[n_1]} \times S^{[n_2]}} \mathrm{Res}_{\s'=0}
\left( \frac{e\left(\v{n_1}_{L_{\gamma_1}}\right)\cdot \sP \left(\i{n_1}_{L_{\gamma_1}}\cdot \t'^{-1} \oplus \i{n_2}_{L_{\gamma_2}}\cdot \t' \right)\cdot e\left(\v{n_2}_{L_{\gamma_2}}\cdot \t'^2\right)}{(2s')^{n_1 + n_2 -p_g}\cdot \sQ\left(\i{n_1}_{L_{\gamma_1}}\cdot \t'^{-1}, \i{n_2}_{L_{\gamma_2}}\cdot \t'\right)}\right). 
\end{align*}
where 
\begin{align*}
&\sQ\left(\i{n_1}_{L_{\gamma_1}}\cdot \t'^{-1}, \i{n_2}_{L_{\gamma_2}}\cdot \t'\right)=\\&e\left(
-\dR \hom_{\pi'}\left(\i{n_1}_{L_{\gamma_1}}\cdot \t'^{-1}, \i{n_2}_{L_{\gamma_2}}\cdot \t'\right)
- \dR \hom_{\pi'}\left(\i{n_2}_{L_{\gamma_2}}\cdot \t', \i{n_1}_{L_{\gamma_1}}\cdot \t'^{-1}\right)\right).
\end{align*}

The following result was obtained by Mochizuki: 
\begin{prop}\emph{(Mochizuki~\cite[Theorem~1.4.6]{M02})}\label{thm:Moc}
Assume that $\gamma\cdot h >2K_S \cdot h$ and $\chi(v) :=\int_S v \cdot td_S \ge 1$. 
Then we have the following formula: 
\begin{align*}
\frac{1}{2}\int_{\mM_h(v)}\sP(\eE) =-\sum_{\begin{subarray}{c}
\gamma_1 + \gamma_2 =\gamma \\
\gamma_1\cdot h < \gamma_2\cdot h
\end{subarray}}
\mathrm{SW}(\gamma_1) \cdot 2^{1-\chi(v)} \cdot \sA(\gamma_1, \gamma_2, v;\sP). 
\end{align*}
\end{prop}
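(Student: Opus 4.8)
The plan is to reproduce Mochizuki's wall-crossing argument from \cite{M02}. First I would replace the bare moduli problem by a moduli space of \emph{pairs} $(\cE,\phi)$, where $\cE$ is a torsion-free sheaf on $S$ with $\ch(\cE)=v$ and $\phi$ is a nonzero section of $\cE\otimes L$ for a fixed, sufficiently ample line bundle $L$; such pairs carry a notion of $\delta$-stability governed by a positive rational parameter $\delta$. For $0<\delta\ll 1$ the underlying sheaf of any $\delta$-stable pair is forced to be $h$-stable, and the moduli space $M_{\delta}(v,L)$ is a projective bundle over $\mM_h(v)$; for $\delta\gg 0$ every pair splits, and $M_\delta(v,L)$ is stratified by products indexed by decompositions $\gamma=\gamma_1+\gamma_2$, the strata being built out of Hilbert schemes of points twisted by $L_{\gamma_1},L_{\gamma_2}$. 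Each $M_\delta(v,L)$ carries a perfect obstruction theory (trace-free deformations of $\cE$ together with deformations of $\phi$) and hence a virtual class, and one checks that $\int_{[M_\delta(v,L)]^{\vir}}\sP(\eE)$ at the two extreme chambers computes, up to the normalizing factors $\tfrac12$ and $2^{1-\chi(v)}$, the two sides of the desired identity.

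The second step is the wall-crossing itself. Across a single wall $\delta_0$ I would form the master space $\widetilde{M}$: a proper Deligne--Mumford stack with a $\C^*$-action whose fixed locus is the disjoint union of $M_{\delta_+}(v,L)$, $M_{\delta_-}(v,L)$ and finitely many \emph{intermediate} loci $M_{\gamma_1,\gamma_2}$ parametrizing the strictly $\delta_0$-semistable pairs, which here (since $r=2$) split as a length-$n_1$ twist of $L_{\gamma_1}$ carrying a section, glued to a length-$n_2$ twist of $L_{\gamma_2}$. Equipping $\widetilde{M}$ with a compatible perfect obstruction theory and applying the virtual localization formula of \cite{GP99}, the vanishing of the residue of $\sP(\eE)$ over $\widetilde{M}$ gives
\[
\int_{[M_{\delta_+}]^{\vir}}\sP(\eE)-\int_{[M_{\delta_-}]^{\vir}}\sP(\eE)=\sum_{\gamma_1,\gamma_2}\mathrm{Res}_{\s'=0}\!\int_{[M_{\gamma_1,\gamma_2}]^{\vir}}\frac{\sP(\eE)}{e(N^{\vir})},
\]
where $\s'$ is the parameter of the induced torus acting on the splitting. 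Chaining these identities over all walls between the small and large chamber, the interior contributions telescope and only a sum over decompositions $\gamma=\gamma_1+\gamma_2$ survives.

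Third, I would evaluate each intermediate term. The intermediate locus fibers over $S_{\gamma_1}\times \S{n_1}\times \S{n_2}$; integrating out the rank-one factor of class $\gamma_1$ against its own virtual class and obstruction bundle produces exactly $\mathrm{SW}(\gamma_1)$ as defined in the excerpt (here $p_g(S)>0$ forces $\rank\ob(\gamma_1)=\rank\mathsf T(\gamma_1)$, so only the balanced case contributes), while the remaining factors, after unwinding the virtual normal bundle $N^{\vir}$ via the extension complexes between the two summands, reproduce precisely the integrand defining $\sA(\gamma_1,\gamma_2,v;\sP)$: the tautological bundles $\v{n_i}_{L_{\gamma_i}}$, the denominator $(2\s')^{n_1+n_2-p_g}$, and the factor $\sQ$. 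Keeping track of the sign and of the power of $2$ coming from the degree of the classifying morphism and from the determinant-$\O$ versus trace-free normalizations yields the coefficient $2^{1-\chi(v)}$; the overall $\tfrac12$ on the left is the generic $\C^*$-stabilizer of the pair $\C^*$-action.

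The hard part is twofold. The first difficulty is making the bookkeeping rigorous: constructing the obstruction theories on the pair moduli spaces and on the master space so that they restrict compatibly under the localization isomorphisms, which requires a careful treatment of the relative Atiyah class of the universal pair in the style of \cite{HT10}. The second, and genuinely delicate, point is explaining why the inequality $\gamma\cdot h>2K_S\cdot h$ together with $\chi(v)\ge 1$ is exactly what is needed: it is these hypotheses that prevent walls from separating $\mM_h(v)$ from loci of non-locally-free or unstable sheaves and that exclude the higher-rank strata which would break the rank-$2$ bookkeeping. Establishing these vanishing and boundedness statements is the technical heart of the argument and is precisely what occupies the relevant chapters of \cite{M02}; I would invoke that analysis rather than reprove it.
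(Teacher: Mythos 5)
This proposition is not proved in the paper at all: it is imported verbatim from Mochizuki (Theorem~1.4.6 of \cite{M02}) and the displayed \qed\ closes the statement with no argument, the only ``proof'' being the citation together with the surrounding remarks explaining the factor $\tfrac12$ (the $\mu_2$-gerbe of oriented sheaves) and the fact that the hypotheses can always be arranged by twisting $v$. Your proposal is therefore not comparable to a proof in the paper; it is a reconstruction of Mochizuki's own wall-crossing argument, and since you explicitly defer ``the technical heart'' back to \cite{M02}, your write-up is ultimately on the same footing as the paper's citation rather than an independent proof.

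As a sketch of Mochizuki's strategy it is broadly faithful (Bradlow pairs with a $\delta$-stability parameter, projective bundle over $\mM_h(v)$ in the small-$\delta$ chamber, master spaces at each wall, virtual localization, rank-one wall contributions producing $\mathrm{SW}(\gamma_1)$ and the Hilbert-scheme integrand $\sA$), but two points deserve correction. First, your description of the large-$\delta$ chamber as ``stratified by products indexed by decompositions'' is not how the telescoping works: the point is that beyond the last wall the moduli space of $\delta$-stable pairs is empty, so the sheaf-side integral equals the accumulated sum of wall contributions, one for each decomposition $\gamma_1+\gamma_2=\gamma$ with $\gamma_1\cdot h<\gamma_2\cdot h$; the decomposition sum lives on the walls, not in a terminal chamber. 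Second, your gloss on the hypotheses is off: $\chi(v)\ge 1$ controls the nonvanishing and dimension of $H^0(\cE\otimes L)$ so that the small-$\delta$ moduli space is the expected projective bundle and so that the normalization $2^{1-\chi(v)}$ comes out correctly, while $\gamma\cdot h>2K_S\cdot h$ supplies the $\Ext^2$-type vanishing needed to evaluate the rank-one wall terms; neither is about ``separating $\mM_h(v)$ from loci of non-locally-free sheaves'' nor about excluding higher-rank strata (there are none, since $r=2$ is fixed throughout). If you intend to present this as a proof rather than a pointer to \cite{M02}, these are the places where the bookkeeping would actually have to be carried out.
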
\qed
\begin{rmk}
The factor $1/2$ in the left hand side of the formula above
comes from the difference between Mochizuki's convention and ours. 
Mochizuki used the moduli stack of oriented stable sheaves, 
which is a $\mu_2$-gerb over our moduli space $\mM_h(v)$. 
\end{rmk}
\begin{rmk} \label{replacev} 
The assumptions $\gamma\cdot h >2K_S \cdot h$ and $\chi(v) \ge 1$ 
are satisfied if we replace $v$ by $v \cdot L_{k h}$ for $k\gg 0$. Note that tensoring with a bundle does not affect the isomorphism class of $\mMb_h(v)$, and hence in particular the DT invariants $\DT^{\cL}_h(v;\alpha), \DT^{\cL}_h(v)$ remain unchanged.  
\end{rmk}

Recall that the $\mathbb{C}^{\ast}$-fixed locus 
$\mMb_h(v)^{\C^*}$
decomposes into components
\begin{align*}
\mMb_h(v)^{\C^*}=
\mMb_h(v)^{\C^*}_{(2)} \coprod \mMb_h(v)^{\C^*}_{(1^2)}, 
\end{align*} and by Proposition \ref{virdec}, 
$$\DT_h(v;\alpha)=\DT_h(v;\alpha)_{(2)}+\DT_h(v;\alpha)_{(1^2)},\quad \DT_h(v)=\DT_h(v)_{(2)}+\DT_h(v)_{(1^2)}.$$
Recall form Corollary \ref{int(r)}

\begin{align*}
\DT^{\cL}_h(v;\alpha)_{(2)}&=\int_{[\mM_h(v)]^{\vir}} \s^\kappa \cdot e\left(-\dR \hom_{p}(\eE, \eE\otimes \cL\cdot t)\right)\cup \alpha,\\
\DT^{\cL}_h(v)_{(2)}&=\int_{[\mM_h(v)]^{\vir}}c_d\left(-\dR \hom_{p}(\eE, \eE)_0\right),
\end{align*}

Suppose that the class $\alpha$ can be written as a polynomial in $\ch_i(\eE)/b$ for $b \in H^{\ast}(S)$. Both $e\left(-\dR \hom_{p}(\eE, \eE\otimes \cL\cdot t)\right)$ and  $c_d\left(-\dR \hom_{p}(\eE, \eE)_0\right)$
can be expanded as polynomials $\sP_{1}$ and $\sP_{2}$ in  slant products
$\ch_i(\eE)/b$ for $b \in H^{\ast}(S)$ and $\s$ by the application Grothendieck-Riemann-Roch theorem and K\"unneth formula (see for example \cite[Prop 2.1]{GK17} for a detailed calculation). We can thus apply  Proposition~\ref{thm:Moc}
to write $\DT_h(v)_{(2)}$
in terms of 
SW invariants and the integration over the Hilbert schemes of points. Therefore, by Corollaries \ref{movfix} and \ref{nocurves} we have

\begin{prop} \label{DT-nested}
Under the assumption of Proposition~\ref{thm:Moc}, we have
the identity
\begin{align*}
\DT^{\cL}_h(v;\alpha)
&=
-\sum_{\begin{subarray}{c}
\gamma_1 + \gamma_2 =\gamma \\
\gamma_1\cdot h < \gamma_2 \cdot h
\end{subarray}} 
\mathrm{SW}(\gamma_1) \cdot 2^{2-\chi(v)} \cdot \sA(\gamma_1, \gamma_2, v; \sP_{1}\cup \alpha)\\&+
\sum_{\begin{subarray}{c}\T \cong \S{n_1,n_2}_\beta \\\text{is a conn. comp. of}\\  \mMb_h(v)_{(1^2)}^{\C^*} \end{subarray}}
\frac{(-1)^{-\D\cdot \beta-K_S\cdot \D/2+3\D^2/2-\kappa}}{2^{\chi(\cL^2)}(-\s)^{\chi(\cL^2)+\chi(\cL)-\chi(\cL^{-1})-\kappa}}\int_{[\S{n_1,n_2}_\beta]^{\vir}}
\alpha \cup \mathcal{Q_\T}. \end{align*}

\begin{align*}
\DT^{\cL}_h(v)
=
-\sum_{\begin{subarray}{c}
\gamma_1 + \gamma_2 =\gamma \\
\gamma_1\cdot h < \gamma_2 \cdot h
\end{subarray}} 
\mathrm{SW}(\gamma_1) \cdot 2^{2-\chi(v)} \cdot \sA(\gamma_1, \gamma_2, v; \sP_{2})+
\!\!\!\sum_{\begin{subarray}{c}\T \cong \S{n_1,n_2}_\beta \\\text{is a conn. comp. of}\\  \mMb_h(v)_{(1^2)}^{\C^*} \end{subarray}}\!\!\!\!\!\!\sN_S(n_1,n_2,\beta;\cP_\T).\end{align*}

In particular, when $\cL=\omega_S$ and  $S$ is isomorphic to one of five types very general complete intersections $(5)\subset \mathbb{P}^3, \; (3,3)\subset \mathbb{P}^4,\; (4,2)\subset \mathbb{P}^4, \; (3,2,2)\subset \mathbb{P}^5,\; (2,2,2,2)\subset \mathbb{P}^6,$ then, 
 \begin{align*}&\DT^{\omega_S}_h(v;1)
=
-\sum_{\begin{subarray}{c}
\gamma_1 + \gamma_2 =\gamma \\
\gamma_1\cdot h < \gamma_2 \cdot h
\end{subarray}}
\mathrm{SW}(\gamma_1) \cdot 2^{2-\chi(v)} \cdot \sA(\gamma_1, \gamma_2, v; \sP_{1}\cup \alpha)
+\frac{(-1)^{d_1\cdots d_k-1}}{2^{\chi(\O_S(2))}(-\s)^{\chi(\O_S)-1}} \\&\sum_{\tiny \begin{array}{c}n_1+n_2=-m+b \\ n_2\le n_1\end{array}}\int_{\S{n_1}\times \S{n_2}}
\frac { c_{n_1+n_2}(\sE^{n_1,n_2}) \cup e(\sT^{\O_S(1)\cdot \t}_{\S{n_1}})\cdot e(\sT^{\O_S(1)\cdot \t}_{\S{n_2}}) \cdot e(\sE^{n_1,n_2}_{\O_S(-1)}\cdot \t^{-2})}{ e(\sE^{n_1,n_2}\cdot \t^{-1})\cdot e(\sE^{n_1,n_2}_{\O_S(-1)}\cdot \t^{-1})},
\end{align*}
\begin{align*}
\DT^{\omega_S}_h(v)
=&
-\sum_{\begin{subarray}{c}
\gamma_1 + \gamma_2 =\gamma \\
\gamma_1\cdot h < \gamma_2 \cdot h
\end{subarray}}
\mathrm{SW}(\gamma_1) \cdot 2^{2-\chi(v)} \cdot \sA(\gamma_1, \gamma_2, v; \sP_{2})\\&
+\sum_{\begin{subarray}{c}n_1+n_2=-m+b \\ n_2\le n_1\end{subarray}}\int_{\S{n_1}\times \S{n_2}}
\frac{c_{n_1+n_2}(\sE^{n_1,n_2}) \cup c(T_{\S{n_1}})\cup c(T_{\S{n_2}})}{c(\sE^{n_1,n_2})},
\end{align*} where $\gamma=c_1(\O_S(2g+1))$ and $b=d_1\cdots d_k\, (g^2+g+1/2)$. Finally, when $S$ is a K3 surface and $\cL=\omega_S=\O_S$ then in the above formulas  for $\DT^{\omega_S}_h(v;1)$ and $\DT^{\omega_S}_h(v)$ only the first summations involving $\sA(-)$ will contribute.
\end{prop}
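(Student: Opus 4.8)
The plan is to combine the decomposition of the reduced virtual class over $\mMb_h(v)^{\C^*}$ into its type $(2)$ and type $(1^2)$ pieces (Proposition \ref{virdec}) with Mochizuki's formula (Proposition \ref{thm:Moc}) for the first piece and Corollary \ref{movfix} for the second. By Proposition \ref{virdec},
\begin{align*}
\DT^{\cL}_h(v;\alpha)&=\DT^{\cL}_h(v;\alpha)_{(2)}+\DT^{\cL}_h(v;\alpha)_{(1^2)},\\
\DT^{\cL}_h(v)&=\DT^{\cL}_h(v)_{(2)}+\DT^{\cL}_h(v)_{(1^2)},
\end{align*}
and Corollary \ref{movfix} already identifies $\DT^{\cL}_h(v;\alpha)_{(1^2)}$ and $\DT^{\cL}_h(v)_{(1^2)}$ with the second (nested Hilbert scheme) sums appearing in the two claimed identities, so no further argument is needed for them.

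It remains to handle the type $(2)$ part, which by Corollary \ref{int(r)} equals
\begin{align*}
\DT^{\cL}_h(v;\alpha)_{(2)}&=\int_{[\mM_h(v)]^{\vir}_0}\s^{\kappa}\,e\left(-\dR\hom_p(\eE,\eE\otimes\cL\cdot\t)\right)\cup\alpha,\\
\DT^{\cL}_h(v)_{(2)}&=\int_{[\mM_h(v)]^{\vir}_0}c_d\left(-\dR\hom_p(\eE,\eE)_0\right).
\end{align*}
First I would use Remark \ref{replacev} to replace $v$ by $v\cdot L_{kh}$ with $k\gg0$, so that the hypotheses $\gamma\cdot h>2K_S\cdot h$ and $\chi(v)\ge1$ of Proposition \ref{thm:Moc} hold without changing any of the invariants involved. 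The essential point is then that both integrands above are the restrictions to $\mM_h(v)$ of universal (i.e.\ $S$-independent) polynomials $\sP_1$ and $\sP_2$ in the slant products $\ch_i(\eE)/b$, $b\in H^{\ast}(S)$, and in the equivariant parameter $\s$: applying Grothendieck-Riemann-Roch to $\dR\hom_p$ expresses $\ch(\dR\hom_p(\eE,\eE\otimes\cL\cdot\t))$ and $\ch(\dR\hom_p(\eE,\eE)_0)$ through the K\"unneth components of $\ch(\eE)$ paired against classes pulled back from $S$, together with $\ch(\cL)$, the Todd class of $S$, and $\s$, so that their equivariant Euler class and top Chern class have the required slant-product form. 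This is the routine but lengthy bookkeeping carried out, for a closely related integrand, in \cite[Proposition 2.1]{GK17}. Since $\alpha$ is assumed to be a polynomial in the slant products, so is $\sP_1\cup\alpha$. Applying Proposition \ref{thm:Moc} with $\sP=\sP_1\cup\alpha$ (resp.\ $\sP=\sP_2$) and clearing the factor $1/2$ on its left-hand side --- which converts $2^{1-\chi(v)}$ into $2^{2-\chi(v)}$ --- produces precisely the Seiberg-Witten sums in the statement; combined with the $(1^2)$ contributions above, this yields the two general formulas.

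For the complete-intersection specialization, set $\cL=\omega_S$ with $S$ one of the five listed types, so $\omega_S=\O_S(1)$, i.e.\ $\ell=1$. Then Proposition \ref{CI}, item (2) (equivalently Corollary \ref{cor:CI}, item (2)) shows that every type II component is a nested Hilbert scheme of points $\S{n_1,n_2}$, so $\beta=0$ throughout; consequently $\sG_{0;M}=0$, and the second sums are evaluated by Corollary \ref{nocurves} (which uses Theorem \ref{product} to rewrite them as integrals over $\S{n_1}\times\S{n_2}$). Substituting these values into the general formulas gives the asserted identities for $\DT^{\omega_S}_h(v;1)$ and $\DT^{\omega_S}_h(v)$. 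Finally, for a $K3$ surface with $\cL=\omega_S=\O_S$ one has $\ell=0$, so Corollary \ref{cor:CI}, item (1) forces $\mMw_h(v)^{\C^*}_{(1^2)}=\emptyset$ and only the $\sA$-terms survive. The main obstacle will be the Grothendieck-Riemann-Roch / K\"unneth step: one must verify that nothing beyond the slant products $\ch_i(\eE)/b$ and powers of $\s$ enters $e(-\dR\hom_p(\eE,\eE\otimes\cL\cdot\t))$ or $c_d(-\dR\hom_p(\eE,\eE)_0)$, and track the equivariant weights (the $\t^{\pm1}$ twists) and the trace-free correction with enough care that the resulting $\sP_1,\sP_2$ are genuinely universal and feed correctly into Mochizuki's theorem.
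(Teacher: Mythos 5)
Your proposal is correct and follows essentially the same route as the paper: decompose via Proposition \ref{virdec}, feed the type $(2)$ contribution from Corollary \ref{int(r)} into Mochizuki's formula after expanding the integrands as universal slant-product polynomials $\sP_1,\sP_2$ via Grothendieck--Riemann--Roch and K\"unneth (citing \cite[Prop 2.1]{GK17}), account for the factor $1/2$ to get $2^{2-\chi(v)}$, and handle the type $(1^2)$ contribution and the complete-intersection/K3 specializations with Corollaries \ref{movfix}, \ref{nocurves} and \ref{cor:CI}. The only cosmetic difference is your invocation of Remark \ref{replacev}, which is unnecessary here since the proposition already assumes the hypotheses of Proposition \ref{thm:Moc}.
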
 \qed

\noindent {\tt{amingh@math.umd.edu}},\quad 
\noindent {\tt{University of Maryland}} \\
\noindent {\tt{College Park, MD 20742-4015, USA}} \\\\
\noindent{\tt{artan@cmsa.fas.harvard.edu, Center for Mathematical Sciences and\\ Applications, Harvard University, Department of Mathematics, 20 Garden Street, Room 207, Cambridge, MA, 02139}}\\\\
\noindent{\tt{Centre for Quantum Geometry of Moduli Spaces, Aarhus University, Department of Mathematics,
Ny Munkegade 118, building 1530, 319, 8000 Aarhus C, Denmark}}\\\\
\noindent{\tt{National Research University Higher School of Economics, Russian Federation, Laboratory of Mirror Symmetry, NRU HSE, 6 Usacheva str., Moscow, Russia, 119048}}\\\\
\noindent{\tt{yau@math.harvard.edu}}
\noindent{\tt{Department of Mathematics, Harvard University, Cam- bridge, MA 02138, USA }}
\end{document}